\newcounter{hours}\newcounter{minutes}
\DeclareSymbolFont{rsfscript}{OMS}{rsfs}{m}{n}
\DeclareSymbolFontAlphabet{\mathrsfs}{rsfscript}
\newcommand{\Cl}[1]{\ensuremath{\mathcal{#1}}}
\newcommand{\unif}[1]{\ensuremath{\mathrsfs{#1}}}
\newcommand{\pv}[1]{\ensuremath{\mathsf{#1}}}
\newcommand{\compl}[2]{\ensuremath{C_{\pv{#1}}(#2)}}
\newcommand{\apex}{\ensuremath{\mathscr J}}
\newcommand\op{\llbracket}
\newcommand\cl{\rrbracket}
\newcommand{\End}[1]{\ensuremath{\mathrm{End}(#1)}}
\newcommand{\Aut}[1]{\ensuremath{\mathrm{Aut}(#1)}}
\newcommand\omt[2]{\ensuremath{\Omega_{#1}^\tau{\pv{#2}}}}
\newcommand\omc[2]{\ensuremath{\Omega_{#1}^\kappa{\pv{#2}}}}
\newcommand\clos[2]{\ensuremath{\mathrm{cl}_{\pv{#1}}(#2)}}
\newcommand\om[2]{\ensuremath{\Omega_{#1}\pv{#2}}}
\newcommand\Om[2]{\ensuremath{\overline\Omega_{#1}\pv{#2}}}
\newcommand{\shi}[1]{\ensuremath{{{#1}}^{\Z}}}
\newcommand{\ci}[1]{\ensuremath{{\mathcal {#1}}}}
\newcommand\HH{\ensuremath{\pv {\overline{H}}}}
\newcommand\GG{\ensuremath{\pv {\overline{G}}}}
\newcommand\z[1]{\ensuremath{#1^\mathbb{Z}}}
\newcommand\malcev{\mathop{\raise1pt\hbox{\footnotesize$\bigcirc$%
\kern-7pt\raise1pt\hbox{\tiny$m$}\kern1pt}}}
\newcommand\ov{\overline}
\newcommand\B{\mathcal{B}}
\newcommand\J{\mathcal{J}}
\begin{document}

\title{Profinite topologies}

\author{Jorge Almeida$^{1,}$\thanks{
    Work partially supported by CMUP (UID/MAT/00144/2013), 
    which is funded by FCT (Portugal) with national (MCTES) and European 
    structural funds (FEDER), under the partnership 
    agreement PT2020.}%
  \ and Alfredo Costa$^{2,}$\thanks{
    Work partially supported by the Centre for Mathematics of the
    University of Coimbra -- UID/MAT/00324/2013, funded by the Portuguese
    Government through FCT/MCTES and co-funded by the European Regional 
    Development Fund through the Partnership Agreement PT2020.} }

\markboth{J. Almeida, A. Costa}{Profinite topologies}

\address{ $^1$CMUP, Fac. Ci\^encias, Univ.
  Porto, Rua do Campo Alegre 687, 4169-007 Porto, Portugal\\
  email:\,\url{jalmeida@fc.up.pt}\\[4pt]
  $^2$CMUC, University of Coimbra, Apartado 3008, 3001-501 Coimbra,
  Portugal\\
  email:\,\url{amgc@mat.uc.pt}
}


\maketitle\label{chapter17}


\begin{abstract}
  Profinite semigroups are a generalization of finite semigroups that
  come about naturally when one is interested in considering free
  structures with respect to classes of finite semigroups. They also
  appear naturally through dualization of Boolean algebras of regular
  languages. The additional structure is given by a compact
  zero-dimensional topology. Profinite topologies may also be
  considered on arbitrary abstract semigroups by taking the initial
  topology for homomorphisms into finite semigroups. This text is the
  proposed chapter of the Handdbook of Automata Theory
  dedicated to these topics. The general theory is formulated in the
  setting of universal algebra because it is mostly independent of
  specific properties of semigroups and more general algebras
  naturally appear in this context. In the case of semigroups,
  particular attention is devoted to solvability of systems of
  equations with respect to a pseudovariety, which is relevant for
  solving membership problems for pseudovarieties. Focus is also given
  to relatively free profinite semigroups per se,
  specially ``large'' ones,
  stressing connections with symbolic dynamics that bring light to their structure.
\end{abstract}

\begin{classification}
  08A70, 08B20, 20M05, 20M07, 20M35, 20E18, 68Q70
\end{classification}

\begin{keywords}
  Profinite algebra, profinite semigroup, profinite topology,
  profinite metric, profinite uniformity, tameness, symbolic dynamics
\end{keywords}

\localtableofcontents

%
%

\section{Introduction}
\label{sec:ALCO-intro}

Profinite semigroups and profinite topologies in semigroups have
become an important tool in the theory of finite automata. There are
many reasons for this fact.

First, since finite automata describe transition finite semigroups (of
transformations or relations) by giving the action of their generators
on a finite set of states, the separation power of words by a class
\Cl C of finite automata translates in algebraic terms to the
separation power of homomorphisms from free semigroups into the
corresponding transition semigroups. More generally, the homomorphisms
from a semigroup $S$ into such semigroups determine an initial
topology on~$S$, namely the corresponding profinite topology. The
topological separation axiom of Hausdorff is the familiar algebraic
property of being residually in \Cl C. But, actually, homomorphisms
into finite semigroups give a finer structure, namely a uniform
structure, or even a metric structure in case the semigroup $S$ is
finitely generated. Thus, there is a natural completion associated
with our separation scheme, which is called the pro-\Cl C completion
of $S$. The topological semigroups thus obtained are so-called
\emph{pro-\Cl C semigroups}. For the class \Cl C of all finite
semigroups, the attribute ``pro-\Cl C'' becomes simply ``profinite''.

Another explanation for the importance of profinite topologies comes
from duality. Applying the above recipe to the free semigroup $A^+$,
with \Cl C a pseudovariety \pv V of finite semigroups, the resulting
pro-\pv V semigroup is known as the free pro-\pv V semigroup, for
indeed it has the expected universal property. It turns out that the
topological structure of this free pro-\pv V semigroup is precisely
the Stone dual of the Boolean algebra of regular languages over the
alphabet $A$ that can be recognized by members of~\pv V. The further
dualization of residual operations determines the
multiplication~\cite{Gehrke&Grigorieff&Pin:2008,Gehrke&Grigorieff&Pin:2010}.

Another fundamental reason why free profinite semigroups are important
is that their elements, sometimes called \emph{pseudowords}%
\index{pseudoword}, %
play the role of terms in classical universal algebra. Indeed,
pseudovarieties can be defined by formal equalities between
pseudowords.

To be able to apply these connections with the profinite world, some
knowledge of the structure of free pro-\pv V semigroups is usually
necessary for suitable pseudovarieties \pv V. The thus motivated
structural investigation of these semigroups is in general quite hard
and has only been carried out in a very limited number of cases.

Another major difficulty lies in the fact that in most interesting
cases, free pro-\pv V semigroups are uncountable. Thus, there are delicate
questions when trying to obtain decidability results using
pseudowords. An important idea in this context is to replace arbitrary
pseudowords by those of a special kind, namely the elements of the
subalgebra with respect to a suitably enriched language. This leads to
the notions of reducibility and tameness which are involved in some of
the deepest results using profinite methods.

The aim of this chapter is to efficiently introduce these topics,
illustrating with examples and results the wide range of application
of profinite methods.
We introduce profinite topologies in the context of general algebraic
structures. Although they were originally considered in this context
by Birkhoff~\cite{Birkhoff:1937}, so far they have not been much
studied outside the realm of group and semigroup theories. In the
context of ring theory, there is an analog topology, which may or may
not be profinite, and which is known as the \emph{Krull topology}. It
is determined on a ring by a filtration by ideals. For instance, for
the ring of $p$-adic integers, the filtration consists of the ideals
generated by the powers of the prime~$p$ and the topology is
``profinite'' in the sense that the quotient rings
$\mathbb{Z}/p^n\mathbb{Z}$ are finite.

Since most of the theory is independent of the concrete algebraic
structures in which one may be interested, and a lot of attention has
been given to general algebraic structures as recognizing devices for
tree languages (see Chapter~22), it seems
worthwhile to formulate the theory in the more general context.
Moreover, the reducibility and tameness properties involve themselves
general algebraic structures, even when semigroups are the aim of the
investigations. In Section~\ref{sec:ALCO-profinite-top-algebras},
results are formulated in the context of general algebras.
Section~\ref{sec:ALCO-sgps} deals with applications in the special
case of semigroups. Section~\ref{sec:ALCO-structure} introduces recent
results concerning the structure of free profinite semigroups over
large pseudovarieties, where connections with symbolic dynamics play
an important role.

\section{Profinite topologies for general algebras}
\label{sec:ALCO-profinite-top-algebras}

This section introduces profinite topologies for general (topological
abstract) algebras. The treatment presented here is meant to be a
quick guide to the main general results in this area. For most proofs,
the reader is referred to the bibliography. Occasionally, simple
generalizations of the previously published results are presented here
for we believe this contributes to understanding the theory, and may
be helpful in applications.

\subsection{General algebraic structures}
\label{sec:ALCO-general-algebras}

This subsection introduces the basics of Universal Algebra. The reader
is referred to \cite{Burris&Sankappanavar:1981} for further details.

By an \emph{algebraic signature}%
\index{algebraic signature}%
\index{signature!algebraic} %
we mean a set $\sigma$, of \emph{operation symbols}, together with an
\emph{arity} function $\nu:\sigma\to\mathbb{N}$ into the set of
non-negative integers. We denote $\nu^{-1}(n)$ by $\sigma_n$. A
\emph{$\sigma$-algebra}%
\index{$\sigma$-algebra}%
\index{algebra!$\sigma$--} %
consists of a nonempty set $S$ together with an interpretation
function assigning to each operation symbol $f\in\sigma$ a
$\nu(f)$-ary operation $f^S:S^{\nu(f)}\to S$. The operations on $S$ of
this form are called the \emph{basic operations}. Usually, the
interpretation function is understood and we talk about the
\emph{algebra}~$S$. Moreover, unless explicit mention of the signature
$\sigma$ is required, which is usually understood from the context, we
will omit reference to it. An algebra $S$ is \emph{trivial}%
\index{algebra!trivial} %
if $S$ is a singleton.

From hereon, whenever we talk about algebras and their classes, unless
otherwise stated, we always assume that the same signature is
involved.

A \emph{homomorphism}%
\index{algebra!homomorphism} %
is a mapping $\varphi:S\to T$ between two algebras such that, for
every arity $n$ and $f\in\sigma_n$, and for all $s_1,\ldots,s_n\in S$,
the equality $\varphi\bigl(f^S(s_1,\ldots,s_n)\bigr)
=f^T\bigl(\varphi(s_1),\ldots,\varphi(s_n)\bigr)$ holds.
For an algebra $T$, a nonempty subset $S$ closed under the
interpretation in~$T$ of the operation symbols is an algebra under the
induced operations; we then say that $S$ is a \emph{subalgebra}%
\index{subalgebra}%
\index{algebra!sub--} %
of~$T$. For a family $(S_i)_{i\in I}$ of algebras, their \emph{direct
  product}\index{direct product}%
\index{product!direct} %
$\prod_{i\in I}S_i$ is the Cartesian product with operation symbols
interpreted component-wise. Note that, if $I=\emptyset$, then
$\prod_{i\in I}S_i$ is a trivial algebra.

A \emph{congruence}%
\index{congruence}%
\index{algebra!congruence} %
on an algebra $S$ is an equivalence relation $\theta$ on~$S$ such that
$\theta$ is a subalgebra of~$S\times S$. For a congruence $\theta$
on~$S$, we may interpret each operation symbol $f\in\sigma_n$, on the
quotient set $S/\theta$ by putting
$f^{S/\theta}(s_1/\theta,\ldots,s_n/\theta)
=f^{S}(s_1,\ldots,s_n)/\theta$, whenever $s_1,\ldots,s_n\in S$, where
$s/\theta$ denotes the $\theta$-class of~$s$; this is called the
\emph{quotient algebra}%
\index{algebra!quotient} %
of~$S$ by~$\theta$. The chosen structure of~$S/\theta$ is the unique
way of defining the quotient algebra so that the natural mapping $S\to
S/\theta$, which sends $s$ to $s/\theta$, is a homomorphism.

Given an algebra $S$ and a nonempty family $(\theta_i)_{i\in I}$ of
congruences on~$S$, there is a natural injective homomorphism
$S/(\bigcap_{i\in I}\theta_i)\to\prod_{i\in I}S/\theta_i$. For a class
of algebras \Cl C containing trivial algebras, and an algebra $S$, we
denote by $\theta_\Cl C$ the intersection of the family of all
congruences $\theta$ on~$S$ such that $S/\theta\in\Cl C$. We say that
$S$~is \emph{residually in~\Cl C}%
\index{algebra!residually in} %
if $\theta_\Cl C$ is the equality relation $\Delta_S$ on the set~$S$.

A \emph{variety}%
\index{variety}%
\index{algebras!variety of --} %
is a class \Cl V of algebras which is closed under taking
homomorphic images, subalgebras and arbitrary direct products. Since
the intersection of a nonempty family of varieties is again a variety,
we may consider the \emph{variety generated by}%
\index{variety!generated by} %
any class \Cl C of algebras, denoted $\Cl V(\Cl C)$. By a well-known
theorem of Birkhoff~\cite{Birkhoff:1935}, for every nonempty set $A$
and every variety~\Cl V, there is a \emph{\Cl V-free algebra on~$A$}%
\index{algebra!free --}%
\index{algebra!relatively free --}, %
that is an algebra $F_A\Cl V$%
\index{$F_A\Cl V$} %
together with a mapping $\iota:A\to F_A\Cl V$ such that, for every
mapping $\varphi:A\to S$ into an algebra $S$ from~\Cl V, there is a
unique homomorphism $\hat\varphi:F_A\Cl V\to S$ such that
$\hat\varphi\circ\iota=\varphi$. By the usual `abstract nonsense',
such an algebra is unique up to isomorphism and depends only on the
variety \Cl V and the cardinality of the set~$A$. In case $A$~is a
finite set of cardinality $n$, we may write $F_n\Cl V$%
\index{$F_n\Cl V$} %
instead of $F_A\Cl V$. A similar convention applies for other
notations for free algebras that are used in this chapter.

In particular, the class of all $\sigma$-algebras is a variety. The
corresponding free algebra on~$A$ is the algebra $T_A^{(\sigma)}$%
\index{$T_A^{(\sigma)}$} %
of formal \emph{($\sigma$-)terms}%
\index{algebra!term}%
\index{term}, %
constructed recursively from the elements of~$A$ by formally applying
the operation symbols, which also defines their interpretation:
\begin{itemize}
\item for each $a\in A$, we have $a\in T_A$;
\item if each $t_1,\ldots,t_n$ is in~$T_A$ and $f\in\sigma_n$, then
  $f(t_1,\ldots,t_n)$ is also in~$T_A$;
\item all elements of $T_A$ are obtained by applying the preceding
  rules.
\end{itemize}
In fact the algebra $F_A\Cl V$ is naturally constructed as the
quotient algebra $T_A/\theta_\Cl V$.

For a variety \Cl V, each element $w$ of~$F_A\Cl V$ determines a
function $w_S:S^A\to S$ on each algebra $S$ from~\Cl V by letting
$w_S(\varphi)=\hat\varphi(w)$ for a function $\varphi:A\to S$.
In case
$A=\{a_1,\ldots,a_n\}$, one may prefer to view $w_S$ as a function
from $S^n$ to~$S$, by putting $w_S(s_1,\ldots,s_n)=\hat\varphi(w)$,
where $\varphi:A\to S$ maps $a_i$ to $s_i$
($i=1,\ldots,n$).

An \emph{identity}%
\index{identity} %
is a formal equality $u=v$ with $u,v\in T_A$ for some set~$A$. We say
that an algebra $S$ satisfies the identity $u=v$ if $u_S=v_S$. For a
set $\Sigma$ of identities, the class $[\Sigma]$%
\index{$[\Sigma]$} %
consisting of all algebras that satisfy all the identities
from~$\Sigma$ is easily seen to be a variety. Birkhoff's variety
theorem \cite{Birkhoff:1935} states that every variety is of this
form.

A \emph{pseudovariety}%
\index{pseudovariety}%
\index{algebras!pseudovariety of --} %
is a nonempty class \pv V of finite algebras that is closed under
taking homomorphic images, subalgebras and finite direct products. The
\emph{pseudovariety generated by}%
\index{pseudovariety!generated by}%
\index{algebras!pseudovariety of -- generated by} %
a class \Cl C of finite algebras, denoted $\pv V(\Cl
C)$, is the intersection of all pseudovarieties that contain~\Cl C. A
class of finite algebras closed under taking isomorphic algebras,
subalgebras, and finite direct products is called a
\emph{pseudoquasivariety}%
\index{pseudoquasivariety}%
\index{algebras!pseudoquasivariety of --}.

\begin{example}\label{eg:ALCO-algebras}
(1)  For the signature consisting of a single binary operation, the class
  \Cl S of all semigroups is a variety, defined by the identity
  $x(yz)=(xy)z$. Its free algebra $F_A\Cl S$ is the semigroup of words
  $A^+$. The class \pv S of all finite semigroups is a pseudovariety.
  The classes \Cl M, of all monoids, and \Cl G, of all groups, are not
  varieties: they are not closed under taking subalgebras. The class
  \pv G of all finite groups is a pseudovariety, but the class \pv M
  of all finite monoids is not a pseudovariety.

(2)  For the signature consisting of a
  binary and a nullary operation (or \emph{constant}), the class \Cl
  M is a variety and the class \pv M is a pseudovariety.

(3)  For the signature consisting of a
  binary operation, a unary operation and a nullary operation, the
  class \Cl G of all groups is a variety.

  (4) For the signature of~(1), consider the class of all finite
  semigroups such that, if an element $s$ generates a subsemigroup
  whose subgroups are trivial, then $s^2=s$. This is a
  pseudoquasivariety but not a pseudovariety (for instance, the
  3-element semigroup with presentation $\langle a: a^4=a^2\rangle$
  belongs to the class but its quotient $\langle a: a^3=a^2\rangle$
  does not).
\end{example}

Given an algebra $S$ and a subset $L$ of~$S$, the \emph{syntactic
  congruence}%
\index{congruence!syntactic}%
\index{algebra!syntactic congruence} %
of~$L$ on~$S$ is the largest congruence $\sim_L$ such that $L$ is a
union of $\sim_L$-classes. It is characterized by the following
property: for $s,s'\in S$, the relation $s\mathrel{\sim_L}s'$ holds
if and only if, for all $n\ge1$, $t\in T_n$, and $s_2,\ldots,s_n\in
S$, we have $t_S(s,s_2,\ldots,s_n)\in L$ if and only if
$t_S(s',s_2,\ldots,s_n)\in L$. For some varieties, such as of
semigroups, monoids, groups, or rings, and for any finitely generated
variety of lattices, it turns out that, rather than considering all
terms in the preceding equivalence, it suffices to consider a finite
number of them. For instance, for the variety of monoids, it suffices
to consider the single term $t=(xy)z$, as in the usual definition of
the syntactic congruence for monoids. See Clark \textit{et
  al.}~\cite{Clark&Davey&Freese&Jackson:2004} for alternative
characterizations of varieties with such a finiteness property.

For an algebra $S$, we say that a subset $L$ of~$S$ is
\emph{recognized}%
\index{subset!recognized by homomorphism} %
by a homomorphism $\varphi:S\to T$ if $L=\varphi^{-1}\varphi L$. In
other words, $L$~is a union of classes of the \emph{kernel}%
\index{congruence!kernel}%
\index{kernel} %
congruence $\ker\varphi=(\varphi\times\varphi)^{-1}\Delta_S$ or,
equivalently, $\ker\varphi$ is contained in $\sim_L$. For a class
\Cl C of algebras, we say that a subset $L$ of~$S$ is \emph{\Cl
  C-recognizable}%
\index{subset!recognizable} %
if $L$~is recognized by a homomorphism $\varphi:S\to T$ into some
algebra $T$ from~\Cl C. In particular $L$ is recognizable by some
finite algebra if and only if $\sim_L$~has finite index, in which
case we also say simply that $L$~is \emph{recognizable}.

\subsection{Pseudometric and uniform spaces}
\label{sec:ALCO-unif-spaces}

A \emph{pseudometric}%
\index{pseudometric} %
on a set $X$ is a function $d$ from $X\times X$ to the non-negative
reals such that the following conditions hold:
\begin{conditionsiii}
\item $d(x,x)=0$ for every $x\in X$;
\item $d(x,y)=d(y,x)$ for all $x,y\in X$;
\item (triangle inequality) $d(x,z)\le d(x,y)+d(y,z)$ for all
  $x,y,z\in X$.
\end{conditionsiii}
In case, additionally, $d(x,y)=0$ implies $x=y$, then we say that
$d$~is a \emph{metric}%
\index{metric} %
on~$X$. If, instead of the triangle inequality, we impose the stronger
\begin{conditionsiii}
\item[(iv)] (ultrametric inequality) $d(x,z)\le\max\{d(x,y),d(y,z)\}$
  for all $x,y,z\in X$,
\end{conditionsiii}
then we refer respectively to a \emph{pseudo-ultrametric}%
\index{pseudo-ultrametric} %
and an \emph{ultrametric}. For each of these types of ``something''
metrics, a \emph{``something'' metric space}%
\index{metric!space} %
is a set endowed with a ``same thing'' metric.

The remainder of this section is dedicated to recalling the notion of
a uniform space. We build up here on the approach
of~\cite{Almeida&Steinberg:2008}. The reader may prefer to consult a
book on general topology such as~\cite{Willard:1970}.

\begin{definition}\label{d:ALCO-uniformity}
  A \emph{uniformity} on a set $X$ is a set~\unif U of reflexive
  binary relations on~$X$ such that the following conditions hold:
  \begin{enumerate}
  \item\label{item:ALCO-uniformity-1} if $R_1\in\unif U$ and
    $R_1\subseteq R_2$, then $R_2\in\unif U$;
  \item\label{item:ALCO-uniformity-2} if $R_1,R_2\in\unif U$,
    then there exists $R_3\in\unif U$ such that $R_3\subseteq R_1\cap
    R_2$;
  \item\label{item:ALCO-uniformity-3} if $R\in\unif U$, then
    there exists $R'\in\unif U$ such that $R'\circ R'\subseteq R$;
  \item\label{item:ALCO-uniformity-4} if $R\in\unif U$, then $R^{-1}\in\unif U$.
  \end{enumerate}
  An element of a uniformity is called an \emph{entourage}%
  \index{entourage}. %
  A \emph{uniform space}%
  \index{uniform space}%
  \index{space!uniform} %
  is a set endowed with a uniformity, which is usually understood and
  not mentioned explicitly.

  A \emph{uniformity basis}%
  \index{uniformity!basis}%
  \index{basis!uniformity} %
  on a set~$X$ is a set \unif U of reflexive binary relations on~$X$
  satisfying the above conditions
  (\ref{item:ALCO-uniformity-2})--(\ref{item:ALCO-uniformity-4}). The
  \emph{uniformity generated}%
  \index{uniformity!generated by} %
  by~\unif U consists of all binary relations on~$X$ that contain some
  member of~\unif U.

  A uniformity \unif U is \emph{transitive}%
  \index{uniformity!transitive --} %
  if it admits a basis consisting of transitive relations.
\end{definition}

The notion of a uniform space generalizes that of a pseudometric
space. In this respect, the following notation is suggestive of the
intuition behind the generalization. For an entourage $R$ and elements
$x,y\in X$, we write $d(x,y)<R$ to indicate that $(x,y)\in R$.
Indeed, given a metric $d$ on~$X$, if we let $R_\epsilon$ denote the
set of pairs $(x,y)\in X\times X$ such that $d(x,y)<\epsilon$, then
the set $\unif U_d$ of all $R_\epsilon$, with $\epsilon>0$, is a
uniformity basis on~$X$ such that $d(x,y)<R_\epsilon$ if and only if
$d(x,y)<\epsilon$. The uniformity $\unif U_d$ is said to be
\emph{defined} by~$d$.

The \emph{topology of}%
\index{uniform space!topology of --} %
a uniform space $X$ (or \emph{induced by}%
\index{uniformity!topology induced by --} %
its uniformity) has neighborhood basis for each $x\in X$
consisting of all sets of the form $B_R(x)=\{y\in X:d(x,y)<R\}$. Not
every topology is induced by a uniformity
\cite[Theorem~38.2]{Willard:1970}.

Note that the topology induced by a uniformity \unif U on~$X$ is Hausdorff if
and only if the intersection $\bigcap\unif U$ is the
diagonal (equality) relation $\Delta_X$. In general, it follows from
the definition of uniformity that $\bigcap\unif U$ is an equivalence
relation on~$X$. The quotient set $X/\bigcap\unif U$ is then naturally
endowed with the \emph{quotient uniformity}%
\index{uniformity!quotient --}, %
whose entourages are the relations $R/\bigcap\unif U$, with $R\in\unif
U$. Of course, the \emph{quotient space} $X/\bigcap\unif U$ is
Hausdorff and we call it the \emph{Hausdorffization}%
\index{Hausdorffization} %
of~$X$ while the natural mapping $X\to X/\bigcap\unif U$ is called the
\emph{natural Hausdorffization mapping}%
\index{Hausdorffization!natural -- mapping}. %
Given a uniformity \unif U on a set~$X$ and a subset $Y$, the
\emph{relative uniformity}%
\index{uniformity!relative --} %
on~$Y$ consists of the entourages of the form $R\cap(Y\times Y)$ with
$R\in\unif U$. Endowed with this uniformity, $Y$~is said to be a
\emph{uniform subspace}%
\index{uniform!subspace}%
\index{subspace!uniform --} %
of~$X$.

Recall that a \emph{net}%
\index{net} %
in a set $X$ is a function $f:I\to X$, where $I$ is a directed set,
meaning a set endowed with a partial order $\le$ such that, for all
$i,j\in I$, there is some $k\in I$ with $i\le k$ and $j\le k$. A
subnet of such a net is a net $g:J\to X$ for which there is an
order-preserving function $\lambda:J\to I$ such that $g=f\circ
\lambda$ and, for every $i\in I$, there is some $j\in J$ with $i\le
\lambda(j)$, that is, $\lambda$~has cofinal image in~$I$. Usually, the
net $f$ is represented by $(x_i)_{i\in I}$, where $x_i=f(i)$. The
subnet $g$ is then represented by $(x_{i_j})_{j\in J}$, where
$i_j=\lambda(j)$. In case $X$~is a topological space, we say that the
net $(x_i)_{i\in I}$ \emph{converges to}%
\index{net!converges to}%
~$x\in X$ if, for every neighborhood $N$ of~$x$, there is some $i\in
I$ such that $x_j\in N$ whenever $j\ge i$.

A net $(x_i)_{i\in I}$ in a uniform space $X$ is said to be a
\emph{Cauchy net}%
\index{net!Cauchy --} %
if, for every entourage~$R$, there is some $i\in I$ such that
$d(x_j,x_k)<R$ whenever $j,k\ge i$. A uniform space is said to be
\emph{complete}%
\index{uniform space!complete --} %
if every Cauchy net converges.

A Hausdorff topological space $X$ is said to be \emph{compact}%
\index{compact space}%
\index{topological space!compact --} %
if every open covering of $X$ contains a finite covering.
Equivalently, every net in~$X$ has a convergent subnet. A topological
space is said to be \emph{zero-dimensional}%
\index{zero-dimensional space}%
\index{topological space!zero-dimensional --} %
if it admits a basis consisting of \emph{clopen}%
\index{clopen set}%
\index{set!clopen} %
sets, that is sets that are both closed and open. It is well known
that a compact space is zero-dimensional if and only if it is
\emph{totally disconnected}%
\index{totally disconnected space}%
\index{topological space!totally disconnected --}, %
meaning that all its connected components are singleton sets. One can
also show that a compact space has a unique uniformity that induces
its topology \cite[Theorem~36.19]{Willard:1970}.

A uniform space $X$ is \emph{totally bounded}%
\index{totally bounded|see{uniform space}}%
\index{uniform space!totally bounded --} %
if, for every entourage $R$, there is a finite cover
$X=U_1\cup\cdots\cup U_n$ such that $\bigcup_{k=1}^n U_k\times
U_k\subseteq R$. It is well known that a Hausdorff uniform space is
compact if and only if it is complete and totally bounded
\cite[Theorem~39.9]{Willard:1970}.

A function $\varphi:X\to Y$ between two uniform spaces is
\emph{uniformly continuous}%
\index{function!uniformly continuous --} %
if, for every entourage $R$ of~$Y$, there is some entourage $R'$
of~$X$ such that $d(x_1,x_2)<R'$ implies
$d(\varphi(x_1),\varphi(x_2))<R$. Equivalently, $\varphi$ maps Cauchy
nets to Cauchy nets. We say that $\varphi$ is a \emph{uniform
  isomorphism}%
\index{uniform space!isomorphism} %
if it is a uniformly continuous bijection whose inverse is also
uniformly continuous. The function $\varphi$~is a \emph{uniform
  embedding}%
\index{uniform space!embedding} %
if $\varphi$~is a uniform isomorphism of~$X$ with a subspace of~$Y$.
Note that, if $\varphi:X\to Y$ is a uniformly continuous function,
then $\varphi$ induces a unique uniformly continuous function
$\psi:X/\bigcap\unif U_X\to Y/\bigcap\unif U_Y$ between the
corresponding Hausdorffizations such that
$\psi\circ\pi_X=\pi_Y\circ\varphi$, where $\pi_X$ and $\pi_Y$ are the
natural Hausdorffization mappings. We call $\psi$ the
\emph{Hausdorffization}%
\index{Hausdorffization} %
of~$\varphi$.

One can show \cite[Theorem~38.3]{Willard:1970}
that a uniformity is defined
by some pseudometric (respectively by a pseudo-ultrametric) if and
only if it has a countable basis (and, respectively, it is
    transitive). In the Hausdorff case, one can remove the prefix
``pseudo''.
Moreover, every uniform space can be uniformly embedded in
a product of pseudometric spaces \cite[Theorem~39.11]{Willard:1970}.

For every uniform space $X$ there is a complete uniform space $\hat X$
such that $X$ embeds uniformly in $\hat X$ as a dense subspace. This
can be done by first uniformly embedding $X$ in a product of
pseudometric spaces and then completing each factor by diagonally
embedding it in the space of equivalence classes of Cauchy sequences
under the relation $(x_n)_n\approx(y_n)_n$ if $\lim d(x_n,y_n)=0$
(cf.~\cite[Theorems 39.12 and 24.4]{Willard:1970}).

Such a space $\hat X$ is unique in the sense that, given any other
complete uniform space $Y$ in which $X$~embeds uniformly as a dense
subspace, there is a unique uniform isomorphism $\hat X\to Y$ leaving
$X$ pointwise fixed. The uniform space $\hat X$ is called the
\emph{completion}%
\index{uniform space!completion} %
of~$X$. It is easy to verify that the Hausdorffization of the
completion of~$X$ is the completion of the Hausdorffization of~$X$; it
is known as the \emph{Hausdorff completion}%
\index{uniform space!Hausdorff completion} %
of~$X$. Moreover, the Hausdorff completion of~$X$ is compact if and
only if $X$~is totally bounded. The following is a key property of
completions.

\begin{proposition}\label{p:ALCO-extensions-to-completions}
  Let $X$ and $Y$ be uniform spaces and let $\varphi:X\to Y$ be a
  uniformly continuous function. Then there is a unique extension
  of~$\varphi$ to a uniformly continuous function $\hat\varphi:\hat
  X\to\hat Y$.
\end{proposition}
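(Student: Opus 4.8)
The plan is to construct $\hat\varphi$ pointwise using Cauchy nets, exploiting that $X$ is dense in $\hat X$ and that $\hat Y$ is complete. Given $\hat x\in\hat X$, density provides a net $(x_i)_{i\in I}$ in $X$ with $x_i\to\hat x$ in $\hat X$. Being convergent, this net is Cauchy in $\hat X$, hence Cauchy in the uniform subspace $X$. Since $\varphi$ is uniformly continuous, it maps Cauchy nets to Cauchy nets, so $(\varphi(x_i))_{i\in I}$ is Cauchy in $Y$ and therefore also in $\hat Y$, as the inclusion $Y\hookrightarrow\hat Y$ is a uniform embedding. Completeness of $\hat Y$ makes this image net convergent, and I would set $\hat\varphi(\hat x)$ equal to its limit; taking constant nets shows at once that $\hat\varphi$ restricts to $\varphi$ on $X$. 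To see that the value does not depend on the chosen net, given two nets $(x_i)_{i\in I}$ and $(x'_j)_{j\in J}$ converging to the same $\hat x$ I would reindex both over the product directed set $I\times J$, placing them on a common index set. Since both are eventually within any prescribed entourage of $\hat x$, they are mutually close in $\hat X$, hence close in $X$ by the relative uniformity, so uniform continuity of $\varphi$ keeps their images close; a triangle estimate through a symmetric entourage $S$ with $S\circ S\circ S\subseteq R$ then forces the two candidate limits to coincide in the (Hausdorff) completion $\hat Y$.

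The main work is to verify that $\hat\varphi$ is uniformly continuous, which I would carry out directly with entourages. Fix an entourage $R$ of $\hat Y$ and choose a symmetric entourage $S$ of $\hat Y$ with $S\circ S\circ S\subseteq R$. Uniform continuity of $\varphi$ supplies an entourage $T$ of $X$, of the form $T=\hat T\cap(X\times X)$ for some entourage $\hat T$ of $\hat X$, such that $d(x,x')<T$ implies $d(\varphi(x),\varphi(x'))<S$. Picking a symmetric entourage $V$ of $\hat X$ with $V\circ V\circ V\subseteq\hat T$, I claim that $d(\hat x_1,\hat x_2)<V$ implies $d(\hat\varphi(\hat x_1),\hat\varphi(\hat x_2))<R$. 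Indeed, by density together with the defining property of $\hat\varphi$ I can choose $a_1\in X\cap B_V(\hat x_1)$ and $a_2\in X\cap B_V(\hat x_2)$ whose images satisfy $d(\varphi(a_k),\hat\varphi(\hat x_k))<S$; then $(a_1,a_2)\in V\circ V\circ V\subseteq\hat T$, so $(a_1,a_2)\in T$ and $d(\varphi(a_1),\varphi(a_2))<S$, and the triangle inequality through $\varphi(a_1)$ and $\varphi(a_2)$ yields $d(\hat\varphi(\hat x_1),\hat\varphi(\hat x_2))<S\circ S\circ S\subseteq R$. This is the step I expect to be the crux: it is where the \emph{uniform} (not merely topological) continuity of $\varphi$ is indispensable, and where the passage between the uniformities of $X$ and $\hat X$, and of $Y$ and $\hat Y$, must be handled carefully.

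For uniqueness, any uniformly continuous $\psi\colon\hat X\to\hat Y$ extending $\varphi$ must agree with $\hat\varphi$ on the dense set $X$. Since every point of $\hat X$ is a limit of a net in $X$, continuity forces both $\psi(\hat x)$ and $\hat\varphi(\hat x)$ to be limits of the single net $(\varphi(x_i))$; as limits in $\hat Y$ are unique, we conclude $\psi=\hat\varphi$. The only delicate point here is that this literal pointwise uniqueness presumes separated limits in the target, so strictly one works in the Hausdorff completion (or up to identification of indistinguishable points); everything else is a routine entourage computation.
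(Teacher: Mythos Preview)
The paper does not actually prove this proposition; it is stated as a standard fact about completions, with the surrounding discussion referring the reader to Willard's textbook for the general theory of uniform spaces. Your argument is the classical one and is correct: define the extension by continuity on approximating nets from the dense subspace, check well-definedness via a common refinement of the index sets, verify uniform continuity by an $S\circ S\circ S\subseteq R$ entourage chase, and deduce uniqueness from density and continuity.

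Your closing remark about Hausdorffness is also apt. The paper explicitly distinguishes the completion $\hat X$ from the Hausdorff completion, so the uniqueness clause---and even the pointwise definition of $\hat\varphi$ as \emph{the} limit of $(\varphi(x_i))_i$---is literally correct only after passing to the Hausdorff completion, or equivalently up to identification of topologically indistinguishable points. This looseness is implicit in the paper's own statement as well, and your caveat handles it appropriately.
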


Let $I$ be a nonempty set. If $\unif U_i$ is a uniformity on a
set~$X_i$ for each $i\in I$, then the Cartesian product $\prod_{i\in
  I}X_i$ may be endowed with the \emph{product uniformity}%
\index{uniformity!product --}, %
with basis consisting of all sets of the form
$p_{i_1}^{-1}(R_1)\cap\cdots\cap p_{i_n}^{-1}(R_n)$, where each
$R_j\in\unif U_{i_j}$ and each $p_i:X\times X\to X_i\times X_i$ is the
natural projection on each component. From the fact that a nonempty
product of complete uniform spaces is complete
\cite[Theorem~39.6]{Willard:1970}, it follows that completion and
product commute. One can also easily show that Hausdorffization and
product commute.

\subsection{Profinite uniformities and metrics}
\label{sec:ALCO-prof-unif-metr}

By a \emph{topological algebra}%
\index{algebra!topological --} %
we mean an algebra endowed with a topology with respect to which each
basic operation is continuous. A \emph{compact algebra}%
\index{algebra!compact} %
is a topological algebra whose topology is compact. We view finite
algebras as topological algebras with respect to the discrete
topology. When we write that two topological algebras are isomorphic
we mean that there is an algebraic isomorphism between them which is
also a homeomorphism. A subset $X$ of a topological algebra $S$ is
said to \emph{generate}%
\index{algebra!topological -- generated by} %
$S$ if it generates a dense subalgebra of~$S$.

Similarly, a \emph{uniform algebra}%
\index{algebra!uniform --} %
is an algebra endowed with a uniformity such that the basic operations
are uniformly continuous. Note that a uniform algebra is also a
topological algebra for the topology induced by the uniformity and
that, in case the topology is compact, the basic operations are
continuous if and only if they are uniformly continuous (for the
unique uniformity inducing the topology). Consistently with the choice
of the discrete topology for finite algebras, we endow them with the
\emph{discrete uniformity}%
\index{uniformity!discrete --}, %
in which every reflexive relation is an entourage.

Let \Cl F be a class of finite algebras. A subset $L$ of a topological
(respectively uniform) algebra $S$~is said to be \emph{\Cl
  F-recognizable} if there is a continuous (resp.~uniformly
continuous) homomorphism $\varphi:S\to P$ into some $P\in\Cl F$ such
that $L=\varphi^{-1}\varphi L$. In case \Cl F consists of all finite
algebras, we say simply that $L$ is \emph{recognizable}%
\index{subset!recognizable -- of topological algebra}%
\index{algebra!recognizable subset of topological --} %
to mean that it is \Cl F-recognizable.

Let \Cl T be a class of topological algebras. A topological algebra
$S$~is said to be \emph{residually in~\Cl T}%
\index{algebra!topological -- residually in} %
if, for every pair of distinct points $s,t\in S$, there exists a
continuous homomorphism $\varphi:S\to P$, into some $P\in\Cl T$, such
that $\varphi(s)\ne\varphi(t)$.

Suppose that $S$ is a topological algebra and \pv Q is a
pseudoquasivariety. The case that will interest us the most is when
\pv Q is a pseudovariety and $S$~is a discrete algebra. The
\emph{pro-\pv Q uniformity} on~$S$, denoted $\unif U_\pv Q$, is
generated by the basis consisting of all congruences $\theta$ such
that $S/\theta\in\pv Q$ and the natural mapping $S\to S/\theta$ is
continuous. Note that $\unif U_\pv Q$ is indeed a uniformity on~$S$,
which is transitive. In case \pv Q consists of all finite algebras, we
also call the pro-\pv Q uniformity the \emph{profinite uniformity}%
\index{uniformity!profinite --}. %
The pro-\pv Q uniformity on~$S$ is Hausdorff if and only if $S$ is
residually in~\pv Q as a topological algebra. More precisely, the
Hausdorffization of~$S$ is given by the pro-\pv Q uniform structure of
$S/\theta_\pv Q$, under the quotient topology. The topology induced by
the pro-\pv Q uniformity of the algebra $S$ is also called its
\emph{pro-\pv Q topology}%
\index{topology!pro-$\mathsf{Q}$ --}%
\index{topology!profinite --}. %
Sets that are open in this topology are also said to be \emph{\pv
  Q-open} and a similar terminology is adopted for closed and clopen
sets. Similar notions can be defined if we start with a uniform
algebra instead of a topological algebra, replacing continuity by
uniform continuity, but we will have no use for them here.

Note that the pro-\pv Q uniformity $\unif U_\pv Q$ is totally bounded
for a pseudoquasivariety \pv Q. Given
a subset $L$ of an algebra $S$, we denote by $E_L$ the equivalence
relation whose classes are $L$ and its complement $S\setminus L$. Note
that, for a congruence $\theta$ on~$S$, we have $\theta=\bigcap_L
E_L$, where the intersection runs over all $\theta$-classes.
The following is now immediate.

\begin{proposition}\label{p:ALCO-pro-Q-uniformity}
  Suppose that \pv Q is a pseudoquasivariety and $S$~is a topological
  algebra.
  \begin{enumerate}
  \item\label{item:ALCO-pro-Q-uniformity-1} The Hausdorff completion
    of $S$ under $\unif U_\pv Q$ is compact.
  \item\label{item:ALCO-pro-Q-uniformity-2} A subset $L$ of~$S$ is \pv
    Q-recognizable if and only if $E_L$ belongs to~$\unif U_\pv Q$. In
    case \pv Q is a pseudovariety, a further equivalent condition is
    that the syntactic congruence $\sim_L$ belong to~$\unif U_\pv Q$.
  \item\label{item:ALCO-pro-Q-uniformity-3} The \pv Q-recognizable
    subsets of~$S$ are \pv Q-clopen and constitute a basis of the
    pro-\pv Q topology of~$S$. In particular, the pro-\pv Q topology
    of~$S$ is zero-dimensional and a subset $L$ of~$S$ is \pv Q-open
    if and only if $L$ is a union of \pv Q-recognizable sets.
  \end{enumerate}
\end{proposition}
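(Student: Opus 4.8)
The plan is to read all three items off the definitions together with the general facts recalled in Section~\ref{sec:ALCO-unif-spaces}. Part~\ref{item:ALCO-pro-Q-uniformity-1} is essentially immediate: as noted just before the statement, $\unif U_{\pv Q}$ is totally bounded, the point being that every entourage contains a basic congruence $\theta$ with $S/\theta\in\pv Q$ finite, so its finitely many classes $U_1,\dots,U_n$ give a cover with $\bigcup_{k}U_k\times U_k\subseteq\theta$. Since the Hausdorff completion of a uniform space is compact exactly when the space is totally bounded (recalled earlier from~\cite{Willard:1970}), the conclusion follows at once.

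For part~\ref{item:ALCO-pro-Q-uniformity-2} I would work through the kernel congruence. If $L$ is \pv Q-recognizable via a continuous homomorphism $\varphi\colon S\to P$ with $P\in\pv Q$ and $L=\varphi^{-1}\varphi L$, then $\theta=\ker\varphi$ has $S/\theta\cong\varphi(S)$, a subalgebra of $P$, so $S/\theta\in\pv Q$; the natural map $S\to S/\theta$ is continuous, so $\theta$ is a basic entourage, and $L$ being a union of $\theta$-classes yields $\theta\subseteq E_L$, whence $E_L\in\unif U_{\pv Q}$ by upward closure (condition~\ref{item:ALCO-uniformity-1} of Definition~\ref{d:ALCO-uniformity}). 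Conversely, if $E_L\in\unif U_{\pv Q}$ then some basic $\theta\subseteq E_L$ makes $L$ a union of $\theta$-classes, and the natural continuous homomorphism $S\to S/\theta\in\pv Q$ recognizes $L$. For the syntactic clause, the inclusion $\sim_L\subseteq E_L$ handles one implication by upward closure; for the other, a recognizing $\varphi$ satisfies $\ker\varphi\subseteq\sim_L$, so $S/{\sim_L}$ is a homomorphic image, of finite index, of $S/\ker\varphi\in\pv Q$, and closure of the \emph{pseudovariety} \pv Q under homomorphic images gives $S/{\sim_L}\in\pv Q$. The step I expect to need the most care is verifying that the natural map $S\to S/{\sim_L}$ is continuous, which is what promotes $\sim_L$ to a genuine basic entourage: this reduces to showing each $\sim_L$-class is open, and since $\ker\varphi\subseteq\sim_L$ every such class is a union of $\ker\varphi$-classes, each of which is $\varphi^{-1}$ of a point of the finite discrete algebra $P$ and hence open. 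It is precisely here that the pseudovariety hypothesis is used, explaining the qualification in the statement.

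Finally, for part~\ref{item:ALCO-pro-Q-uniformity-3} I would argue from the entourage neighbourhood basis $B_R(x)$. If $L$ is \pv Q-recognizable then $E_L\in\unif U_{\pv Q}$ by part~\ref{item:ALCO-pro-Q-uniformity-2}, and $B_{E_L}(x)$ equals $L$ when $x\in L$ and equals $S\setminus L$ when $x\notin L$; hence $L$ and its complement are both pro-\pv Q-open, i.e.\ $L$ is \pv Q-clopen. To see that the recognizable sets form a basis, note that for a basic congruence $\theta$ the ball $B_\theta(x)$ is the single class $x/\theta$, which is recognizable because $E_{x/\theta}\supseteq\theta$, and that these classes constitute a neighbourhood basis of the pro-\pv Q topology. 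Zero-dimensionality is then immediate from this basis of clopen sets, and the final equivalence is formal: a union of recognizable sets is \pv Q-open since each member is \pv Q-clopen, while conversely any \pv Q-open set, being a union of basic balls $x/\theta$, is a union of recognizable sets.
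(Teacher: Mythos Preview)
Your proof is correct and follows exactly the approach the paper intends: the paper does not give a proof at all, declaring the proposition ``immediate'' after noting that $\unif U_{\pv Q}$ is totally bounded and that $\theta=\bigcap_L E_L$ over the $\theta$-classes, and your write-up is a faithful and careful expansion of precisely those hints. The one step you flag as needing care---continuity of $S\to S/{\sim_L}$ via openness of the $\ker\varphi$-classes---is handled correctly and is indeed the only place where closure under homomorphic images (hence the pseudovariety hypothesis) is required.
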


In contrast, not every \pv Q-clopen subset of an algebra $S$ needs to
be \pv Q-recognizable. For instance, for the pseudovariety \pv N, of
all finite nilpotent semigroups, one may easily show that the pro-\pv
N topology on the (discrete) free semigroup $A^+$ over a finite
alphabet $A$ is discrete, and so every subset is clopen, while it is
well-known that the \pv N-recognizable subsets of~$A^+$ are the finite
and cofinite languages.

For a pseudoquasivariety \pv Q and a topological algebra $S$, we
define two functions on $S\times S$ as follows. For $s,t\in S$, $r_\pv
Q(s,t)$~is the minimum of the cardinalities of algebras $P$ from~\pv Q
for which there is some continuous homomorphism $\varphi:S\to P$ such
that $\varphi(s)\ne\varphi(t)$, where we set $\min\emptyset=\infty$.
We then put $d_\pv Q(s,t)=2^{-r_\pv Q(s,t)}$ with the convention that
$2^{-\infty}=0$. One can easily check that $d_\pv Q$ is a
pseudo-ultrametric on~$S$, which is called the \emph{pro-\pv Q
  pseudo-ultrametric}%
\index{pseudo-ultrametric!pro-$\mathsf{Q}$ --} %
on~$S$.

The following result is an immediate generalization 
of~\cite[Section~3]{Pin&Silva:2011},
where the hypothesis that the
signature is finite serves to guarantee that there are at most
countably many isomorphism classes of finite $\sigma$-algebras.

\begin{proposition}\label{p:ALCO-uniformity-vs-ultrametric}
  Suppose that $\sigma$~is a finite signature. For a
  pseudoquasivariety \pv Q and a topological algebra $S$, the
  following conditions are equivalent:
  \begin{enumerate}
  \item the pro-\pv Q uniformity on~$S$ is defined by the pro-\pv Q
    pseudo-ultrametric on~$S$;
  \item the pro-\pv Q uniformity on~$S$ is defined by some
    pseudo-ultrametric on~$S$;
  \item there are at most countably many \pv Q-recognizable subsets
    of~$S$;
  \item for every $P\in\pv Q$, there are at most countably
    many homomorphisms $S\to P$.    
  \end{enumerate}
  In particular, all these conditions hold in case $S$~is finitely
  generated. Moreover, if \pv Q contains nontrivial algebras then, for
  the discrete free algebra $F_A\Cl Q$ over the variety generated
  by~\pv Q, the pro-\pv Q uniformity is defined by the pro-\pv Q
  pseudo-ultrametric if and only if $A$~is finite.
\end{proposition}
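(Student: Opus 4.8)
The plan is to run the cyclic chain $(1)\Rightarrow(2)\Rightarrow(3)\Rightarrow(4)\Rightarrow(1)$, using throughout the description of $\unif U_\pv Q$ by its canonical basis of finite-index congruences $\theta$ with $S/\theta\in\pv Q$, together with the fact recorded in Proposition~\ref{p:ALCO-pro-Q-uniformity} that a subset $L$ is $\pv Q$-recognizable exactly when $E_L\in\unif U_\pv Q$. It is convenient to identify the balls of $d_\pv Q$ first: writing $\theta_{\le n}=\bigcap\{\ker\varphi\}$ for the intersection of the kernels of all continuous homomorphisms $\varphi\colon S\to P$ with $P\in\pv Q$ and $|P|\le n$, one checks straight from the definition of $r_\pv Q$ that $\{(s,t):d_\pv Q(s,t)<2^{-n}\}=\theta_{\le n}$. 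Moreover, if $\theta$ is a basic entourage of $\unif U_\pv Q$ with $|S/\theta|=m$, then any pair outside $\theta$ is separated by the size-$m$ algebra $S/\theta$, so $\theta_{\le m}\subseteq\theta$ and hence $\theta\in\unif U_{d_\pv Q}$. Thus the inclusion $\unif U_\pv Q\subseteq\unif U_{d_\pv Q}$ holds unconditionally, and the whole content of $(1)$ is the reverse inclusion, namely that $\theta_{\le n}\in\unif U_\pv Q$ for every $n$.

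The implication $(1)\Rightarrow(2)$ is immediate. For $(2)\Rightarrow(3)$, a pseudo-ultrametric defining $\unif U_\pv Q$ supplies a countable basis (its balls of radius $2^{-n}$); since the finite-index congruences form a basis of $\unif U_\pv Q$, each such ball contains a finite-index congruence $\theta_n$, and $\{\theta_n\}_n$ is again a countable basis. Now any $\pv Q$-recognizable $L$ has $E_L\in\unif U_\pv Q$, so $E_L\supseteq\theta_n$ for some $n$, which just says that $L$ is a union of the finitely many $\theta_n$-classes; as each $\theta_n$ yields only finitely many such unions, there are at most countably many recognizable subsets. For $(3)\Rightarrow(4)$, a continuous homomorphism $\varphi\colon S\to P$ into a finite $P$ is determined by the labelled partition $(\varphi^{-1}(p))_{p\in P}$ of $S$ into $\pv Q$-recognizable sets; with only countably many recognizable sets and $P$ finite, there are at most countably many such labelled partitions, hence at most countably many homomorphisms $S\to P$.

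The step $(4)\Rightarrow(1)$ is where the finiteness of $\sigma$ enters and is the crux of the proof. Because $\sigma$ is finite, there are only finitely many isomorphism types $P_1,\dots,P_k$ of $\sigma$-algebras of size at most $n$ lying in $\pv Q$, so $\theta_{\le n}=\ker\Phi$, where $\Phi\colon S\to\prod_j P_j^{\,H_j}$ assembles all continuous homomorphisms $H_j=\mathrm{Hom}(S,P_j)$. To conclude $(1)$ it suffices that the image $\Phi(S)\cong S/\theta_{\le n}$ be finite, for then it is a subalgebra of a finite product of members of $\pv Q$, hence lies in $\pv Q$, and $\theta_{\le n}$ is itself a basic finite-index entourage of $\unif U_\pv Q$; combined with the reverse inclusion noted above this gives $\unif U_\pv Q=\unif U_{d_\pv Q}$. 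I expect the main obstacle to be precisely this finiteness: condition~$(4)$ by itself only forces each $H_j$ to be \emph{countable}, whereas what must be secured is that each $H_j$ is \emph{finite}, so that $\prod_j P_j^{\,H_j}$ is finite. Finite generation is exactly what supplies this: if $S$ is generated by a finite set $X$, then a homomorphism into $P_j$ is determined by the images of $X$, so $|H_j|\le|P_j|^{|X|}<\infty$, each $\theta_{\le n}$ is a finite intersection of finite-index congruences and hence finite-index, and $(1)$ follows. This simultaneously proves the ``in particular'' clause, since finite generation also trivially yields $(4)$.

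Finally, for the ``moreover'' clause, the universal property identifies a homomorphism $F_A\Cl Q\to P$ with an arbitrary map $A\to P$, so $|\mathrm{Hom}(F_A\Cl Q,P)|=|P|^{|A|}$. Assuming $\pv Q$ has a nontrivial algebra, choose $P$ with $|P|\ge2$; then this cardinal is finite precisely when $A$ is finite and is uncountable when $A$ is infinite. Hence for $A$ finite we are in the finitely generated case, so $(1)$ holds; and for $A$ infinite condition~$(4)$ fails, so by the established equivalences $(1)$ fails as well, yielding the stated ``if and only if''.
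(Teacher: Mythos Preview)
Your implications $(1)\Rightarrow(2)\Rightarrow(3)\Rightarrow(4)$ are correct, as are the ``in particular'' and ``moreover'' clauses. The genuine gap is exactly where you locate it, at $(4)\Rightarrow(1)$: you need $S/\theta_{\le n}$ to be \emph{finite} in order to exhibit $\theta_{\le n}$ as a basic entourage of~$\unif U_\pv Q$, but condition~(4) only makes each $\mathrm{Hom}(S,P_j)$ \emph{countable}. You then complete the argument only under the extra hypothesis that $S$ is finitely generated, which handles the ``in particular'' clause but leaves the general equivalence of (1)--(4) unproved. As written, what you have actually established is $(1)\Rightarrow(2)\Leftrightarrow(3)\Leftrightarrow(4)$, together with the two supplementary clauses.

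The paper gives no proof of its own; it defers to Pin--Silva and records only that the finite-signature hypothesis ``serves to guarantee that there are at most countably many isomorphism classes of finite $\sigma$-algebras''. That remark points to a different use of~(4) than yours: with countably many isomorphism types of finite $\sigma$-algebras and, by~(4), countably many continuous homomorphisms into each, there are countably many such homomorphisms altogether, hence countably many basic congruences, hence $\unif U_\pv Q$ has a countable basis; since $\unif U_\pv Q$ is transitive, the metrization theorem for uniformities then yields~(2). So the route suggested by the paper's remark uses the finiteness of~$\sigma$ \emph{globally} (to count all finite algebras) rather than \emph{locally at each~$n$} as you do, and it lands at~(2) rather than~(1). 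This closes the loop $(2)\Leftrightarrow(3)\Leftrightarrow(4)$ cleanly, but the passage back to the \emph{specific} pseudo-ultrametric~$d_\pv Q$---that is, the implication $(2)\Rightarrow(1)$, or equivalently that each $\theta_{\le n}$ is an entourage once $\unif U_\pv Q$ has a countable basis---still has to be argued, and neither your write-up nor the paper's one-line hint supplies it. That is the missing step you should try to fill.
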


The next result gives a different way of looking into pro-\pv Q
topologies and uniformities.

\begin{proposition}\label{p:ALCO-pro-Q-uniformity-vs-algebra}
  Let $S$ be a topological algebra and \pv Q a pseudoquasivariety.
  \begin{enumerate}
  \item\label{i:ALCO-pro-Q-uniformity-vs-algebra-1} The pro-\pv Q
    uniformity of~$S$ is the smallest uniformity \unif U on~$S$ for
    which all continuous homomorphisms from $S$ into members of~$Q$
    are uniformly continuous.
  \item\label{i:ALCO-pro-Q-uniformity-vs-algebra-2} The pro-\pv Q
    topology of~$S$ is the smallest topology \unif T on~$S$ for which
    all continuous homomorphisms from $S$ into members of~$Q$ remain
    continuous.
  \item\label{i:ALCO-pro-Q-uniformity-vs-algebra-3} The algebra $S$ is
    a uniform algebra with respect to its pro-\pv Q uniformity. In
    particular, it is a topological algebra for its pro-\pv Q
    topology.
  \end{enumerate}
\end{proposition}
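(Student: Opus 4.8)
The plan is to prove all three parts directly from the defining basis of $\unif U_\pv Q$ — the congruences $\theta$ with $S/\theta\in\pv Q$ whose quotient map $\pi_\theta\colon S\to S/\theta$ is continuous — together with the fact that finite algebras carry the discrete uniformity, whose smallest entourage is the diagonal. The conceptual core throughout is the correspondence, via kernels, between continuous homomorphisms from $S$ into $\pv Q$ and these defining congruences.

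For part~(1) I would argue minimality in two directions. First, that $\unif U_\pv Q$ makes every continuous homomorphism $\varphi\colon S\to P$ with $P\in\pv Q$ uniformly continuous: the relation $\ker\varphi$ is a congruence with $S/\ker\varphi$ isomorphic to a subalgebra of~$P$, so closure of the pseudoquasivariety under subalgebras and isomorphic images gives $S/\ker\varphi\in\pv Q$; continuity of~$\varphi$ together with discreteness of the finite quotient forces $\pi_{\ker\varphi}$ to be continuous, so $\ker\varphi$ is a basic entourage of $\unif U_\pv Q$. Since $\Delta_P$ is the smallest entourage of the discrete space~$P$, uniform continuity of~$\varphi$ amounts to exhibiting an entourage $R'\in\unif U_\pv Q$ with $(x,y)\in R'\Rightarrow\varphi(x)=\varphi(y)$, and $R'=\ker\varphi$ does the job. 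Second, that any uniformity $\unif U$ for which all such $\varphi$ are uniformly continuous must contain $\unif U_\pv Q$: each basic congruence $\theta$ is the kernel of its own continuous quotient map $\pi_\theta$, so applying uniform continuity of~$\pi_\theta$ to the entourage $\Delta_{S/\theta}$ yields some $R'\in\unif U$ with $R'\subseteq\theta$, whence $\theta\in\unif U$ by upward closure. As $\unif U_\pv Q$ is generated by these congruences, $\unif U_\pv Q\subseteq\unif U$.

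For part~(2) I would run the analogous two-sided argument at the level of topologies, invoking Proposition~\ref{p:ALCO-pro-Q-uniformity}, which states that the \pv Q-recognizable subsets form a basis of the pro-\pv Q topology. On one hand, each continuous homomorphism $\varphi\colon S\to P$ with $P\in\pv Q$ remains continuous for the pro-\pv Q topology, because every subset $U\subseteq P$ is open and $\varphi^{-1}(U)$ is \pv Q-recognizable, hence \pv Q-open. On the other hand, if $\mathcal T$ is any topology making all such $\varphi$ continuous, then every \pv Q-recognizable set $L=\varphi^{-1}(\varphi L)$ is the $\varphi$-preimage of the (automatically open) subset $\varphi L$ of a discrete~$P$, so $L\in\mathcal T$; since these sets form a basis of the pro-\pv Q topology, that topology is contained in~$\mathcal T$, giving minimality.

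For part~(3) the key observation is that a congruence is precisely a relation compatible with the basic operations. Fixing $f\in\sigma_n$ and a basic target entourage $\theta$ of $\unif U_\pv Q$, the product relation $\bigcap_{i=1}^n p_i^{-1}(\theta)$ is a basic entourage of the product uniformity on $S^n$; if all coordinate pairs of $(\mathbf x,\mathbf y)$ lie in~$\theta$, compatibility gives $(f^S(\mathbf x),f^S(\mathbf y))\in\theta$, which is exactly uniform continuity of $f^S$ (nullary operations being trivial). Hence $S$ is a uniform algebra for $\unif U_\pv Q$, and by the earlier remark that a uniform algebra is a topological algebra for its induced topology, $S$ is a topological algebra for the pro-\pv Q topology. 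I expect the only genuine bookkeeping hurdle to be keeping the minimal entourage of the discrete uniformity straight and verifying continuity of the quotient maps $\pi_\theta$; once the kernel correspondence is in place, all three parts are essentially immediate.
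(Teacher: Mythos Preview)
Your proof is correct. The paper states this proposition without proof, treating it as an immediate consequence of the definitions; your argument is precisely the routine verification the authors are implicitly leaving to the reader, and each of the three parts is handled cleanly via the kernel--congruence correspondence you identify.
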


Following \cite{Pin&Silva:2011}, we say that a function $\varphi:S\to
T$ between two topological algebras is \emph{$(\pv Q,\pv R)$-uniformly
  continuous} if it is uniformly continuous with respect to the
uniformities $\unif U_\pv Q$, on~$S$, and $\unif U_\pv R$, on~$T$.
Similarly, we say that $\varphi$~is \emph{$(\pv Q,\pv R)$-continuous}
if it is continuous with respect to the \pv Q-topology of~$S$ and the
\pv R-topology of~$T$.

It is now easy to deduce the following result, which is a
straightforward generalization of~\cite[Theorem~4.1]{Pin&Silva:2011}.

\begin{proposition}\label{p:ALCO-continuity}
  Let \pv Q and \pv R be two pseudoquasivarieties, $S$ and $T$ be two
  topological algebras, and $\varphi:S\to T$ an arbitrary function.
  \begin{enumerate}
  \item The function $\varphi$~is $(\pv Q,\pv R)$-uniformly continuous
    if and only if, for every \pv R-recogniz\-able subset $L$
    of\/~$T$, $\varphi^{-1}L$ is a \pv Q-recognizable subset of~$S$.
  \item The function $\varphi$~is $(\pv Q,\pv R)$-continuous if
    and only if, for every \pv R-recognizable subset $L$ of\/~$T$,
    $\varphi^{-1}L$ is a union of \pv Q-recognizable subsets of~$S$.
  \end{enumerate}
\end{proposition}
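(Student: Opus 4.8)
The plan is to reduce both statements to the entourage/open-set characterizations of recognizability supplied by Proposition~\ref{p:ALCO-pro-Q-uniformity}. The pivotal elementary observation is that, for any subset $L$ of~$T$, the preimage $\varphi^{-1}L$ satisfies $E_{\varphi^{-1}L}=(\varphi\times\varphi)^{-1}(E_L)$: a pair $(s_1,s_2)$ lies in $E_{\varphi^{-1}L}$ exactly when $\varphi(s_1)$ and $\varphi(s_2)$ lie on the same side of the partition $\{L,T\setminus L\}$, that is, when $(\varphi(s_1),\varphi(s_2))\in E_L$. Combined with Proposition~\ref{p:ALCO-pro-Q-uniformity}(\ref{item:ALCO-pro-Q-uniformity-2}), which identifies recognizability of a subset with membership of its associated equivalence relation in the pro-\pv Q (resp.\ pro-\pv R) uniformity, this lets me pass freely between the language of recognizable sets and the language of entourages.

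For part~(1) I would first treat the forward implication. If $\varphi$ is $(\pv Q,\pv R)$-uniformly continuous and $L$ is \pv R-recognizable, then $E_L\in\unif U_\pv R$, so uniform continuity yields some $R'\in\unif U_\pv Q$ with $(\varphi\times\varphi)(R')\subseteq E_L$, i.e.\ $R'\subseteq(\varphi\times\varphi)^{-1}(E_L)=E_{\varphi^{-1}L}$; upward closure of $\unif U_\pv Q$ (axiom~(\ref{item:ALCO-uniformity-1})) then gives $E_{\varphi^{-1}L}\in\unif U_\pv Q$, so $\varphi^{-1}L$ is \pv Q-recognizable. For the converse it suffices to verify the uniform-continuity condition on a basis of $\unif U_\pv R$, namely the congruences $\theta$ with $T/\theta\in\pv R$. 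Since $T/\theta$ is finite, $\theta$ has finitely many classes $L_1,\dots,L_k$, each \pv R-recognizable, and $\theta=\bigcap_i E_{L_i}$. The hypothesis makes each $E_{\varphi^{-1}L_i}$ an entourage of $\unif U_\pv Q$, whence $(\varphi\times\varphi)^{-1}(\theta)=\bigcap_i E_{\varphi^{-1}L_i}\in\unif U_\pv Q$ by closure under finite intersections (axioms~(\ref{item:ALCO-uniformity-1}) and~(\ref{item:ALCO-uniformity-2})). Taking $R'=(\varphi\times\varphi)^{-1}(\theta)$ verifies the condition for $\theta$, and monotonicity of the preimage extends it to every entourage of $\unif U_\pv R$.

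Part~(2) would then follow as a purely topological translation. By Proposition~\ref{p:ALCO-pro-Q-uniformity}(\ref{item:ALCO-pro-Q-uniformity-3}) the \pv R-recognizable subsets of~$T$ form a basis of the pro-\pv R topology and, dually, a subset of~$S$ is \pv Q-open precisely when it is a union of \pv Q-recognizable sets. For the forward implication, continuity carries the \pv R-open (indeed clopen) set $L$ to the \pv Q-open set $\varphi^{-1}L$, which is therefore such a union. Conversely, writing an arbitrary \pv R-open $U$ as a union $\bigcup_j L_j$ of basic \pv R-recognizable sets gives $\varphi^{-1}U=\bigcup_j\varphi^{-1}L_j$, a union of \pv Q-open sets and hence \pv Q-open, so $\varphi$ is $(\pv Q,\pv R)$-continuous.

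The only step beyond bookkeeping is the converse of part~(1): the reduction to basic congruence entourages together with the finite decomposition $\theta=\bigcap_i E_{L_i}$ is exactly where the finiteness built into a pseudoquasivariety is essential, since it is what allows the finite-intersection closure of the uniformity to transport the hypothesis from the individual recognizable sets $L_i$ to the entourage $(\varphi\times\varphi)^{-1}(\theta)$.
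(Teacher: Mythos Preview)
Your proposal is correct and is precisely the kind of argument the paper has in mind: the text does not give a proof but merely says the result ``is now easy to deduce'' from the preceding material, and your deduction rests exactly on Proposition~\ref{p:ALCO-pro-Q-uniformity} (parts~(\ref{item:ALCO-pro-Q-uniformity-2}) and~(\ref{item:ALCO-pro-Q-uniformity-3})) together with the decomposition $\theta=\bigcap_i E_{L_i}$ recalled just before that proposition. There is nothing to add.
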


Proposition~\ref{p:ALCO-continuity} was motivated by the work of Pin
and Silva~\cite{Pin&Silva:2014} on non-commutative versions of
Mahler's theorem in $p$-adic Number Theory, which states that a
function $\mathbb{N}\to\mathbb{Z}$ is uniformly continuous with
respect to the $p$-adic metric if and only if it can be uniformly
approximated by polynomial functions.

\subsection{Profinite algebras}
\label{sec:ALCO-profinite-algebras}

This subsection is mostly based on \cite{Almeida:2002a}, where the
reader may find further details.

For a class \Cl T of topological algebras, a \emph{pro-\Cl T algebra}%
\index{algebra!pro-${\cal T}$ --} %
is a compact algebra that is residually in~\Cl T. A \emph{profinite
  algebra}%
\index{algebra!profinite --} %
is a pro-\Cl T algebra where \Cl T is the class of all finite algebras.

An \emph{inverse system} $\Cl I=(I,S_i,\varphi_{ij})$ of topological
algebras consists of a family $(S_i)_{i\in I}$ of such algebras,
indexed by a directed set $I$, together with a family
$(\varphi_{ij})_{i,j\in I; i\ge j}$ of functions,
the \emph{connecting homomorphisms}, such that the
following conditions hold:
\begin{conditionsiii}
\item each $\varphi_{ij}$ is a continuous homomorphism $S_i\to S_j$;
\item each $\varphi_{ii}$ is the identity function on~$S_i$;
\item for all $i,j,k\in I$ such that $i\ge j\ge k$, the equality
  $\varphi_{jk}\circ\varphi_{ij}=\varphi_{ik}$ holds.
\end{conditionsiii}

The \emph{inverse limit} of an inverse system $\Cl
I=(I,S_i,\varphi_{ij})$ is the subspace $\varprojlim\Cl I$ of
$\prod_{i\in I}S_i$ consisting of the families $(s_i)_{i\in I}$ such
that $\varphi_{ij}(s_i)=s_j$ whenever $i\ge j$. Note that, in case
$\varprojlim\Cl I$ is nonempty, it is a subalgebra of~$\prod_{i\in
  I}S_i$ and, therefore, a topological algebra. The inverse limit may
be empty. For instance, the inverse limit of the inverse system
$(\mathbb{N},{[}n,+\infty{[},\varphi_{nm})$ is empty, where the
intervals are viewed as semilattices under the usual ordering and with
the inclusion mappings as connecting homomorphisms $\varphi_{nm}$. In
contrast, if all the $S_i$ are compact algebras, then so
is~$\varprojlim\Cl I$ \cite[Exercise~29C]{Willard:1970}.

The following is a key property of pro-\pv V algebras for a
pseudovariety~\pv V.

\begin{proposition}
  \label{c:ALCO-finite-images-of-pro-V}
  Let \pv V be a pseudovariety, $S$ a pro-\pv V algebra, and
  $\varphi:S\to T$ a continuous homomorphism onto a finite algebra.
  Then $T$ belongs to~\pv V.
\end{proposition}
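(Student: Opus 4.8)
The plan is to show that the kernel congruence $\theta=\ker\varphi$ contains the kernel of a single continuous homomorphism from $S$ into a member of $\pv V$; the closure properties of a pseudovariety then finish the argument. Concretely, suppose I can produce a continuous homomorphism $\psi\colon S\to P$ with $P\in\pv V$ and $\ker\psi\subseteq\theta$. Then $\psi(S)=S/\ker\psi$ is a subalgebra of $P$, hence lies in $\pv V$, and since $\ker\psi\subseteq\theta$ and $\varphi$ is onto, $T\cong S/\theta$ is a homomorphic image of $\psi(S)$. As $\pv V$ is closed under subalgebras and homomorphic images, this yields $T\in\pv V$. So the whole proof reduces to building such a $\psi$.

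To build $\psi$ I would exploit compactness. Since $T$ is finite and discrete and $\varphi$ is continuous, $\theta=(\varphi\times\varphi)^{-1}(\Delta_T)$ is a clopen congruence on $S$; in particular its complement $K=(S\times S)\setminus\theta$ is a closed, hence compact, subset of the compact space $S\times S$. For each pair $(s,s')\in K$ we have $\varphi(s)\ne\varphi(s')$, so $s\ne s'$, and because $S$ is residually in $\pv V$ as a topological algebra there is a continuous homomorphism $\psi_{s,s'}\colon S\to P_{s,s'}$ with $P_{s,s'}\in\pv V$ and $\psi_{s,s'}(s)\ne\psi_{s,s'}(s')$. The disagreement set $U_{s,s'}=\{(x,y): \psi_{s,s'}(x)\ne\psi_{s,s'}(y)\}$ is open, since it is the preimage under the continuous map $\psi_{s,s'}\times\psi_{s,s'}$ of the complement of the diagonal in the discrete space $P_{s,s'}\times P_{s,s'}$, and it contains $(s,s')$.

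The sets $U_{s,s'}$ thus form an open cover of the compact set $K$, so I would extract a finite subcover $U_{s_1,s_1'},\dots,U_{s_m,s_m'}$ and take the diagonal homomorphism $\psi=(\psi_{s_1,s_1'},\dots,\psi_{s_m,s_m'})\colon S\to P$, where $P=\prod_{k=1}^m P_{s_k,s_k'}$. Being a finite product of members of $\pv V$, the algebra $P$ lies in $\pv V$, and $\psi$ is a continuous homomorphism. If $(s,s')\in K$ then $(s,s')$ lies in some $U_{s_k,s_k'}$, whence $\psi(s)\ne\psi(s')$; this says exactly that $\ker\psi\subseteq\theta$, as required.

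I expect the only real subtlety to be this compactness step: one must check that each separating homomorphism yields an open disagreement set (which uses that the target is finite and discrete) and that the complement of $\theta$ is genuinely compact (which uses that $\varphi$ is continuous with finite codomain and that $S$ is compact). The surjectivity of $\varphi$ is used once, to identify $T$ with the full quotient $S/\theta$ rather than merely a subalgebra of $T$; without it one would only conclude that the image $\varphi(S)$ belongs to $\pv V$.
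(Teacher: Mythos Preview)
The paper does not actually supply a proof of this proposition; it is stated as a ``key property'' and the reader is referred to \cite{Almeida:2002a} for details. Your argument is correct and is the standard compactness proof one finds in that literature: since $S$ is residually in~\pv V, every pair outside $\theta=\ker\varphi$ is separated by some continuous homomorphism into a member of~\pv V; the complement of~$\theta$ is compact because $\varphi$ is continuous into a discrete finite target, so finitely many such separating homomorphisms suffice; their diagonal lands in a finite product of members of~\pv V, and then closure of~\pv V under finite products, subalgebras, and homomorphic images gives $T\in\pv V$. The subtleties you flag---openness of the disagreement sets, compactness of $(S\times S)\setminus\theta$, and the role of surjectivity of~$\varphi$---are all handled correctly.
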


More generally, for a pseudoquasivariety \pv Q, the following
alternative characterizations of pro-\pv Q algebras are straighforward
extensions of the pseudovariety case for semigroups, which can be
found, for instance, in~\cite[Proposition~4.3]{Almeida:2002a}.

\begin{proposition}\label{p:pro-Q-algebras}
  Let \pv Q be a pseudoquasivariety. Then the class $\bar{\pv Q}$ of
  all pro-\pv Q algebras consists of all inverse limits of algebras
  from~\pv Q and it is the smallest class of topological algebras
  containing \pv Q that is closed under taking isomorphic algebras,
  closed subalgebras, and arbitrary direct products. The classes
  $\bar{\pv Q}$ and \pv Q have the same finite members. In case \pv Q
  is a pseudovariety, the class $\bar{\pv Q}$~is additionally closed
  under taking profinite continuous homomorphic images.
\end{proposition}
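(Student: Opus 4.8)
The plan is to establish the several claims of Proposition~\ref{p:pro-Q-algebras} in a natural order, building up from the concrete representation of pro-\pv Q algebras as inverse limits. First I would verify that every inverse limit of algebras from~\pv Q is a pro-\pv Q algebra: given an inverse system $(I,S_i,\varphi_{ij})$ with all $S_i\in\pv Q$, the inverse limit $S=\varprojlim\Cl I$ is a closed subalgebra of the compact product $\prod_{i\in I}S_i$ (each $S_i$ finite, hence compact discrete), so $S$ is compact by \cite[Exercise~29C]{Willard:1970}. To see that $S$ is residually in~\pv Q, take distinct points $s=(s_i)_i$ and $t=(t_i)_i$; they differ in some coordinate $i$, and the restriction to~$S$ of the projection $p_i:\prod_j S_j\to S_i$ is a continuous homomorphism into $S_i\in\pv Q$ separating them. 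Thus $S\in\bar{\pv Q}$.

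For the reverse inclusion, I would show that every pro-\pv Q algebra $S$ is an inverse limit of members of~\pv Q. The natural candidate system is indexed by the congruences $\theta$ on~$S$ such that $S/\theta\in\pv Q$ and the quotient map is continuous, ordered by reverse inclusion, with the obvious connecting homomorphisms $S/\theta\to S/\theta'$ for $\theta\subseteq\theta'$. One checks this index set is directed using closure of~\pv Q under finite products (given $\theta,\theta'$, the kernel of $S\to S/\theta\times S/\theta'$ works). There is a canonical continuous homomorphism $S\to\varprojlim_\theta S/\theta$; it is injective precisely because $S$ is residually in~\pv Q, and its image is dense, while compactness of~$S$ forces the image to be closed, hence the map is a surjective—therefore bijective—continuous homomorphism between compact spaces, thus a topological isomorphism. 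Combining the two inclusions identifies $\bar{\pv Q}$ with the class of all inverse limits of members of~\pv Q.

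Next I would address the closure properties. Closure under isomorphic algebras is immediate; closure under arbitrary direct products follows because a product of compact algebras is compact and a product of residually-\pv Q algebras is residually \pv Q (separate two distinct families in a coordinate where they differ, then apply a separating homomorphism on that factor composed with the projection). Closure under closed subalgebras is equally direct: a closed subspace of a compact space is compact, and residual-\pv Q-ness is inherited by restriction of separating homomorphisms. That $\bar{\pv Q}$ is the \emph{smallest} such class containing \pv Q then follows from the inverse-limit representation: any class $\Cl K\supseteq\pv Q$ closed under these three operations must contain every product $\prod_i S_i$ with $S_i\in\pv Q$ and hence every inverse limit, since the latter is a closed subalgebra of such a product. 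The assertion that $\bar{\pv Q}$ and \pv Q have the same finite members is a short argument: a finite pro-\pv Q algebra $S$ carries the discrete topology, so residual-\pv Q-ness supplies, for each pair of distinct points, a homomorphism into \pv Q separating them; taking the product of these finitely many homomorphisms gives an embedding of $S$ into a finite product of members of~\pv Q, whence $S\in\pv Q$ by closure under finite products and subalgebras.

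The step I expect to require the most care is the final claim, that when \pv Q is a pseudovariety $\bar{\pv Q}$ is closed under profinite continuous homomorphic images. Here I would exploit Proposition~\ref{c:ALCO-finite-images-of-pro-V}: if $\psi:S\to T$ is a continuous surjective homomorphism of profinite (compact, residually finite) algebras with $S\in\bar{\pv Q}$, I want $T\in\bar{\pv Q}$, i.e.\ that every continuous homomorphism from $T$ onto a finite algebra $F$ lands in~\pv Q. Composing with~$\psi$ gives a continuous homomorphism $S\to F$ onto a finite algebra; since $S$ is pro-\pv Q, Proposition~\ref{c:ALCO-finite-images-of-pro-V} yields $F\in\pv Q$. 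As $T$ is profinite it is the inverse limit of its finite continuous quotients, all of which now lie in~\pv Q, so $T$ is an inverse limit of members of~\pv Q and therefore $T\in\bar{\pv Q}$. The delicate point, and where I would focus attention, is ensuring that the pseudovariety hypothesis (as opposed to merely pseudoquasivariety) is genuinely used—it is exactly what makes Proposition~\ref{c:ALCO-finite-images-of-pro-V} available, since closure under homomorphic images is needed to conclude that the finite quotient $F$ belongs to~\pv Q rather than merely embedding into a member of~\pv Q.
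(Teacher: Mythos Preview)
Your proposal is correct and follows the standard route. Note that the paper does not actually supply its own proof of this proposition: it states the result and remarks that these characterizations are ``straightforward extensions of the pseudovariety case for semigroups, which can be found, for instance, in~\cite[Proposition~4.3]{Almeida:2002a}.'' Your argument is precisely the expected one underlying that reference---representing a pro-\pv Q algebra as the inverse limit of its continuous finite-index \pv Q-quotients, verifying the closure properties directly from the definition of residually-\pv Q plus compactness, and invoking Proposition~\ref{c:ALCO-finite-images-of-pro-V} for the pseudovariety case---so there is nothing to contrast.
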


Since every compact metric space is a continuous image of the Cantor
set~\cite[Theorem~30.7]{Willard:1970}, the profiniteness assumption in
the second part of Proposition~\ref{p:pro-Q-algebras} cannot be
dropped.

The nontrivial parts of the next theorem were first observed
in~\cite{Almeida&Weil:1994} to follow from the arguments
in~\cite{Almeida:1989b}, which in turn extend the case of semigroups,
due to Numakura~\cite{Numakura:1957}, through the approach of
Hunter~\cite{Hunter:1988}. The key ingredient is the following lemma,
first stated explicitly and proved by
Hunter~\cite[Lemma~4]{Hunter:1988} for semigroups although, in this
case, it can also be extracted from~\cite{Numakura:1957}.

\begin{lemma}\label{l:ALCO-Hunter}
  Let $S$ be a compact zero-dimensional algebra and let $L$ be a
  subset of~$S$ for which the syntactic congruence is determined by
  finitely many terms. Then $L$ is recognizable if and only if $L$ is
  clopen.
\end{lemma}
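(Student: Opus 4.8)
The statement is an equivalence, and I would prove the two implications separately; essentially all the work lies in the direction \emph{clopen implies recognizable}. The converse direction, \emph{recognizable implies clopen}, is immediate and uses neither the finiteness hypothesis nor the topological assumptions on $S$: if $\varphi\colon S\to P$ is a continuous homomorphism onto a finite (hence discrete) algebra with $L=\varphi^{-1}\varphi L$, then $\varphi L$ is clopen in $P$, so $L=\varphi^{-1}(\varphi L)$ is clopen by continuity of $\varphi$.

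For the main direction I would first fix notation for the defining terms. Let $t_1,\dots,t_k$, with $t_j\in T_{n_j}$, be terms witnessing that $\sim_L$ is determined by finitely many terms, so that $s\sim_L s'$ holds if and only if $(t_j)_S(s,s_2,\dots,s_{n_j})\in L \Leftrightarrow (t_j)_S(s',s_2,\dots,s_{n_j})\in L$ for every $j$ and all $s_2,\dots,s_{n_j}\in S$. Call a pair $c=(t_j,\bar s)$ with $\bar s=(s_2,\dots,s_{n_j})\in S^{\,n_j-1}$ a \emph{context}, and write $c[x]=(t_j)_S(x,\bar s)$. Since term functions are continuous (being built from the continuous basic operations and projections by composition), each map $c[\cdot]\colon S\to S$ is continuous, and hence $L_c:=\{x\in S: c[x]\in L\}$ is clopen. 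By the choice of the $t_j$, we have $x\sim_L y$ if and only if $x$ and $y$ lie on the same side of $L_c$ for every context $c$.

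The heart of the argument, and the step I expect to be the main obstacle, is to show that only finitely many distinct sets $L_c$ occur, although there are in general infinitely many contexts. Fix $j$ and consider the continuous term function $(t_j)_S\colon S^{n_j}\to S$; then $W_j:=(t_j)_S^{-1}(L)$ is a clopen subset of the compact, zero-dimensional space $S^{n_j}$. Because $S$ is zero-dimensional, the products of clopen subsets of $S$ form a clopen basis of $S^{n_j}$; since $W_j$ is open it is a union of such basic clopen rectangles contained in it, and since $W_j$ is also closed in the compact space $S^{n_j}$, finitely many of them suffice. Thus $W_j=\bigcup_{l=1}^{N_j} A_l\times B_l$ with each $A_l\subseteq S$ (the first coordinate) and each $B_l\subseteq S^{\,n_j-1}$ (the remaining coordinates) clopen. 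For a context $c=(t_j,\bar s)$ this yields $L_c=\bigcup\{A_l:\bar s\in B_l\}$, so $L_c$ depends on $\bar s$ only through the subset $\{l:\bar s\in B_l\}$ of $\{1,\dots,N_j\}$; hence $t_j$ contributes at most $2^{N_j}$ distinct sets $L_c$. As there are finitely many terms, the whole family of context languages consists of finitely many clopen sets $K_1,\dots,K_M$.

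It then follows that $x\sim_L y$ if and only if $x$ and $y$ have the same membership pattern among $K_1,\dots,K_M$, so $\sim_L$ is the partition of $S$ generated by these finitely many clopen sets. Consequently $\sim_L$ has finite index and each of its classes, being a Boolean combination of the $K_m$, is clopen; therefore $S/{\sim_L}$ is a finite algebra and the natural homomorphism $S\to S/{\sim_L}$ is continuous, since the preimage of each point is clopen. As $L$ is a union of $\sim_L$-classes, this continuous homomorphism recognizes $L$, so $L$ is recognizable. All three hypotheses enter precisely in the crux: zero-dimensionality and compactness to decompose each $W_j$ into finitely many clopen rectangles, and finiteness of the defining terms to bound the number of relevant term functions; dropping the last hypothesis allows infinitely many classes, and dropping compactness already fails for the discrete pro-\pv N topology on $A^+$, where every set is clopen but only the finite and cofinite languages are recognizable.
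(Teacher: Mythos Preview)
Your argument is correct. The paper does not include its own proof of this lemma; it attributes the semigroup case to Hunter and refers to Clark, Davey, Freese, and Jackson for an explicit proof in the general setting. Your approach---pulling $L$ back through each term function to get clopen sets $W_j\subseteq S^{n_j}$, using compactness and zero-dimensionality to write each $W_j$ as a finite union of clopen rectangles, and then observing that each context language $L_c$ is determined by a subset of the rectangle indices---is essentially the standard argument in those references, so there is no meaningful divergence to discuss.

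One small inaccuracy in your closing remarks: in the discrete semigroup $A^+$, the \emph{recognizable} subsets in the sense of the lemma (continuous homomorphism into any finite semigroup) are exactly the regular languages, not the finite/cofinite ones; the latter are the $\pv N$-recognizable subsets. Your example still works to show compactness is essential, since not every subset of $A^+$ is regular, but the reason you give is off.
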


 The reader may wish to compare Lemma~\ref{l:ALCO-Hunter} with
 Proposition~\ref{p:ALCO-pro-Q-uniformity}\eqref{item:ALCO-pro-Q-uniformity-3}
 and the subsequent comments.

\begin{theorem}\label{t:ALCO-profiniteness}
  Let $S$ be a compact algebra and consider the following conditions:
  \begin{enumerate}
  \item\label{item:ALCO-profiniteness-1} $S$ is profinite;
  \item\label{item:ALCO-profiniteness-2} $S$ is an inverse limit of an
    inverse system of finite algebras;
  \item\label{item:ALCO-profiniteness-3} $S$ is isomorphic to a closed
    subalgebra of a direct product of finite algebras;
  \item\label{item:ALCO-profiniteness-4} $S$ is a compact
    zero-dimensional algebra.
  \end{enumerate}
  Then the implications
  $\eqref{item:ALCO-profiniteness-1}
  \Leftrightarrow
  \eqref{item:ALCO-profiniteness-2}
  \Leftrightarrow
  \eqref{item:ALCO-profiniteness-3}
  \Rightarrow
  \eqref{item:ALCO-profiniteness-4}$ always
  hold, while
  $\eqref{item:ALCO-profiniteness-4}
  \Rightarrow
  \eqref{item:ALCO-profiniteness-3}$
  also holds in case the syntactic
  congruence of~$S$ is determined by a finite number of terms.
\end{theorem}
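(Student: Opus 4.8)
The plan is to prove the cycle $(1) \Leftrightarrow (2) \Leftrightarrow (3)$ directly from Proposition~\ref{p:pro-Q-algebras}, to dispatch $(3) \Rightarrow (4)$ by elementary topology, and to reserve the real work for $(4) \Rightarrow (3)$, where Lemma~\ref{l:ALCO-Hunter} is the decisive tool. Throughout I take $\pv Q$ to be the pseudoquasivariety of all finite algebras, so that pro-$\pv Q$ algebras are exactly the profinite ones and the class $\bar{\pv Q}$ of Proposition~\ref{p:pro-Q-algebras} is the class in condition~(1). That proposition identifies $\bar{\pv Q}$ with the class of all inverse limits of finite algebras, which is precisely $(1) \Leftrightarrow (2)$. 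For $(2) \Rightarrow (3)$ I would note that an inverse limit of finite algebras is by construction a subalgebra of the product $\prod_i S_i$, and that it is compact (as recalled in the excerpt, since every $S_i$ is compact) and therefore closed in the Hausdorff product; thus it realizes $S$ as a closed subalgebra of a product of finite algebras. For $(3) \Rightarrow (1)$ I would use the closure properties in the same proposition: $\bar{\pv Q}$ contains all finite algebras and is closed under arbitrary direct products and closed subalgebras, so any closed subalgebra of a product of finite algebras lies in $\bar{\pv Q}$, that is, is profinite. This completes the cycle.

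The implication $(3) \Rightarrow (4)$ is purely topological: a finite discrete algebra is compact and zero-dimensional, an arbitrary product of compact zero-dimensional spaces is again compact (Tychonoff) and zero-dimensional (the finite intersections of clopen cylinders form a clopen basis), and every closed subspace inherits both properties. Hence any closed subalgebra of a product of finite algebras is compact zero-dimensional.

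The substantive implication is $(4) \Rightarrow (3)$ under the hypothesis that the syntactic congruence is determined by finitely many terms. I would prove it by showing that $S$ is residually finite as a topological algebra, which is condition~(1) and hence yields~(3) through the equivalence already established. Given distinct points $s, t \in S$, zero-dimensionality and the Hausdorff property furnish a clopen set $L$ with $s \in L$ and $t \notin L$. By the finiteness hypothesis, Lemma~\ref{l:ALCO-Hunter} applies to $L$ and shows that $L$ is recognizable, so there is a continuous homomorphism $\varphi \colon S \to P$ onto a finite algebra with $L = \varphi^{-1}\varphi(L)$. Since $L$ is a union of $\ker\varphi$-classes separating $s$ from $t$, necessarily $\varphi(s) \neq \varphi(t)$. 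Thus finite continuous quotients of $S$ separate points, so $S$ is profinite. The main obstacle is exactly this step: converting the topological separation delivered by clopen sets into algebraic separation by finite continuous homomorphisms, which is possible only because Hunter's Lemma upgrades ``clopen'' to ``recognizable'' under the finiteness assumption on syntactic congruences. Everything else reduces to bookkeeping with the cited results.
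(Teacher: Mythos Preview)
Your proposal is correct and matches the approach the paper indicates: the paper does not spell out a proof of Theorem~\ref{t:ALCO-profiniteness} but points to Proposition~\ref{p:pro-Q-algebras} for the equivalence of (1)--(3) and singles out Lemma~\ref{l:ALCO-Hunter} as the ``key ingredient'' for $(4)\Rightarrow(3)$, referring to \cite{Clark&Davey&Freese&Jackson:2004} for explicit details. Your argument is exactly this outline fleshed out, including the standard reduction of $(4)\Rightarrow(1)$ via separating clopen sets upgraded to recognizable sets by Hunter's Lemma.
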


One can find in~\cite{Clark&Davey&Freese&Jackson:2004} explicit proofs
of Lemma~\ref{l:ALCO-Hunter} and Theorem~\ref{t:ALCO-profiniteness}.
As mentioned in Section~\ref{sec:ALCO-general-algebras}, the same
paper provides characterizations of the finiteness assumption in
Theorem~\ref{t:ALCO-profiniteness}. In particular, compact
zero-dimensional semigroups, monoids, groups, rings, and lattices in
finitely generated varieties of lattices are profinite.

The finitely generated case of the following variant of
Lemma~\ref{l:ALCO-Hunter} can be found in~\cite{Almeida:1989b}.
The essential step for the proof of the general case can be found
in~\cite[Lemma~4.1]{Almeida:2002a}.

\begin{proposition}\label{p:ALCO-recognizable-vs-clopen-in-profinite}
  Let \pv Q be a pseudoquasivariety and let $S$ be a pro-\pv Q
  algebra. Then a subset $L$ of~$S$ is clopen if and only if it is \pv
  Q-recognizable, if and only if it is recognizable.
  In particular, the topology of~$S$ is the smallest topology for
  which all continuous homomorphisms from $S$ into algebras from~\pv Q
  (or, alternatively, into finite algebras) are continuous with
  respect to it. Hence, a topological algebra is a pro-\pv Q algebra
  if and only if it is compact and its topology coincides with its
  pro-\pv Q topology.
\end{proposition}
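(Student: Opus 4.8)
The plan is to reduce everything to a single structural fact: on a pro-\pv Q algebra $S$ the ambient topology already coincides with its pro-\pv Q topology. Write $\tau$ for the given topology of~$S$ and $\tau_\pv Q$ for its pro-\pv Q topology. By Proposition~\ref{p:ALCO-pro-Q-uniformity-vs-algebra}\eqref{i:ALCO-pro-Q-uniformity-vs-algebra-2}, $\tau_\pv Q$ is the coarsest topology making every $\tau$-continuous homomorphism into a member of~\pv Q continuous; since $\tau$ itself has this property, $\tau_\pv Q\subseteq\tau$. On the other hand, because $S$ is residually in~\pv Q, the characterization of when the pro-\pv Q uniformity is Hausdorff shows that $\tau_\pv Q$ is Hausdorff. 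The identity map $(S,\tau)\to(S,\tau_\pv Q)$ is therefore a continuous bijection from a compact space onto a Hausdorff space, hence a homeomorphism, so $\tau=\tau_\pv Q$. I expect this opening identification to carry the real weight of the argument, since it is exactly where compactness of~$S$ and its residual structure are used in tandem.

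Granting $\tau=\tau_\pv Q$, I would prove the triple equivalence by a short cycle of implications. First, \emph{recognizable $\Rightarrow$ clopen}: if $L=\varphi^{-1}\varphi L$ for a continuous homomorphism $\varphi\colon S\to P$ into a finite (hence discrete) algebra, then $\varphi L$ is clopen in~$P$ and $L$ is its preimage, so $L$ is clopen. Second, \emph{\pv Q-recognizable $\Rightarrow$ recognizable} is immediate, as members of~\pv Q are finite. Third, \emph{clopen $\Rightarrow$ \pv Q-recognizable}, which is the substantive direction: a clopen $L$ is $\tau_\pv Q$-open, so by Proposition~\ref{p:ALCO-pro-Q-uniformity}\eqref{item:ALCO-pro-Q-uniformity-3} it is a union $L=\bigcup_i R_i$ of \pv Q-recognizable sets $R_i\subseteq L$; each $R_i$ is \pv Q-clopen, thus $\tau$-open, while $L$ is $\tau$-closed in the compact space~$S$, so a finite subfamily already covers~$L$ and $L=R_{i_1}\cup\cdots\cup R_{i_k}$. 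A finite union of \pv Q-recognizable sets is again \pv Q-recognizable: if $R_j=\varphi_j^{-1}A_j$ with $\varphi_j\colon S\to P_j\in\pv Q$ continuous, the diagonal homomorphism into $\prod_j P_j\in\pv Q$ recognizes the union, using closure of~\pv Q under finite direct products. This closes the cycle.

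The ``in particular'' clause then follows by combining $\tau=\tau_\pv Q$ with Proposition~\ref{p:ALCO-pro-Q-uniformity-vs-algebra}\eqref{i:ALCO-pro-Q-uniformity-vs-algebra-2}: the latter exhibits $\tau_\pv Q$ as the smallest topology rendering the $\tau$-continuous homomorphisms into~\pv Q continuous, and the equality transports this minimality to~$\tau$. For the alternative ``into finite algebras'' formulation, I would observe that the smallest topology making all $\tau$-continuous homomorphisms into finite algebras continuous is generated by the recognizable subsets of~$S$; since the triple equivalence identifies these with the \pv Q-recognizable sets, which by Proposition~\ref{p:ALCO-pro-Q-uniformity}\eqref{item:ALCO-pro-Q-uniformity-3} form a basis of $\tau_\pv Q=\tau$, both extremal topologies agree with~$\tau$.

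For the concluding characterization, one implication is immediate: a pro-\pv Q algebra is compact by definition and satisfies $\tau=\tau_\pv Q$ by the opening step. Conversely, if a topological algebra~$S$ is compact and its topology equals its pro-\pv Q topology, then $\tau_\pv Q=\tau$ is compact, hence Hausdorff; by the characterization of when the pro-\pv Q uniformity is Hausdorff, $S$ is residually in~\pv Q, so together with compactness it is a pro-\pv Q algebra. Apart from the opening identification $\tau=\tau_\pv Q$, everything reduces to bookkeeping with recognizable sets, compactness, and closure of~\pv Q under finite products.
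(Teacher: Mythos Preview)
Your argument is correct. The opening identification $\tau=\tau_\pv Q$ via the compact-to-Hausdorff bijection is sound, and the cycle of implications goes through cleanly; in particular, your use of compactness to extract a finite subcover from the cover of a clopen set by \pv Q-recognizable sets (supplied by Proposition~\ref{p:ALCO-pro-Q-uniformity}\eqref{item:ALCO-pro-Q-uniformity-3}) is the crux, and closure of~\pv Q under finite products then finishes that step. The two remaining clauses are handled correctly as well.

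The paper does not give an explicit proof but refers to a variant of Hunter's Lemma (Lemma~\ref{l:ALCO-Hunter}) and to \cite[Lemma~4.1]{Almeida:2002a} for the essential step in the general (not finitely generated) case. That route attacks ``clopen $\Rightarrow$ recognizable'' directly: one shows that every clopen partition of a pro-\pv Q algebra is refined by the kernel of some continuous homomorphism onto a member of~\pv Q, using the inverse-limit description of~$S$ rather than passing through $\tau=\tau_\pv Q$ first. Your organization is different: you front-load the topological identification $\tau=\tau_\pv Q$ and then harvest the equivalences from results already on record (Propositions~\ref{p:ALCO-pro-Q-uniformity} and~\ref{p:ALCO-pro-Q-uniformity-vs-algebra}), so that no new analysis of congruences or syntactic structure is needed. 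This buys you a shorter, more self-contained argument within the paper's framework; the referenced approach, by contrast, isolates a reusable lemma about clopen congruences that has independent interest beyond this proposition.
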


A way of constructing profinite algebras is via the Hausdorff
completion of an arbitrary topological algebra $S$ with respect to its
pro-\pv Q uniformity. We denote this completion by~\compl QS. The
next result can be easily deduced from
Propositions~\ref{p:ALCO-extensions-to-completions},
\ref{p:ALCO-pro-Q-uniformity},
and~\ref{p:ALCO-pro-Q-uniformity-vs-algebra}.

\begin{proposition}\label{p:ALCO-pro-Q-completion-is-pro-Q}
  Let $S$ be a topological algebra and \pv Q a pseudoquasivariety.
  Then \compl QS~is a pro-\pv Q algebra. Moreover, if $S$ is
  residually in~\pv Q, then the topology of~$S$ coincides with the
  induced topology as a subspace of~\compl QS.
\end{proposition}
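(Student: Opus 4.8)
The plan is to verify the two assertions separately, in each case reducing to the cited propositions together with the standard properties of Hausdorff completions recalled in Section~\ref{sec:ALCO-unif-spaces}.

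First I would show that \compl QS is a pro-\pv Q algebra, that is, a compact algebra that is residually in~\pv Q. Compactness of the underlying space is immediate from Proposition~\ref{p:ALCO-pro-Q-uniformity}\eqref{item:ALCO-pro-Q-uniformity-1}. To endow \compl QS with an algebra structure, recall from Proposition~\ref{p:ALCO-pro-Q-uniformity-vs-algebra}\eqref{i:ALCO-pro-Q-uniformity-vs-algebra-3} that $S$ is a uniform algebra for $\unif U_\pv Q$, so each basic operation $f^S\colon S^{\nu(f)}\to S$ is uniformly continuous. Since completion and Hausdorffization commute with finite products, the completion of $S^{\nu(f)}$ is $(\compl QS)^{\nu(f)}$, and Proposition~\ref{p:ALCO-extensions-to-completions} yields a unique uniformly continuous extension $f^{\compl QS}\colon(\compl QS)^{\nu(f)}\to\compl QS$. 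These extensions make \compl QS a compact topological algebra in which the image of $S$ is a dense subalgebra.

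For residuality, note that every congruence $\theta$ in the defining basis of $\unif U_\pv Q$ gives a uniformly continuous quotient homomorphism $S\to S/\theta$ into a member of~\pv Q; since $S/\theta$ is finite, hence complete, Proposition~\ref{p:ALCO-extensions-to-completions} extends it to a continuous map $\hat\pi_\theta\colon\compl QS\to S/\theta$, which is a homomorphism because both sides of each defining equality are continuous and agree on the dense subset~$S$ (the finite algebra $S/\theta$ being Hausdorff). The uniformity of the compact space \compl QS is generated by the extensions of these $\theta$, so its intersection is the diagonal; hence for any two distinct points $x,y\in\compl QS$ there is some $\theta$ with $\hat\pi_\theta(x)\ne\hat\pi_\theta(y)$. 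Thus \compl QS is residually in~\pv Q, completing the first part.

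For the second assertion, the hypothesis that $S$ is residually in~\pv Q means precisely that $\unif U_\pv Q$ is Hausdorff, as observed in Section~\ref{sec:ALCO-prof-unif-metr}; consequently the natural Hausdorffization mapping is injective and the canonical map $S\to\compl QS$ is a uniform embedding of $S$ as a dense subspace. A uniform embedding carries the pro-\pv Q uniformity of $S$ exactly to the relative uniformity inherited from \compl QS, so the relative topology on $S$ inherited from \compl QS is exactly the pro-\pv Q topology of~$S$, which is the content of the second assertion. The one point to handle with care throughout is the compatibility between the completion, viewed as a space, and the extended operations---namely that the extensions genuinely define an algebra and that the $\hat\pi_\theta$ are homomorphisms---but this is routine once density of $S$ and uniform continuity of the operations are invoked, and is the only place where anything beyond the cited propositions is used.
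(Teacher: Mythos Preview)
Your proof is correct and is exactly the argument the paper has in mind: it explicitly says the result ``can be easily deduced from Propositions~\ref{p:ALCO-extensions-to-completions}, \ref{p:ALCO-pro-Q-uniformity}, and~\ref{p:ALCO-pro-Q-uniformity-vs-algebra},'' and you invoke precisely these three results (together with the standard facts on Hausdorff completions and products recalled in Section~\ref{sec:ALCO-unif-spaces}) in the expected way.
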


It is important to keep in mind that the topology of a pro-\pv Q
algebra $S$ may not be its pro-\pv Q topology when $S$ is viewed as a
discrete algebra. To give an example, we introduce a pseudovariety
which is central in the theory of finite semigroups: the class \pv A%
\index{$\mathsf{A}$} %
of all finite \emph{aperiodic}%
\index{aperiodic semigroup}%
\index{semigroup!aperiodic --} %
semigroups whose subgroups are trivial.

\begin{example}\label{eg:ALCO-topology-of-pro-Q-algebra-not-pro-Q-topology}
  Let $\mathbb{N}$ be the discrete additive semigroup of natural
  numbers and consider its pro-\pv A completion \compl A{\mathbb{N}},
  which is obtained by adding one point, denote it $\infty$, which is
  such that $n+\infty=\infty+n=\infty$ and $\lim n=\infty$. Then the
  mapping that sends natural numbers to~1 and $\infty$ to~0 is a
  homomorphism into the semilattice $\{0,1\}$ which is not continuous
  for the topology of~\compl A{\mathbb{N}} but which is continuous for
  the pro-\pv A topology.
\end{example}

In contrast, it is a deep and difficult result that, for every
finitely generated profinite group, its topology coincides with its
profinite topology as a discrete group \cite{Nikolov&Segal:2003}. The
proof of this result depends on the classification of finite simple
groups.

The \pv Q-recognizable subsets of an algebra $S$ constitute a
subalgebra $\Cl P_\pv Q(S)$ of the Boolean algebra $\Cl P(S)$ of all
its subsets. On the other hand, a compact zero-dimensional space is
also known as a \emph{Boolean space}%
\index{Boolean space}. %
The two types of Boolean structures are linked through \emph{Stone
  duality} (cf.~\cite[Section~IV.4]{Burris&Sankappanavar:1981}), whose
easily described direction associates with a Boolean space its Boolean
algebra of clopen subsets; every Boolean algebra is obtained
in this way. The following result shows that the Boolean space \compl
QS and the Boolean algebra $\Cl P_\pv Q(S)$ are Stone duals. In it, we
adopt a convenient abuse of notation: for the natural mapping
$\iota:S\to\compl QS$ and a subset $K$ of~\compl QS, we write $K\cap
S$ for $\iota^{-1}K$, while, for a subset $L$ of~$S$, we write
$\overline{L}$ for the closure of $\iota L$ in~\compl QS.

\begin{theorem}\label{t:ALCO-Q-recognizability-in-Q-completion}
  Let \pv Q be a pseudoquasivariety and let $S$~be an arbitrary
  topological algebra. Then the following are equivalent for a subset
  $L$ of~$S$:
  \begin{enumerate}
  \item\label{item:ALCO-Q-recognizability-in-Q-completion-1} the set
    $L$ is \pv Q-recognizable;
  \item\label{item:ALCO-Q-recognizability-in-Q-completion-2} the set
    $L$ is of the form $K\cap S$ for some clopen subset $K$ of~\compl
    QS;
  \item\label{item:ALCO-Q-recognizability-in-Q-completion-3} the set
    $\overline{L}$ is open and $\overline{L}\cap S=L$.
  \end{enumerate}
  When the pro-\pv Q topology of~$S$ is discrete, a further
  equivalent condition is that $\overline{L}$ is open.
  Moreover, the clopen sets of the form $\overline{L}$ with $L$ a \pv
  Q-recognizable subset of~$S$ form a basis of the pro-\pv Q topology
  of~$S$.
\end{theorem}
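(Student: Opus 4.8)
The plan is to establish the chain $(1)\Leftrightarrow(2)\Leftrightarrow(3)$ and then treat the discrete case and the final basis statement separately. Write $\iota\colon S\to\compl QS$ for the natural map into the Hausdorff completion for~$\unif U_\pv Q$; by Proposition~\ref{p:ALCO-pro-Q-completion-is-pro-Q} the algebra \compl QS is pro-\pv Q, hence compact and zero-dimensional, and $\iota S$ is dense in it. For $(1)\Rightarrow(2)$ I start from a continuous homomorphism $\varphi\colon S\to P$ with $P\in\pv Q$ and $L=\varphi^{-1}\varphi L$. Being a continuous homomorphism into a member of~\pv Q, $\varphi$ is uniformly continuous for~$\unif U_\pv Q$ by Proposition~\ref{p:ALCO-pro-Q-uniformity-vs-algebra}, so Proposition~\ref{p:ALCO-extensions-to-completions} extends it to a continuous homomorphism $\hat\varphi\colon\compl QS\to P$ with $\hat\varphi\circ\iota=\varphi$. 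Since $P$ is finite and discrete, $\varphi L$ is clopen, so $K:=\hat\varphi^{-1}(\varphi L)$ is clopen in \compl QS and $K\cap S=\iota^{-1}\hat\varphi^{-1}(\varphi L)=\varphi^{-1}(\varphi L)=L$. Conversely, for $(2)\Rightarrow(1)$, given a clopen $K$ with $L=K\cap S$, Proposition~\ref{p:ALCO-recognizable-vs-clopen-in-profinite} applied to the pro-\pv Q algebra \compl QS yields a continuous homomorphism $\psi\colon\compl QS\to P\in\pv Q$ with $K=\psi^{-1}\psi K$; then $\varphi:=\psi\circ\iota$ is a continuous homomorphism and $\varphi^{-1}(\psi K)=\iota^{-1}K=L$, from which $\varphi L\subseteq\psi K$ gives $L\subseteq\varphi^{-1}\varphi L\subseteq\varphi^{-1}\psi K=L$, so $L$ is \pv Q-recognizable.

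The equivalence $(2)\Leftrightarrow(3)$ is where the density of $\iota S$ is used. If $\overline L$ is open with $\overline L\cap S=L$, then $\overline L$ is clopen, being closed in the compact space \compl QS, and $K=\overline L$ witnesses~(2). Conversely, suppose $L=K\cap S$ with $K$ clopen. A point $\iota(s)$ lies in $K$ exactly when $s\in\iota^{-1}K=L$, so $\iota L=\iota S\cap K$; since $\iota S$ is dense and $K$ is open, $\iota S\cap K$ is dense in~$K$, and as $K$ is also closed the closure of $\iota L$ in \compl QS is precisely~$K$. Hence $\overline L=K$ is clopen and $\overline L\cap S=\iota^{-1}K=L$, which is~(3).

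For the discrete case I only need to derive $\overline L\cap S=L$ from the openness of~$\overline L$ alone. A discrete pro-\pv Q topology is Hausdorff, so $\unif U_\pv Q$ is Hausdorff and $\iota$ is injective with every $\iota(s)$ isolated in~$\iota S$; picking an open $U_s$ with $U_s\cap\iota S=\{\iota(s)\}$, any $s$ with $\iota(s)\in\overline L=\overline{\iota L}$ satisfies $U_s\cap\iota L\neq\emptyset$, forcing $\iota(s)\in\iota L$ and thus $s\in L$. For the final assertion, $(1)\Leftrightarrow(2)\Leftrightarrow(3)$ shows that every clopen $K\subseteq\compl QS$ equals $\overline L$ for the \pv Q-recognizable set $L=K\cap S$; thus the sets $\overline L$ are exactly the clopen subsets of the zero-dimensional space \compl QS, hence a basis of its topology, and tracing them along~$\iota$ recovers the \pv Q-recognizable sets, which form a basis of the pro-\pv Q topology of~$S$ by Proposition~\ref{p:ALCO-pro-Q-uniformity}.

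The step I expect to be most delicate is the identification $\overline L=K$ in $(2)\Leftrightarrow(3)$: one must argue density of $\iota L$ inside the open set~$K$ rather than in all of \compl QS, and phrase everything through $\iota^{-1}$ so the argument remains valid when $S$ is not residually in~\pv Q and $\iota$ is not injective. The extension in $(1)\Rightarrow(2)$ is routine modulo correctly invoking the universal property of the Hausdorff completion for uniformly continuous maps into finite algebras.
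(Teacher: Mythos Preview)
Your proof is correct. The paper states Theorem~\ref{t:ALCO-Q-recognizability-in-Q-completion} without proof, so there is no argument to compare against; your approach is exactly the natural one suggested by the surrounding results (Propositions~\ref{p:ALCO-pro-Q-uniformity}, \ref{p:ALCO-pro-Q-uniformity-vs-algebra}, \ref{p:ALCO-extensions-to-completions}, \ref{p:ALCO-recognizable-vs-clopen-in-profinite}, and~\ref{p:ALCO-pro-Q-completion-is-pro-Q}), and the key density step $\overline{L}=K$ via $\iota L=\iota S\cap K$ is handled correctly even when $\iota$ fails to be injective.

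One small remark on the last clause: the paper's phrase ``form a basis of the pro-\pv Q topology of~$S$'' is ambiguous as written, since the sets $\overline{L}$ live in \compl QS rather than in~$S$; your reading that these are exactly the clopen subsets of \compl QS (hence a basis there, with traces along~$\iota$ giving the \pv Q-recognizable basis of~$S$) is the sensible one and is consistent with the Stone-duality framing in the paragraph preceding the theorem.
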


Since \compl QS has further structure involved besides its
topology, which is the sole to intervene in Stone duality, one may ask
what further structure is reflected in the Boolean algebra. This
question has been investigated
in~\cite{Gehrke&Grigorieff&Pin:2008,Gehrke&Grigorieff&Pin:2010}, in
the context of the theory of semigroups and its connections with
regular languages.

For a topological algebra $S$, we denote by \End S%
\index{\End S} %
the monoid of
continuous endomorphisms of $S$. It can be viewed as a subspace of the
product space $S^S$, that is with the \emph{pointwise convergence
  topology}%
\index{topology!pointwise convergence --}. %
A classical alternative is the \emph{compact-open topology}
\index{topology!compact-open --}, %
for which a basis consists of all sets of the form $(K,U)$, which in
turn consist of all self maps $\varphi$ of~$S$ such that
$\varphi(K)\subseteq U$, where $K$~is compact and $U$~is open. These
two topologies on a space of self maps of $S$ in general do not
coincide. However, for finitely generated profinite algebras they
coincide on~\End S.
This was first proved by Hunter~\cite[Proposition~1]{Hunter:1983} and
rediscovered by the first author
\cite[Theorem~4.14]{Almeida:2003cshort} in the context of profinite
semigroups. Steinberg~\cite{Steinberg:2010pm} showed how this is
related with the classical theorem of Ascoli on function spaces. The
proofs extend easily to an arbitrary algebraic setting.

\begin{theorem}\label{t:ALCO-End-profinite}
  For a finitely generated profinite algebra $S$, the
  pointwise convergence and compact-open topologies coincide on~\End S
  and turn it into a profinite monoid such that the evaluation mapping
  $\End S\times S\to S$, sending $(\varphi,s)$ to~$\varphi(s)$, is
  continuous.
\end{theorem}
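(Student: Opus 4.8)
The plan is to reduce the whole statement to a single equicontinuity property of the family \End S: once that is in hand, the coincidence of the two topologies, the compactness of \End S, and the continuity of evaluation all follow from classical function-space topology, while profiniteness follows from Theorem~\ref{t:ALCO-profiniteness}. First I would fix a finite subset $A\subseteq S$ generating a dense subalgebra, which exists because $S$ is finitely generated. Being profinite, $S$ is compact and zero-dimensional and carries a unique compatible uniformity admitting a basis of clopen congruences $\theta$ with $S/\theta$ finite. Since $A$ generates a dense subalgebra and finite algebras are Hausdorff, any two continuous homomorphisms out of $S$ that agree on $A$ coincide; hence the restriction map $\End S\to S^A$, $\varphi\mapsto(\varphi(a))_{a\in A}$, is injective and, for each finite algebra $F$, there are at most $|F|^{|A|}$ continuous homomorphisms $S\to F$, so only finitely many.

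The crux, and the only place where finite generation is genuinely used, is to prove that \End S is equicontinuous. Let $\theta$ be a clopen congruence with $F:=S/\theta$ finite and let $q\colon S\to F$ be the quotient; since such $\theta$ form a basis of the uniformity of $S$, it suffices to produce an entourage $\theta'$ with $(\varphi(s),\varphi(t))\in\theta$ for every $\varphi\in\End S$ whenever $(s,t)\in\theta'$. For each $\varphi\in\End S$ the composite $q\circ\varphi$ is a continuous homomorphism $S\to F$, and by the previous paragraph there are only finitely many such homomorphisms; let $\psi_1,\dots,\psi_m$ list those of the form $q\circ\varphi$. Then $\theta':=\bigcap_{k=1}^{m}\ker\psi_k$ is a finite intersection of clopen congruences, hence again an entourage, and if $(s,t)\in\theta'$ and $q\circ\varphi=\psi_k$ then $q(\varphi(s))=\psi_k(s)=\psi_k(t)=q(\varphi(t))$, that is $(\varphi(s),\varphi(t))\in\theta$. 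This is exactly equicontinuity; I expect it to be the main obstacle, everything else being soft.

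With equicontinuity established, the coincidence of the topology of pointwise convergence and the compact-open topology on \End S is a standard consequence of the Ascoli machinery (cf.~\cite{Willard:1970}). For compactness, observe that $S^S$ is compact by Tychonoff's theorem and zero-dimensional, being a power of the compact zero-dimensional space $S$. A pointwise limit of homomorphisms is again a homomorphism, because each basic operation $f^S$ is continuous and so commutes with pointwise limits; and such a limit is continuous because the pointwise closure of an equicontinuous family is equicontinuous. Hence \End S is closed in $S^S$, therefore compact, and zero-dimensional as a subspace of $S^S$.

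It remains to see that \End S is a topological monoid with continuous evaluation. Continuity of the evaluation map $\End S\times S\to S$, $(\varphi,s)\mapsto\varphi(s)$, follows from the equicontinuity of \End S (together with the compactness, hence local compactness, of $S$); this is the usual joint continuity of evaluation for the compact-open topology over a locally compact space. Composition of continuous endomorphisms is a continuous endomorphism and the identity is one, so \End S is a monoid, and its multiplication is continuous: continuity into the pointwise topology may be checked coordinatewise, and for fixed $s$ the map $(\varphi,\psi)\mapsto\varphi(\psi(s))$ is the composite of the continuous coordinate projection $\psi\mapsto\psi(s)$ with the continuous evaluation map. Thus \End S is a compact zero-dimensional topological monoid, and since the syntactic congruence of a monoid is determined by finitely many terms, Theorem~\ref{t:ALCO-profiniteness} shows that \End S is profinite, completing the proof.
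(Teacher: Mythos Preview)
Your argument is correct and follows precisely the Ascoli-based strategy the paper attributes to Steinberg~\cite{Steinberg:2010pm}: the key step is the equicontinuity of \End S, which you obtain from the finiteness of the set of continuous homomorphisms $S\to F$ for each finite $F$ (a direct consequence of finite generation), and everything else---coincidence of the two topologies, closedness in $S^S$, continuity of evaluation and composition, and profiniteness via Theorem~\ref{t:ALCO-profiniteness}---is deduced from standard function-space topology exactly as intended. The paper does not supply its own proof, only references, so your write-up is in fact more detailed than what appears there.
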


A further result from~\cite{Steinberg:2010pm} that extends to the
general algebraic setting is that finitely generated profinite
algebras are \emph{Hopfian}%
\index{profinite algebra!Hopfian --} %
in the sense that all continuous onto endomorphisms are automorphisms.

Denote by \Aut S%
\index{\Aut S} %
the group of units of~\End S, consisting of all continuous
automorphisms of~$S$ whose inverse is also continuous, the latter
restriction being superfluous in case $S$ is compact. From
Theorem~\ref{t:ALCO-End-profinite}, it follows that, for a finitely
generated profinite algebra $S$, \Aut S is a profinite group. In case
$S$~is a profinite group, this result as well as the Hopfian property of~$S$
are well known in group theory \cite{Ribes&Zalesskii:2000}.

\subsection{Relatively free profinite algebras}
\label{sec:ALCO-relat-free-prof}

Let \pv Q be a pseudoquasivariety. We say that a pro-\pv Q algebra $S$
is \emph{free pro-\pv Q} over a set $A$ if there is a mapping
$\iota:A\to S$ satisfying the following universal property: for every
function $\varphi:A\to T$ into a pro-\pv Q algebra, there is a unique
continuous homomorphism $\hat\varphi:S\to T$ such that
$\hat\varphi\circ\iota=\varphi$. The mapping $\iota$~is usually not
unique and it is said to be a \emph{choice of free generators}. The
following result is well known \cite{Almeida:2002a}.

\begin{proposition}\label{p:ALCO-existence-free-pro-Q}
  For every pseudoquasivariety \pv Q and every set $A$, there exists a
  free pro-\pv Q algebra over~$A$, namely the inverse limit of all
  $A$-generated algebras from~\pv Q, with connecting homomorphisms
  respecting the choice of generators. Up to isomorphism respecting
  the choice of free generators, it is unique.
\end{proposition}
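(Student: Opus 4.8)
The plan is to prove existence by an explicit inverse-limit construction and then deduce uniqueness from the universal property via abstract nonsense.

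\medskip

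First I would construct the candidate free object. Fix the pseudoquasivariety $\pv Q$ and the set $A$. Consider the collection of all pairs $(P,\psi)$ where $P\in\pv Q$ and $\psi:A\to P$ is a map whose image generates $P$, taken up to the obvious notion of isomorphism (an isomorphism $P\to P'$ commuting with the choice maps), so that this collection forms a set rather than a proper class. Since $\pv Q$ consists of finite algebras and there are only finitely many maps from $A$ into each finite $P$ when $A$ is finite — and in general only a set of relevant isomorphism types — this indexing is a legitimate set. Order it by declaring $(P,\psi)\ge(P',\psi')$ when there is a (necessarily unique, by the generation hypothesis) homomorphism $\pi:P\to P'$ with $\pi\circ\psi=\psi'$; these $\pi$ serve as connecting homomorphisms. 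I would check that this indexing set is directed: given two $A$-generated algebras in $\pv Q$, the image of $A$ under the diagonal map into their product generates a subalgebra that again lies in $\pv Q$ (using closure under finite products and subalgebras) and dominates both. This yields an inverse system $\Cl I$ of finite algebras, and I set $S=\varprojlim\Cl I$, with $\iota:A\to S$ given by $\iota(a)=(\psi(a))_{(P,\psi)}$.

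\medskip

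Next I would verify that $S$ is a pro-$\pv Q$ algebra and satisfies the universal property. By the discussion following Proposition~\ref{p:pro-Q-algebras}, an inverse limit of algebras from $\pv Q$ is a pro-$\pv Q$ algebra, and by the compactness statement there (all $S_i$ compact, hence $\varprojlim\Cl I$ compact), $S$ is a compact residually-$\pv Q$ algebra. For the universal property, take any continuous $\varphi:A\to T$ with $T$ a pro-$\pv Q$ algebra. By Proposition~\ref{p:pro-Q-algebras}, $T$ is itself an inverse limit of algebras from $\pv Q$, so it suffices to extend $\varphi$ compatibly into each finite quotient of $T$ lying in $\pv Q$; each such quotient, together with the image of $A$, determines (after restricting to the generated subalgebra) one of the indexing pairs $(P,\psi)$, and the corresponding projection $S\to P$ furnishes the required factorization. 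Assembling these projections gives a continuous homomorphism $\hat\varphi:S\to T$ with $\hat\varphi\circ\iota=\varphi$; continuity is automatic since it is a map into an inverse limit whose composites with the projections are continuous. Uniqueness of $\hat\varphi$ follows because $\iota(A)$ generates a dense subalgebra of $S$ (each projection of $\iota(A)$ generates the corresponding finite factor) and two continuous homomorphisms agreeing on a generating set of a dense subalgebra of a compact algebra must coincide.

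\medskip

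Finally, uniqueness of the free object up to isomorphism respecting free generators is the standard consequence of the universal property: given two free pro-$\pv Q$ algebras $(S,\iota)$ and $(S',\iota')$ over $A$, the universal property of each produces continuous homomorphisms in both directions carrying generators to generators, and their composites fix the generators, hence by the uniqueness clause equal the respective identity maps. I expect the main obstacle to be the set-theoretic bookkeeping ensuring that the indexing class of $A$-generated members of $\pv Q$ is genuinely a set and that directedness holds even when $A$ is infinite; once the inverse system is legitimately in place, everything else is a routine application of the results already established, particularly Proposition~\ref{p:pro-Q-algebras} and the density argument for uniqueness.
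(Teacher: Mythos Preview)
Your argument is correct and follows the standard inverse-limit construction that the statement itself announces; the paper does not supply a proof of this proposition, instead citing it as well known with a reference. One cosmetic slip: in the universal property you write ``continuous $\varphi:A\to T$'', but $A$ is merely a set and the definition only asks for a function --- this is harmless since $A$ carries no topology.
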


We denote the free pro-\pv Q algebra over a set~$A$ by \Om AQ%
\index{$\Omega_A\mathsf{Q}$}. %
The notation is justified below.

An alternative way of constructing free pro-\pv Q algebras is through
the pro-\pv Q Hausdorff completion of free algebras.

\begin{proposition}\label{p:ALCO-free-pro-Q-through-completion}
  Let \pv Q be a pseudoquasivariety and let $A$ be a set. Let \Cl V be
  the variety generated by~\pv Q. Then the pro-\pv Q Hausdorff
  completion of the free algebra~$F_A\Cl V$ is a free pro-\pv Q
  algebra over~$A$.
\end{proposition}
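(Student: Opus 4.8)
The plan is to verify that $\compl Q{F_A\Cl V}$, equipped with the composite map $j=\eta\circ\iota\colon A\to\compl Q{F_A\Cl V}$, satisfies the universal property of the free pro-\pv Q algebra, where $\iota\colon A\to F_A\Cl V$ is the insertion of generators and $\eta\colon F_A\Cl V\to\compl Q{F_A\Cl V}$ is the natural (dense, homomorphic) map. That $\compl Q{F_A\Cl V}$ is itself a pro-\pv Q algebra is immediate from Proposition~\ref{p:ALCO-pro-Q-completion-is-pro-Q}, so the whole content lies in the universal property: for a pro-\pv Q algebra $T$ and a function $\varphi\colon A\to T$, I must produce a unique continuous homomorphism $\Phi\colon\compl Q{F_A\Cl V}\to T$ with $\Phi\circ j=\varphi$.

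First I would observe that, as an abstract algebra, $T$ lies in \Cl V. Indeed, by Proposition~\ref{p:pro-Q-algebras} a pro-\pv Q algebra is (isomorphic to) a closed subalgebra of a direct product of members of~\pv Q; since $\pv Q\subseteq\Cl V$ and \Cl V is closed under arbitrary products and subalgebras, $T\in\Cl V$. The universal property of the free algebra $F_A\Cl V$ over \Cl V then yields a unique homomorphism $\tilde\varphi\colon F_A\Cl V\to T$ with $\tilde\varphi\circ\iota=\varphi$.

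The heart of the argument, and the step I expect to be the main obstacle, is to show that $\tilde\varphi$ is uniformly continuous when $F_A\Cl V$ carries its pro-\pv Q uniformity and $T$ carries its unique compact uniformity. Here I would use that the topology of $T$ coincides with its pro-\pv Q topology (Proposition~\ref{p:ALCO-recognizable-vs-clopen-in-profinite}) and that a compact space has a unique uniformity, so the uniformity of $T$ is its pro-\pv Q uniformity, whose defining basis consists of the kernels of continuous homomorphisms $\psi\colon T\to P$ with $P\in\pv Q$. For each such $\psi$, the composite $\psi\circ\tilde\varphi\colon F_A\Cl V\to P$ is a homomorphism whose image is a subalgebra of $P$ and hence lies in~\pv Q, because \pv Q is closed under subalgebras; thus $\ker(\psi\circ\tilde\varphi)$ is one of the congruences defining the pro-\pv Q uniformity of the discrete algebra $F_A\Cl V$. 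Consequently $\tilde\varphi$ pulls the basic entourages of $T$ back to entourages of $F_A\Cl V$, so it is uniformly continuous. (Equivalently, one may invoke Proposition~\ref{p:ALCO-continuity}: every \pv Q-recognizable $L\subseteq T$ satisfies $\tilde\varphi^{-1}L=(\psi\circ\tilde\varphi)^{-1}\psi L$, which is \pv Q-recognizable in $F_A\Cl V$.) This is the single point where the algebraic hypotheses---freeness over the generated variety and closure of \pv Q under subalgebras---must be married to the uniform structure.

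With uniform continuity in hand, the extension property of completions (Proposition~\ref{p:ALCO-extensions-to-completions}, together with the compatibility of Hausdorffization and completion recalled above), applied using that $T$ is already complete and Hausdorff and hence coincides with its own Hausdorff completion, produces a unique uniformly continuous map $\Phi\colon\compl Q{F_A\Cl V}\to T$ extending $\tilde\varphi$. A routine density argument shows that $\Phi$ is a homomorphism: for each $f\in\sigma_n$ the two continuous maps $(x_1,\dots,x_n)\mapsto\Phi(f(x_1,\dots,x_n))$ and $(x_1,\dots,x_n)\mapsto f(\Phi(x_1),\dots,\Phi(x_n))$ agree on the dense image $\eta(F_A\Cl V)$, hence everywhere, as $T$ is Hausdorff. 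Moreover $\Phi\circ j=\Phi\circ\eta\circ\iota=\tilde\varphi\circ\iota=\varphi$. For uniqueness, note that $\iota(A)$ generates $F_A\Cl V$, so $j(A)$ generates the dense subalgebra $\eta(F_A\Cl V)$ and therefore topologically generates $\compl Q{F_A\Cl V}$; any two continuous homomorphisms into $T$ agreeing on $j(A)$ must agree on this generated subalgebra and, by continuity into the Hausdorff space $T$, on all of $\compl Q{F_A\Cl V}$. Hence $\Phi$ is unique, which completes the verification that $\compl Q{F_A\Cl V}$ is free pro-\pv Q over~$A$.
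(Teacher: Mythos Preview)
Your argument is correct and is the standard way to establish this result. Note, however, that the paper does not actually supply a proof of this proposition: it is stated without proof, in keeping with the paper's declared policy of referring most proofs to the bibliography. There is therefore nothing to compare against; your write-up could serve as the omitted proof. One minor remark: you might streamline the passage where you show $\Phi$ is a homomorphism by observing directly that the basic operations on $\compl Q{F_A\Cl V}$ are, by construction, the unique continuous extensions of those on (the image of) $F_A\Cl V$, so the unique continuous extension of the homomorphism $\tilde\varphi$ is automatically a homomorphism---but your density argument is equally valid.
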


Note that, by Proposition~\ref{p:ALCO-uniformity-vs-ultrametric}, if
$A$ is finite, then \Om AQ is metrizable. In contrast, the argument
presented in \cite[end of Section~3]{Almeida&Steinberg:2008} for
pseudovarieties of monoids may be extended to every nontrivial
pseudoquasivariety \pv Q to show that, if $A$~is infinite, then \Om AQ
is not metrizable.

A topological algebra $S$
is \emph{self-free}%
\index{profinite algebra!self-free --} %
\index{topological algebra!self-free --} %
with \emph{basis}%
\index{basis!of free profinite algebra} %
$A$ if $A$~is a generating subset of~$S$ such that every
mapping $A\to S$ extends uniquely to a continuous endomorphism of~$S$.

\begin{theorem}\label{t:ALCO-self-freeness}
  The following conditions are equivalent for a profinite algebra $S$:
  \begin{enumerate}
  \item\label{item:ALCO-self-freeness-1} the topological algebra $S$
    is self-free with basis $A$;
  \item\label{item:ALCO-self-freeness-2} there is a pseudoquasivariety
    \pv Q such that $S$ is isomorphic with \Om AQ;
  \item\label{item:ALCO-self-freeness-3} there is a pseudovariety \pv
    V such that $S$ is isomorphic with \Om AV.
  \end{enumerate}
\end{theorem}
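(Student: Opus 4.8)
The plan is to prove the equivalences cyclically, establishing $\eqref{item:ALCO-self-freeness-3}\Rightarrow\eqref{item:ALCO-self-freeness-2}\Rightarrow\eqref{item:ALCO-self-freeness-1}\Rightarrow\eqref{item:ALCO-self-freeness-3}$. The implication $\eqref{item:ALCO-self-freeness-3}\Rightarrow\eqref{item:ALCO-self-freeness-2}$ is immediate since every pseudovariety is in particular a pseudoquasivariety. The implication $\eqref{item:ALCO-self-freeness-2}\Rightarrow\eqref{item:ALCO-self-freeness-1}$ is essentially a specialization of the defining universal property: if $S\cong\Om AQ$, then $A$ generates $S$ (as the image of the free generators is dense), and any mapping $A\to S$ into the pro-\pv Q algebra $S$ itself extends uniquely to a continuous homomorphism $S\to S$, i.e.\ to a continuous endomorphism. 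So $S$ is self-free with basis $A$. The genuinely substantial implication, and the one I expect to be the main obstacle, is $\eqref{item:ALCO-self-freeness-1}\Rightarrow\eqref{item:ALCO-self-freeness-3}$: from a self-free profinite algebra one must manufacture a pseudovariety \pv V with $S\cong\Om AV$.

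First I would address the easy direction in detail. For $\eqref{item:ALCO-self-freeness-2}\Rightarrow\eqref{item:ALCO-self-freeness-1}$, the uniqueness of the extending endomorphism follows directly from the uniqueness clause in the universal property defining \Om AQ, applied with target algebra $S$ itself (which lies in $\bar{\pv Q}$ by Proposition~\ref{p:pro-Q-algebras}). I would note that $A$ is a generating subset in the topological sense, which is exactly the hypothesis needed for ``self-free with basis $A$.''

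For the hard direction $\eqref{item:ALCO-self-freeness-1}\Rightarrow\eqref{item:ALCO-self-freeness-3}$, my strategy is to let \pv V be the pseudovariety generated by the finite continuous homomorphic images of $S$, and then argue that the natural map from \Om AV to $S$ is an isomorphism. More precisely, since $S$ is profinite, Theorem~\ref{t:ALCO-profiniteness} presents $S$ as an inverse limit of finite algebras, each of which is a continuous homomorphic image of $S$; I would take \pv V to be the pseudovariety they generate. The inclusion $\iota\colon A\to S$ and the universal property of \Om AV yield a canonical continuous homomorphism $\pi\colon\Om AV\to S$, which is onto because $A$ generates $S$ and the image of a continuous homomorphism from a compact algebra is closed. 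The crux is injectivity of $\pi$. Here is where self-freeness does the work: given two distinct points of \Om AV, they are separated by some continuous homomorphism into a finite algebra $P\in\pv V$, hence (by the definition of \pv V) $P$ is a quotient of $S$, and I would lift the relevant maps using the self-endomorphism extension property to show the two points have distinct images in $S$. Concretely, one shows that every continuous homomorphism $\Om AV\to P$ factors through $\pi$ by exploiting that the composite $A\to\Om AV\to P$ extends, via self-freeness applied inside $S$, to a continuous map $S\to P$ agreeing on generators.

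The main obstacle I anticipate is verifying that the separating homomorphisms on \Om AV really factor through $S$, i.e.\ that self-freeness of $S$ forces $\pi$ to reflect the separation of points guaranteed by membership in \pv V. The delicate point is that self-freeness only guarantees extension of maps $A\to S$ to endomorphisms of $S$, not a priori to homomorphisms into external finite algebras; one must leverage that each finite algebra in \pv V is already a quotient of $S$ to reduce external separation to internal endomorphisms. I would handle this by composing a separating homomorphism $q\colon S\to P$ with an endomorphism of $S$ obtained from self-freeness, arranging generators to match, and then checking commutativity on the dense subalgebra generated by $A$, so that continuity and density propagate the agreement to all of $S$. Once injectivity is secured, $\pi$ is a continuous bijection of compact Hausdorff spaces, hence a homeomorphism, and being an algebraic homomorphism it is an isomorphism of topological algebras, giving $S\cong\Om AV$ as required.
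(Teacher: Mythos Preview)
Your overall strategy matches the paper's exactly: prove $\eqref{item:ALCO-self-freeness-1}\Rightarrow\eqref{item:ALCO-self-freeness-3}$ by taking \pv V to be the pseudovariety generated by the finite continuous homomorphic images of $S$, and show the canonical map $\pi\colon\Om AV\to S$ is an isomorphism by establishing injectivity via separation in finite members of~\pv V.

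There is, however, a genuine gap in your injectivity argument. You assert that a separating algebra $P\in\pv V$ ``is a quotient of $S$ (by the definition of \pv V)'', and your factorization plan depends on already having a surjection $q\colon S\to P$ to compose with a self-free endomorphism. But the definition of \pv V only guarantees that $P$ is built from continuous quotients of $S$ via the pseudovariety operators: $P$ is a homomorphic image of a subalgebra $U$ of a finite product $V_1\times\cdots\times V_n$, where each $V_i$ is a continuous quotient of $S$ via some $\xi_i\colon S\to V_i$. No direct map $S\to P$ is available a priori; producing one is precisely the content of the argument. The paper closes this gap as follows: given the separating $\psi\colon\Om AV\to T$ (your $P$), lift $\psi\circ\iota$ through the surjection $\rho\colon U\to T$ to a map $\eta\colon A\to U$; for each coordinate, lift $\pi_i\circ\eta\colon A\to V_i$ through $\xi_i$ to get $\mu_i\colon A\to S$; extend each $\mu_i$ to an endomorphism $\hat\mu_i$ of $S$ by self-freeness; and assemble $\zeta\colon S\to\prod_i V_i$ by $\pi_i\circ\zeta=\xi_i\circ\hat\mu_i$. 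Then $\zeta|_A=\eta$, so the image of $\zeta$ lies in $U$, and $\rho\circ\zeta\circ\pi=\psi$, yielding $\pi(u)\ne\pi(v)$. The key point you are missing is that self-freeness must be invoked once per factor of the product, not once for $P$ directly; your sketch ``compose a separating homomorphism $q\colon S\to P$ with an endomorphism'' presupposes a $q$ whose existence is exactly what the product decomposition is needed to establish.
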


\begin{proof}
  The implications $\eqref{item:ALCO-self-freeness-3}\Rightarrow
  \eqref{item:ALCO-self-freeness-2}\Rightarrow
  \eqref{item:ALCO-self-freeness-1}$ are obvious, so it remains to
  prove that $\eqref{item:ALCO-self-freeness-1}\Rightarrow
  \eqref{item:ALCO-self-freeness-3}$. Suppose that
  \eqref{item:ALCO-self-freeness-1} holds and let \pv V be the
  pseudovariety generated by all finite algebras that are continuous
  homomorphic images of~$S$. We claim that $S$ is isomorphic with~\Om
  AV.

  We first observe that, since $S$ is a profinite algebra, it is an
  inverse limit of finite algebras, which may be chosen to be
  continuous homomorphic images of~$S$. Hence $S$ is a pro-\pv V
  algebra and, therefore, there is a unique continuous homomorphism
  $\varphi:\Om AV\to S$ such that, for a choice of free generators
  $\iota:A\to\Om AV$, the composite $\varphi\circ\iota$ is the
  inclusion mapping $A\hookrightarrow S$. Since $S$~is generated
  by~$A$ as a topological algebra, the function $\varphi$~is
  surjective. It suffices to show that it is injective.

  Let $u,v$ be distinct points of~\Om AV. Since \Om AV is residually
  in~\pv V, there is some continuous homomorphism $\psi:\Om AV\to T$,
  onto some $T\in\pv V$, such that $\psi(u)\ne\psi(v)$. By the
  definition of~\pv V, there are continuous homomorphisms $\xi_i:S\to
  V_i$ ($i=1,\ldots,n$) onto finite algebras, a subalgebra $U$
  of~$\prod_{i=1}^n V_i$, and a surjective homomorphism $\rho:U\to T$.
  Since $\rho$~is surjective, there is a mapping $\eta:A\to U$ such
  that $\rho\circ\eta=\psi\circ\iota$. Let $\pi_i:\prod_{j=1}^nV_j\to
  V_i$ be the $i$th component projection. Since $\xi_i$ is surjective,
  there is a function $\mu_i:A\to S$ such that
  $\xi_i\circ\mu_i=\pi_i\circ\eta$. By self-freeness of~$S$, with
  basis~$A$, it follows that there is a continuous endomorphism
  $\hat\mu_i$ of $S$ such that $\hat\mu_i|_A=\mu_i$. Let
  $\zeta:S\to\prod_{i=1}^n V_i$ be the unique continuous homomorphism
  such that $\pi_i\circ\zeta=\xi_i\circ\hat\mu_i$ for $i=1,\ldots,n$.
  The following diagram depicts the relationships between these mappings.
  $$
  \xymatrix{
    &
    A\
    \ar[ld]_\iota
    \ar[rd]_(.6)\eta
    \ar@{^(->}[r]
    \ar@<-1pt> `u[rr] `r [rr]^{\mu_i}
    &
    S
    \ar@{-->}[d]^\zeta
    \ar[rd]_\zeta
    \ar[r]_{\hat\mu_i}
    &
    S
    \ar[rd]^{\xi_i}
    &
    \\
    \Om AV
    \ar[r]_\psi
    \ar[rru]^\varphi
    &
    T
    &
    U\ 
    \ar[l]^\rho
    \ar@{^(->}[r]
    &
    \prod_{j=1}^nV_j
    \ar[r]_(.6){\pi_i}
    &
    V_i
  }
  $$
  Note that $\pi_i\circ\zeta|_A=\xi_i\circ\mu_i=\pi_i\circ\eta$ for
  $i=1,\ldots,n$, which shows that $\zeta|_A=\eta$ and so the image
  of~$\zeta$ is contained in~$U$ and the chain of equalities
  $\rho\circ\zeta\circ\varphi\circ\iota=\rho\circ\zeta|_A
  =\rho\circ\eta=\psi\circ\iota$ holds, which yields
  $\rho\circ\zeta\circ\varphi=\psi$. Since $\psi(u)\ne\psi(v)$, we
  deduce that $\varphi(u)\ne\varphi(v)$, which establishes the claim
  that $\varphi$~is injective.
\end{proof}

Theorem~\ref{t:ALCO-self-freeness} not only gives a characterization
of relatively free profinite algebras in terms of properties that only
involve the algebras themselves, but also shows that, when talking
about such algebras, we may as well deal only with pseudovarieties.

Yet another description of relatively free profinite algebras is given
by algebras of implicit operations, which further provide a useful
viewpoint. For a class \Cl C of profinite algebras and a set $A$, an
\emph{$A$-ary implicit operation}%
\index{operation!implicit --}%
\index{implicit operation} %
$w$ on~\Cl C is a correspondence associating with each $S\in\Cl C$ a
continuous operation $w_S:S^A\to S$ such that, for every continuous
homomorphism $\varphi:S\to T$ between members of~\Cl C, the equality
$w_T(\varphi\circ f)=\varphi(w_S(f))$ holds for every $f\in S^A$. We
call $w_S$ the \emph{interpretation} of~$w$ in~$S$.

\begin{proposition}\label{p:ALCO-implicit-operations}
  Let \pv C be a class of finite algebras, let \pv V be the
  pseudovariety it generates, and let $A$ be a set. For $w\in\Om AV$
  and a pro-\pv V algebra $S$, let $\bar w_S:S^A\to S$ be defined by
  $\bar w_S(\varphi)=\hat\varphi(w)$, where $\hat\varphi$ is the
  unique continuous homomorphism $\Om AV\to S$ such that
  $\hat\varphi\circ \iota=\varphi$. Then $\bar w$ is an $A$-ary
  implicit operation on the class of all pro-\pv V algebras and every
  such operation is of this form. Moreover, the correspondence
  associating to $w$ the restriction of $\bar w$ to~\pv C is injective
  and, therefore, so is the correpondence $w\mapsto\bar w$.
\end{proposition}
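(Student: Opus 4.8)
The statement bundles three claims: that each $\bar w$ is a genuine implicit operation (naturality together with continuity of every interpretation $\bar w_S$), that the assignment $w\mapsto\bar w$ is onto the $A$-ary implicit operations on pro-\pv V algebras, and that it is injective already after restriction to~\pv C. My plan is to dispatch the two algebraic claims first, since they follow formally from the universal property of~\Om AV (Proposition~\ref{p:ALCO-existence-free-pro-Q}) and the uniqueness of continuous extensions, and to isolate continuity as the single analytic point.

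For naturality, given a continuous homomorphism $\alpha\colon S\to T$ of pro-\pv V algebras and $\varphi\in S^A$, I would note that $\alpha\circ\hat\varphi\colon\Om AV\to T$ is a continuous homomorphism extending $\alpha\circ\varphi$, so by uniqueness it equals $\widehat{\alpha\circ\varphi}$; evaluating at~$w$ gives $\alpha(\bar w_S(\varphi))=\bar w_T(\alpha\circ\varphi)$, which is exactly the defining identity. For surjectivity, let $\pi$ be an arbitrary $A$-ary implicit operation and set $w=\pi_{\Om AV}(\iota)$, where $\iota\colon A\to\Om AV$ is the chosen family of free generators, viewed as an element of $(\Om AV)^A$. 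Since \Om AV is itself pro-\pv V, for any pro-\pv V algebra $S$ and any $\varphi\in S^A$ the homomorphism $\hat\varphi$ is admissible in the naturality condition for~$\pi$; applying that condition to $\iota$ yields $\pi_S(\varphi)=\pi_S(\hat\varphi\circ\iota)=\hat\varphi(w)=\bar w_S(\varphi)$, so $\pi=\bar w$.

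For injectivity after restriction to \pv C, suppose $w\ne w'$ in \Om AV. As \Om AV is residually in~\pv V, some continuous homomorphism onto a member $V_0$ of~\pv V separates them. Since $\pv V=\pv V(\pv C)$, the algebra $V_0$ is a quotient $\rho\colon U\to V_0$ of a subalgebra $U$ of a finite product $\prod_i C_i$ with each $C_i\in\pv C$. Lifting the generators through~$\rho$ and using freeness exactly as in the proof of Theorem~\ref{t:ALCO-self-freeness}, I obtain a continuous homomorphism $\Om AV\to U$ that still separates $w$ and~$w'$; composing with a suitable coordinate projection produces a continuous homomorphism $\theta\colon\Om AV\to C_i$ into some $C_i\in\pv C$ with $\theta(w)\ne\theta(w')$. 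Writing $\theta=\hat\varphi$ for $\varphi=\theta\circ\iota\in C_i^A$, this reads $\bar w_{C_i}(\varphi)\ne\bar{w'}_{C_i}(\varphi)$, so the restrictions of $\bar w$ and $\bar{w'}$ to \pv C already differ.

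The main obstacle is the continuity of $\bar w_S\colon S^A\to S$ for a pro-\pv V algebra~$S$; everything else is purely formal. Here I would exploit that the topology of~$S$ is the initial topology for the continuous homomorphisms of~$S$ onto finite algebras (Proposition~\ref{p:ALCO-recognizable-vs-clopen-in-profinite}, with Proposition~\ref{c:ALCO-finite-images-of-pro-V} keeping the targets inside~\pv V): it suffices to check that $g\circ\bar w_S$ is continuous for each continuous $g\colon S\to P$ onto a finite $P$. Since $g\circ\hat\varphi=\widehat{g\circ\varphi}$ by uniqueness, and $\varphi\mapsto g\circ\varphi$ is continuous from $S^A$ to $P^A$, the problem reduces to continuity of the single map $P^A\to P$, $h\mapsto\hat h(w)$. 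I would establish this by approximating $w$ in the dense subalgebra of \Om AV generated by~$\iota(A)$: for a term element the value $\hat h(w)$ depends only on the finitely many generators occurring in it, so the associated map is built from projections and the (continuous) basic operations of~$P$ and is manifestly continuous. The delicate part—and the step I expect to require genuine care—is controlling the passage from these term operations to the limit operation so that continuity is preserved.
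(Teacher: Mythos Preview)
The paper states this proposition without proof, deferring to the literature, so there is no in-paper argument to compare against; I assess your proposal directly. Your arguments for naturality, surjectivity, and injectivity are correct and standard, and your reduction of the continuity of $\bar w_S$ to that of $\bar w_P$ for finite $P\in\pv V$, via the initial-topology description of a pro-\pv V algebra and the identity $g\circ\bar w_S=\bar w_P\circ g^A$, is exactly right.

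At the last step, however, you are making the problem harder than it is in the case that matters and unsolvable in the case that does not. If $A$ is finite, then $P^A$ is a finite discrete space and \emph{every} map $P^A\to P$ is continuous; no term approximation and no passage to limits is needed, so your reduction already completes the proof. If $A$ is infinite, your hesitation is fully justified: continuity can genuinely fail, so no limit argument will succeed. Take \pv V to be the pseudovariety of finite semilattices, $A=\{a_1,a_2,\ldots\}$, and $P=\{0,1\}$ with $0$ absorbing. The products $w_n=a_1\cdots a_n$ form a Cauchy sequence in~$\om AV$ (their image in any finite semilattice stabilizes at the minimum element), hence converge to some $w\in\Om AV$; then $\bar w_P(\varphi)=1$ if and only if $\varphi(a_i)=1$ for all~$i$, so $\bar w_P^{-1}(1)$ is a single point of $\{0,1\}^A$, which is not open, and $\bar w_P$ is discontinuous. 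The statement should therefore be read with $A$ finite, which is also the only case in which implicit operations are used elsewhere in the chapter.
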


Thus, we may as well identify each $w\in\Om AV$ with the implicit
operation $\bar w$ that it determines. In terms of implicit
operations, the interpretation of the basic operations is quite
transparent: for an $n$-ary operation symbol $f$, implicit operations
$w_1,\ldots,w_n\in\Om AV$, a pro-\pv V algebra $S$, and a function
$\varphi\in S^A$, we have
$$\bigl(f^{\Om AV}(w_1,\ldots,w_n)\bigr)_S(\varphi)
=f^S\bigl((w_1)_S(\varphi),\ldots,(w_n)_S(\varphi)\bigr).$$
In other words, the basic operations are interpreted pointwise.

Among the implicit operations on the class of all profinite algebras,
we have the projections $x_a$. More precisely, for a set $A$ and $a\in
A$, the $A$-ary projection on the $a$-component is interpreted in a
profinite algebra $S$ by $(x_a)_S(\varphi)=\varphi(a)$ for each
$\varphi\in S^A$. By restriction to pro-\pv V algebras, we also obtain
corresponding implicit operations, which we still denote~$x_a$. The
subalgebra of~\Om AV generated by the $x_a$ with $a\in A$ is
denoted~\om AV. Its elements are also known as \emph{$A$-ary explicit
  operations}%
\index{explicit operation}%
\index{operation!explicit --} %
on pro-\pv V algebras. From the universal property of~\Om AV, it
follows immediately that \om AV is the free algebra $F_A\Cl V$, where
\Cl V is the variety generated by~\pv V. The following result
explains the notation.

\begin{proposition}\label{p:ALCO-om_AV-dense-in-Om_AV}
  Let \pv V be a pseudovariety. Then the algebra \om AV is dense
  in~\Om AV.
\end{proposition}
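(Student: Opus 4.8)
The plan is to realize $\Om AV$ as a completion of $\om AV$ and then invoke the general fact that a uniform space sits densely inside its completion. Concretely, let $\Cl V$ be the variety generated by $\pv V$. By Proposition~\ref{p:ALCO-free-pro-Q-through-completion}, the pro-$\pv V$ Hausdorff completion $\compl V{F_A\Cl V}$ is a free pro-$\pv V$ algebra over $A$; by the uniqueness part of Proposition~\ref{p:ALCO-existence-free-pro-Q}, it is isomorphic to $\Om AV$ through an isomorphism respecting the choice of free generators, so I would simply identify the two. Recall from the discussion preceding the statement that $\om AV$ is the (abstract) subalgebra of $\Om AV$ generated by the projections $x_a$ and that it coincides with the free algebra $F_A\Cl V$.

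First I would consider the natural mapping $\eta\colon F_A\Cl V\to\compl V{F_A\Cl V}$ arising from the completion construction (the Hausdorffization of $F_A\Cl V$ followed by its embedding into the completion). Since every uniform space embeds as a dense subspace of its completion and the Hausdorffization map is surjective, the image $\eta(F_A\Cl V)$ is dense in $\compl V{F_A\Cl V}$. The next step is to identify this image. As $\eta$ is a homomorphism and $F_A\Cl V$ is generated as an abstract algebra by its free generators $a\in A$, the image $\eta(F_A\Cl V)$ is the subalgebra generated by the elements $\eta(a)$. Under the identification above, each $\eta(a)$ is the free generator of $\Om AV$ corresponding to $a$, that is the projection $x_a$. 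Hence $\eta(F_A\Cl V)=\om AV$, and density of $\eta(F_A\Cl V)$ gives at once that $\om AV$ is dense in $\Om AV$.

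The argument is essentially bookkeeping once the completion description is in hand, so there is no deep obstacle; the one point deserving care is the identification $\eta(F_A\Cl V)=\om AV$, where one must check that the image is the subalgebra generated by the $x_a$ rather than its closure, and that interposing the Hausdorffization quotient does not damage density. An alternative, self-contained route avoids Proposition~\ref{p:ALCO-free-pro-Q-through-completion}: take the closure $\overline{\om AV}$, which is a closed --- hence pro-$\pv V$ (Proposition~\ref{p:pro-Q-algebras}) --- subalgebra of $\Om AV$, note that it surjects onto every finite $A$-generated quotient $S\in\pv V$ because $S$ is generated by the images of the $x_a$, and then conclude $\overline{\om AV}=\Om AV$ from the inverse limit presentation of Proposition~\ref{p:ALCO-existence-free-pro-Q} via a finite-intersection-property compactness argument; in this variant the compactness step is the only technical heart.
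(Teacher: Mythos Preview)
The paper states this proposition without proof, treating it as immediate from the preceding discussion: just before the statement, it identifies $\om AV$ with $F_A\Cl V$ (where $\Cl V$ is the variety generated by~$\pv V$), and Proposition~\ref{p:ALCO-free-pro-Q-through-completion} realizes $\Om AV$ as the pro-$\pv V$ Hausdorff completion of $F_A\Cl V$. Your first argument spells out precisely this implicit reasoning --- density of a space in its Hausdorff completion --- so it is correct and is the intended route.

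Your alternative via the inverse-limit description is also sound and genuinely self-contained; it trades the soft completion machinery for a direct compactness argument, which is a reasonable exchange but not needed here given how close the completion approach lies to the surface.
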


The operational point of view has the advantage that pro-\pv V
algebras are automatically endowed with a stucture of profinite
algebras over any enriched signature obtained by adding implicit
operations on~\pv V. This idea is essential for
Subsection~\ref{sec:ALCO-decid-tamen}.

A formal equality $u=v$ between members of some \Om AV is said to be a
\emph{pseudoidentity for~\pv V}%
\index{pseudoidentity}%
\index{pseudoidentity|see{profinite identity}}; %
the elements of~$A$ are called the \emph{variables} of the
pseudoidentity. It is said to \emph{hold} in a pro-\pv V algebra $S$
if $u_S=v_S$. In case \pv V is the pseudovariety of all finite
algebras, we omit reference to~\pv V. For a set $\Sigma$ of
pseudoidentities for~\pv V, the class of all algebras from~\pv V that
satisfy all pseudoidentities from~$\Sigma$ is denoted $\op\Sigma\cl$%
\index{$\op\Sigma\cl$}; %
this class is said to be \emph{defined} by~$\Sigma$ and $\Sigma$~to be
a \emph{basis of pseudoidentities}%
\index{pseudoidentity!basis}%
\index{basis of pseudoidentities} %
for~it.

\begin{theorem}[Reiterman~\cite{Reiterman:1982}]\label{t:ALCO-Reiterman}
  A subclass of a pseudovariety \pv V is a pseudovariety if and only
  if it is defined by some set of pseudoidentities for~\pv V.
\end{theorem}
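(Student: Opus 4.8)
The plan is to prove both implications, the substance lying in the ``only if'' direction. Throughout I fix a countably infinite reservoir of variables and let $\Sigma$ be the set of all pseudoidentities $u=v$, with $u,v\in\Om AV$ for finite subsets $A$ of that reservoir, that hold in every algebra of $\pv W$. This is genuinely a set and not a proper class precisely because each finite algebra is finitely generated, so only finitely many variables are ever needed.

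The ``if'' direction is routine: I would check that $\op\Sigma\cl$ is a pseudovariety. Nonemptiness holds because the trivial algebra (the empty product, which lies in $\pv V$) satisfies every pseudoidentity. Closure under subalgebras, homomorphic images, and finite products all reduce to the single fact recorded in Proposition~\ref{p:ALCO-implicit-operations}, that implicit operations commute with continuous homomorphisms, $w_T(\varphi\circ f)=\varphi(w_S(f))$, and are computed componentwise on products; since every homomorphism between finite algebras is continuous, satisfaction of $u=v$ is inherited by subalgebras (restrict along the inclusion), by quotients (lift the argument through a surjection), and by finite products (work componentwise). As $\pv W\subseteq\op\Sigma\cl$ by the very definition of $\Sigma$, this half needs only $\op\Sigma\cl\subseteq\pv V$, which is immediate.

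For the ``only if'' direction I assume $\pv W\subseteq\pv V$ is a pseudovariety and must show $\op\Sigma\cl\subseteq\pv W$. The key is to identify, for each finite variable set $A$, the $A$-ary pseudoidentities valid in $\pv W$ with the kernel of a canonical map. By Proposition~\ref{p:ALCO-existence-free-pro-Q} the free pro-$\pv W$ algebra $\Om AW$ exists, and since $\pv W\subseteq\pv V$ it is a pro-$\pv V$ algebra, so the universal property of $\Om AV$ yields a continuous homomorphism $q\colon\Om AV\to\Om AW$ respecting generators, which is surjective because the free generators topologically generate $\Om AW$. Since $\Om AW$ is residually in $\pv W$ and every homomorphism from $\Om AV$ into a member of $\pv W$ factors through $q$, a routine separation argument gives that $u=v$ holds in $\pv W$ if and only if $q(u)=q(v)$; that is, the $A$-ary part of $\Sigma$ is exactly $\ker q$. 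Now let $S\in\op\Sigma\cl$; being finite it is $A$-generated for some finite $A$ via a surjection $p\colon A\to S$ extending to a continuous surjection $\hat p\colon\Om AV\to S$. Since $S$ satisfies every pseudoidentity of $\ker q$, in particular at the generating assignment $p$ we get $\hat p(u)=\hat p(v)$, whence $\ker q\subseteq\ker\hat p$, so $\hat p$ factors as $\hat p=r\circ q$ for a continuous surjection $r\colon\Om AW\to S$.

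It then remains to conclude $S\in\pv W$ from the fact that $S$ is a finite (hence profinite) continuous homomorphic image of the pro-$\pv W$ algebra $\Om AW$, and this is where the main obstacle lies: it is exactly the clause of Proposition~\ref{p:pro-Q-algebras} asserting that, for a pseudovariety, $\bar{\pv W}$ is closed under profinite continuous homomorphic images. Granting it, $S\in\bar{\pv W}$, and since $\bar{\pv W}$ and $\pv W$ have the same finite members we obtain $S\in\pv W$, as required. I expect the verification that $q$ is a quotient map (so that continuity of $r$ is automatic) and the separation argument identifying $\Sigma$ with $\ker q$ to be straightforward consequences of compactness and of $\Om AW$ being residually in $\pv W$; the genuinely nontrivial input is the homomorphic-image closure of $\bar{\pv W}$.
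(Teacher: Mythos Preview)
The paper does not supply its own proof of Reiterman's theorem: it attributes the result to Reiterman and merely remarks that many alternative proofs and generalizations exist. So there is nothing to compare your argument against.

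That said, your proposal is correct and is in fact the standard modern proof in the profinite setting. The ``if'' direction is routine, as you say; the ``only if'' direction proceeds exactly as one would expect: identify the $A$-ary theory of $\pv W$ with $\ker(q\colon\Om AV\to\Om AW)$, factor the evaluation $\hat p\colon\Om AV\to S$ through $q$, and invoke the fact that finite continuous images of pro-$\pv W$ algebras lie in $\pv W$. Two minor remarks. First, your exposition of the ``if'' direction is slightly tangled with the ``only if'' direction (the sentence ``this half needs only $\op\Sigma\cl\subseteq\pv V$'' belongs to the argument that $\pv W=\op\Sigma\cl$, not to closure of $\op\Sigma\cl$ under the pseudovariety operations); separating the two would make the write-up cleaner. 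Second, the ``genuinely nontrivial input'' you isolate is stated in the paper as a stand-alone result, Proposition~\ref{c:ALCO-finite-images-of-pro-V}, which is precisely the statement that a finite continuous homomorphic image of a pro-$\pv W$ algebra belongs to $\pv W$; citing that directly is crisper than going through the more general closure clause of Proposition~\ref{p:pro-Q-algebras}.
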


There are many alternative proofs of Reiterman's theorem, as well as
extensions to various generalizations of the algebras considered in
this chapter. The most relevant in the context of this handbook seems
to be the one obtained by Molchanov~\cite{Molchanov:1994} for
``pseudovarieties'' of algebras with predicates, also proved
independently by Pin and Weil~\cite{Pin&Weil:1996b}.

The interest in Reiterman's theorem stems from the fact that it
provides a language to obtain elegant descriptions of pseudovarieties.
Moreover, namely through the techniques described in the next
subsection, they sometimes lead to decidability results, even if in a
somewhat indirect way.

\subsection{Decidability and tameness}
\label{sec:ALCO-decid-tamen}

In the theory of regular word or tree languages, pseudovarieties serve
the purpose of providing an algebraic classification tool for certain
combinatorial properties. The properties that are amenable to this
approach have been identified, first by Eilenberg
\cite{Eilenberg:1976} for word languages, and later by the first
author \cite{Almeida:1990c,Almeida:1994a} and Steinby
\cite{Steinby:1992} for tree languages. By considering additional
relational structure on the algebras, further combinatorial properties
may be captured (see \cite{Pin:1995d,Polak:2004}).

Basically, in such an algebraic approach, one seeks to decide whether
a language has a certain combinatorial property by testing whether its
syntactic algebra has the corresponding algebraic property, that is,
if this algebra belongs to a certain pseudovariety. Thus, a property
of major interest that pseudovarieties may have is decidability of the
membership problem: given a finite algebra, decide whether or not it
belongs to the pseudovariety. We then simply say that the
pseudovariety is \emph{decidable}%
\index{pseudovariety!decidable --}. %

One way to establish that a pseudovariety is decidable is to prove
that it has a finite basis of pseudoidentities which are equalities
between implicit operations that can be effectively computed, so that
the pseudoidentities in the basis can be effectively checked. In fact,
for most commonly encountered implicit operations, the computation can
be done in polynomial time, in terms of the size of the algebra, and
so the verification of the basic pseudoidentities can then be done in
polynomial time.

However, many pseudovarieties of interest are not finitely based. For
instance, it is easy to see that, if a pseudovariety is generated by a
single algebra, then it is decidable, but it may not be finitely
based, an important example being the pseudovariety generated by the
syntactic monoid $B_2^1$ of the language $(ab)^*$ over the 2-letter
alphabet \cite{Perkins:1968,Sapir:1988a}. Moreover, contrary to a
conjecture proposed by the first author \cite{Almeida:1994a}, a
pseudovariety for which the membership problem is solvable in
polynomial time may not admit a finite basis of pseudoidentities
\cite{Volkov:1995}. Sapir has even shown that there is a finite
semigroup that generates such a pseudovariety
\cite[Theorem~3.53]{Kharlampovich&Sapir:1994}. It has recently been
announced by M.~Jackson that the membership problem for $\pv V(B_2^1)$
is NP-hard and so, provided $\mathrm{P}\ne \mathrm{NP}$, that problem
cannot be decided in polynomial time, which would solve
\cite[Problem~3.11]{Kharlampovich&Sapir:1994}.

Pseudovarieties are often described by (infinite) generating sets of
algebras. This comes about by applying some natural operator on other
pseudovarieties, like the join in the lattice of pseudovarieties. In
general, for any construction $C(S_1,\ldots,S_n)$ of an algebra from
given algebras $S_i$, perhaps under suitable restrictions or
additional data (like in the definition of semidirect product, where
an action of one of the factors on the other is required), one may
consider the pseudovariety $C(\pv V_1,\ldots,\pv V_n)$ generated by
all algebras of the form $C(S_1,\ldots,S_n)$ with each $S_i$ in a
given pseudovariety $\pv V_i$. The join is obtained in this way by
considering the usual direct product. Another type of operator of
interest is the following: for two pseudovarieties \pv V and \pv W,
their \emph{Mal'cev product}%
\index{product!Mal'cev --} %
$\pv V\malcev\pv W$ is the pseudovariety generated by all algebras $S$
for which there is a congruence $\theta$ such that $S/\theta$ belongs
to~\pv W, and each class which is a subalgebra belongs to~\pv V.

Since most such natural operators in the case of semigroups do not
preserve decidability
\cite{Albert&Baldinger&Rhodes:1992,Auinger&Steinberg:2001b}, it is of
interest to develop methods that, under suitable additional
assumptions on the given pseudovarieties, guarantee that the operator
produces a decidable pseudovariety. The starting point in the
profinite approach is to obtain a basis of pseudoidentities for the
resulting pseudovariety. In the context of semigroups and monoids,
bases theorems of this kind have been established for Mal'cev products
\cite{Pin&Weil:1996a} and various types of semidirect products
\cite{Almeida&Weil:1996}. Unfortunately, there is a gap in the proof
of the latter, so that the results are only known to hold under
certain additional finiteness hypotheses.\footnote{See
  \cite{Rhodes&Steinberg:2009qt} for a discussion and a general basis
  theorem, which in turn has not led to decidability results.} The
bases provided by such theorems for a binary operator $C(\pv V,\pv W)$
consist of pseudoidentities which are built from pseudoidentities
determined by~\pv V by substituting the variables by certain implicit
operations. The implicit operations that should be considered to test
membership in~$C(\pv V,\pv W)$ of a given finite $A$-generated algebra
$S$ are the solutions of certain systems of equations in~\Om AW,
determined by the operator $C$, subject to regular constraints
determined by each specific evaluation of the variables in~$S$ which
is to be tested. This approach was first introduced
in~\cite{Almeida:1996d,Almeida:1999b}, improved
in~\cite{Almeida&Steinberg:2000a}, and later extended
in~\cite{Almeida:2002a} and, independently and in a much more
systematic way, also in~\cite{Rhodes&Steinberg:2009qt}. The reader is
referred to~\cite{Almeida:1999b,Almeida&Steinberg:2000a,Almeida:2002a}
for the proofs of the results presented in this section.

We proceed to formalize the above ideas. Consider a set $\Sigma$ of
pseudoidentities, which we view as a \emph{system of equations}%
\index{equations!system of --}. %
The sides of the equations $u=v$ in~$\Sigma$ are implicit operations
$u,v\in\Om XU$ on a suitable ambient pseudovariety \pv U over a fixed
alphabet~$X$, whose letters are called the \emph{variables} of the
system. We may say that $\Sigma$ consists of \emph{\pv U-equations}%
\index{$\mathsf{U}$-equations} %
\index{equations!$\mathsf{U}$--} %
to emphasize this condition. Additionally, we impose for each variable
$x$ a clopen constraint $K_x\subseteq\Om AU$ over another fixed
alphabet. The constraints are thus recognizable subsets of~\Om AU. We
say that the constrained system has a \emph{solution}%
\index{equations!solution of system of --} %
$\gamma$ in an $A$-generated pro-\pv U algebra $T$ if $\gamma:X\to\Om
AU$ is a function such that the following two conditions hold, where
$\hat\gamma:\Om XU\to\Om AU$ and $\pi:\Om AU\to T$ are the unique
continuous homomorphisms respectively extending $\gamma$ and respecting
the choice of generators of~$T$:
\begin{enumerate}
\item for each variable $x\in X$, the constraint $\gamma(x)\in K_x$ is
  satisfied;
\item for each equation $u=v$ in~$\Sigma$, the equality
  $\pi(\hat\gamma(u))=\pi(\hat\gamma(v))$ holds.
\end{enumerate}
The following is a simple compactness result which can be found for
instance in~\cite{Almeida:2002a}.

\begin{theorem}\label{t:ALCO-systems-compactness-solutions}
  A system of\/ \pv U-equations over a set of variables $X$ with
  clopen constraints $K_x\subseteq\Om AU$ ($x\in X$) has a solution in
  every $A$-generated algebra from a given subpseudovariety \pv V
  of\/~\pv U if and only if it has a solution in~\Om AV.
\end{theorem}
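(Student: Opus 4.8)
The plan is to prove the two implications separately. One direction --- from a solution in $\Om AV$ to a solution in every $A$-generated algebra of $\pv V$ --- is a direct factorization, and I would do it first. Assuming $\gamma:X\to\Om AU$ solves the system in $\Om AV$, take an arbitrary $A$-generated $T\in\pv V$; because $T\in\pv V$ and its projection $\pi_T:\Om AU\to T$ respects the choice of generators, $\pi_T$ factors as $\pi_T=\rho_T\circ\pi_V$ through the natural projection $\pi_V:\Om AU\to\Om AV$. The constraints $\gamma(x)\in K_x$ are intrinsic to $\gamma$, while applying $\rho_T$ to each equality $\pi_V(\hat\gamma(u))=\pi_V(\hat\gamma(v))$ turns a solution in $\Om AV$ into one in $T$.

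The substantial direction --- assembling a single solution in $\Om AV$ out of solutions in all the finite $A$-generated algebras of $\pv V$ --- I would treat by compactness in the space $\prod_{x\in X}K_x$. Each $K_x$ is clopen in the compact algebra $\Om AU$, hence compact, so the product is compact, and I view candidate solutions as its points. The key observation is that, for each side $u\in\Om XU$ of an equation, the map $\gamma\mapsto\hat\gamma(u)$ is exactly the interpretation $\bar u_{\Om AU}:(\Om AU)^X\to\Om AU$ of $u$ regarded as an implicit operation on pro-$\pv U$ algebras, and this map is continuous by Proposition~\ref{p:ALCO-implicit-operations}. Consequently, for each $A$-generated $T\in\pv V$, the solution set
$$C_T=\bigl\{\gamma\in{\textstyle\prod_{x\in X}K_x} : \pi_T(\hat\gamma(u))=\pi_T(\hat\gamma(v))\text{ for every }(u=v)\in\Sigma\bigr\}$$
is closed: for each equation it is the preimage of the (clopen) diagonal of the finite set $T\times T$ under a continuous map, and $C_T$ is the intersection of these over the equations of $\Sigma$. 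By hypothesis every $C_T$ is nonempty.

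It then remains to verify the finite intersection property and to identify the intersection with a solution in $\Om AV$. Given $A$-generated algebras $T_1,\dots,T_m$ of $\pv V$, I would form the $A$-generated subalgebra $T$ of $T_1\times\cdots\times T_m$, namely the image of $(\pi_{T_1},\dots,\pi_{T_m}):\Om AU\to T_1\times\cdots\times T_m$; it lies in $\pv V$ by closure under finite products and subalgebras and is $A$-generated, so some $\gamma\in C_T$ exists, and since the $i$th coordinate projection composes with $\pi_T$ to give $\pi_{T_i}$, this $\gamma$ lies in $\bigcap_{i=1}^m C_{T_i}$. Compactness of $\prod_{x\in X}K_x$ then yields a point $\gamma\in\bigcap_T C_T$; since $\Om AV=\varprojlim T$ over the $A$-generated algebras of $\pv V$, equality in $\Om AV$ is detected coordinatewise, so this $\gamma$ satisfies $\pi_V(\hat\gamma(u))=\pi_V(\hat\gamma(v))$ for every equation as well as the constraints, i.e.\ it solves the system in $\Om AV$. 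The only step needing genuine care is the continuity of the evaluation maps $\gamma\mapsto\hat\gamma(u)$; once Proposition~\ref{p:ALCO-implicit-operations} supplies it, the rest is the routine packaging of a finite-intersection-property argument and presents no real obstacle.
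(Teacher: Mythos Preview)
The paper does not give its own proof of this theorem; it merely labels it ``a simple compactness result which can be found for instance in~[Almeida:2002a].'' Your argument is correct and is exactly the standard compactness proof one would expect: the easy direction via factorization $\pi_T=\rho_T\circ\pi_V$, and the substantive direction by intersecting the closed nonempty solution sets $C_T$ inside the compact product $\prod_{x\in X}K_x$, with continuity of $\gamma\mapsto\hat\gamma(u)$ supplied by Proposition~\ref{p:ALCO-implicit-operations} and the finite intersection property handled by passing to the $A$-generated subalgebra of a finite product.
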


If the set of variables $X$~is finite, which we assume from hereon,
then there is a continuous homomorphism $\varphi:\Om AU\to S$ into a
finite algebra $S$ which recognizes all the given constraints
$K_x\subseteq\Om AU$ ($x\in X$). Then the existence of a solution for
the system in an $A$-generated algebra $T\in\pv U$ is equivalent to
the existence of a solution in~$T$ for the same system for at least
one of a certain set of constraints of the form $K'_x=\varphi^{-1}(s)$
with $s\in S$. Thus, one may prefer to give the constraints in the form
of a function $X\to S$ into an $A$-generated finite algebra~$S$.

Another formulation of the above ideas is in terms of relational
morphisms, which is the perspective initially taken
in~\cite{Almeida:1999b} and which prevails
in~\cite{Rhodes&Steinberg:2009qt}. A \emph{relational morphism}%
\index{relational morphism}%
\index{morphism!relational --} %
between two topological algebras $S$ and $T$ is a closed subalgebra
$\mu$ of the direct product $S\times T$ whose projection in the first
component is onto. Note that, if $S$ and $T$ are pro-\pv U algebras
then so is $\mu$ and if $\mu$ is $A$-generated, then the induced
continuous homomorphisms $\varphi:\Om AU\to S$ and $\psi:\Om AU\to T$
are such that $\mu$ is obtained by composing the relations
$\varphi^{-1}\subseteq S\times\Om AU$ and $\psi\subseteq\Om AU\times
T$. This is called a \emph{canonical factorization} of~$\mu$.

An example of such a relational morphism is obtained as follows. Let
$\varphi:A\to S$ be a generating mapping for a pro-\pv U algebra $S$ and
let \pv V be a subpseudovariety of~\pv U. Consider the unique
continuous homomorphisms $\hat\varphi:\Om AU\to S$ and $\psi:\Om AU\to\Om
AV$ respecting the choice of generators. Then $\mu_{\pv
  V,A}=\hat\varphi^{-1}\psi$ is a relational morphism from $S$ to~\Om AV.

We say that the system of \pv U-equations $\Sigma$ with constraints
given by a function $\xi:X\to S$ into a finite algebra $S$ is
\emph{inevitable} with respect to a relational morphism $\mu\subseteq
S\times T$, where $T$~is a profinite algebra, if there is a continuous
homomorphism $\delta:\Om XU\to T$ such that the following conditions
hold:
\begin{enumerate}
\item for each variable $x\in X$, the constraint
  $(\xi(x),\delta(x))\in\mu$ is satisfied;
\item for each equation $u=v$ in~$\Sigma$, the equality
  $\delta(u)=\delta(v)$ holds.
\end{enumerate}
One can easily check that this property is equivalent to the existence
of a solution of the system subject to the constraints
$K_x=\hat\varphi^{-1}(\xi(x))\subseteq\Om AU$, where
$\mu=\hat\varphi^{-1}\psi$ is the canonical factorization associated with
a finite generating set~$A$ for~$\mu$.
Theorem~\ref{t:ALCO-systems-compactness-solutions} then yields the
following similar compactness theorem for inevitability.

\begin{theorem}\label{t:ALCO-systems-compactness-inevitability}
  For a system of\/ \pv U-equations over a finite set $X$, of
  variables, with constraints given by a
  mapping $X\to S$ into a finite algebra $S$ and a subpseudovariety
  \pv V of\/~\pv U, the following conditions are equivalent:
  \begin{enumerate}
  \item the constrained system is inevitable with respect to every
    relational morphism $\mu$ from $S$ into an arbitrary algebra
    from~\pv V;
  \item the constrained system is inevitable with respect to every
    relational morphism $\mu$ from $S$ into an arbitrary pro-\pv V
    algebra;
  \item for \emph{some} finite generating set $A$ of~$S$, the constrained
    system is inevitable with respect to the relational morphism
    $\mu_{\pv V,A}$;
  \item for \emph{every} finite generating set $A$ of~$S$, the constrained
    system is inevitable with respect to the relational morphism
    $\mu_{\pv V,A}$.
  \end{enumerate}
\end{theorem}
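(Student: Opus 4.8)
The plan is to prove the four conditions equivalent by combining two substantial implications, $(1)\Rightarrow(3)$ and $(3)\Rightarrow(2)$, with the evident inclusions $(2)\Rightarrow(1)$, $(2)\Rightarrow(4)$, and $(4)\Rightarrow(3)$; together these make the diagram strongly connected. The easy implications are immediate: every finite algebra of~\pv V is a pro-\pv V algebra, so (2) specializes to~(1); each $\mu_{\pv V,A}$ is a relational morphism from~$S$ into the pro-\pv V algebra~\Om AV, so (2) specializes to~(4); and (4) gives~(3) because the finite algebra~$S$ has at least one finite generating set. The main tool is the reduction recorded just before this theorem: inevitability of the constrained system with respect to $\mu_{\pv V,A}=\hat\varphi^{-1}\pi$ is the same as the existence of a solution in~\Om AV of the system under the constraints $K_x=\hat\varphi^{-1}(\xi(x))$, where $\hat\varphi\colon\Om AU\to S$ is the generating homomorphism and $\pi\colon\Om AU\to\Om AV$ is the canonical projection respecting generators.

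For $(1)\Rightarrow(3)$ I fix a finite generating set~$A$ of~$S$ and let $T'$ be any $A$-generated algebra of~\pv V, with generating homomorphism $\pi'\colon\Om AU\to T'$. The image $\mu'$ of $(\hat\varphi,\pi')\colon\Om AU\to S\times T'$ is a closed subalgebra whose first projection is onto~$S$, hence a relational morphism from~$S$ into the finite algebra $T'\in\pv V$. By~(1) there is a continuous homomorphism $\delta\colon\Om XU\to T'$ with $(\xi(x),\delta(x))\in\mu'$ for all $x$ and $\delta(u)=\delta(v)$ for every equation. Each membership $(\xi(x),\delta(x))\in\mu'$ yields some $w_x\in\Om AU$ with $\hat\varphi(w_x)=\xi(x)$ and $\pi'(w_x)=\delta(x)$; putting $\gamma(x)=w_x$ gives $\gamma(x)\in K_x$, and since $\delta$ and $\pi'\circ\hat\gamma$ agree on the generators~$X$ they coincide, whence $\pi'(\hat\gamma(u))=\pi'(\hat\gamma(v))$. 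Thus $\gamma$ solves the $K_x$-constrained system in~$T'$. As $T'$ is arbitrary, the system is solvable in every $A$-generated algebra of~\pv V, and Theorem~\ref{t:ALCO-systems-compactness-solutions} delivers a solution in~\Om AV; by the reduction above this is condition~(3).

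For $(3)\Rightarrow(2)$ let $A$ be a generating set witnessing~(3); the reduction gives a solution $\gamma\colon X\to\Om AU$ in~\Om AV, so $\hat\varphi(\gamma(x))=\xi(x)$ and $\pi(\hat\gamma(u))=\pi(\hat\gamma(v))$. Let $\mu\subseteq S\times T$ be an arbitrary relational morphism into a pro-\pv V algebra~$T$. Its first projection is onto, so for each $a\in A$ I may choose $(\hat\varphi(a),t_a)\in\mu$; as $\mu$ is a closed subalgebra of the pro-\pv U algebra $S\times T$, freeness of~\Om AU extends $a\mapsto(\hat\varphi(a),t_a)$ to a continuous homomorphism $\alpha\colon\Om AU\to\mu$, and with $\beta=\mathrm{pr}_2\circ\alpha$ one has $(\hat\varphi(w),\beta(w))\in\mu$ for every $w\in\Om AU$. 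Since $T$ is pro-\pv V, the homomorphism $\beta\colon\Om AU\to T$ factors as $\beta=\bar\beta\circ\pi$ through~\Om AV. Setting $\delta=\beta\circ\hat\gamma\colon\Om XU\to T$, the constraints hold because $(\xi(x),\delta(x))=(\hat\varphi(\gamma(x)),\beta(\gamma(x)))\in\mu$, and the equations hold because $\delta(u)=\bar\beta(\pi(\hat\gamma(u)))=\bar\beta(\pi(\hat\gamma(v)))=\delta(v)$. Hence $\mu$ is inevitable, establishing~(2).

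The crux is the implication $(3)\Rightarrow(2)$, which must turn a single solution living in the one free object~\Om AV into a witness for every relational morphism into an arbitrary pro-\pv V target. The two ideas that make this work are the lifting of the generators of~$S$ into the closed subalgebra~$\mu$ via freeness of~\Om AU, and the fact that any continuous homomorphism from~\Om AU into a pro-\pv V algebra factors through~\Om AV; the solution is then transported along the induced map~$\beta$. The other genuine ingredient is the appeal to Theorem~\ref{t:ALCO-systems-compactness-solutions} inside $(1)\Rightarrow(3)$, which is precisely what upgrades solvability in all finite $A$-generated members of~\pv V to solvability in~\Om AV. Everything else reduces to checking equalities on the finite generating sets~$A$ and~$X$.
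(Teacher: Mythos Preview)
Your proof is correct and follows precisely the route the paper indicates: the paper does not give a detailed proof but only states that Theorem~\ref{t:ALCO-systems-compactness-solutions} yields the result via the equivalence, recorded just before the statement, between inevitability with respect to a relational morphism and solvability of the constrained system in the target. Your write-up is a careful unpacking of that indication, using the reduction to translate between inevitability for $\mu_{\pv V,A}$ and solvability in~$\Om AV$, invoking Theorem~\ref{t:ALCO-systems-compactness-solutions} for $(1)\Rightarrow(3)$, and using freeness of~$\Om AU$ over pro-\pv U algebras together with the factorization of any continuous homomorphism $\Om AU\to T$ (with $T$ pro-\pv V) through~$\Om AV$ for $(3)\Rightarrow(2)$.
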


Let \pv V be a subpseudovariety of~\pv U. We say that a constrained
system is \emph{\pv V-inevitable} if it satisfies the equivalent
conditions of Theorem~\ref{t:ALCO-systems-compactness-inevitability}.
The pseudovariety \pv V is said to be \emph{hyperdecidable} with
respect to a class \Cl S of systems of \pv U-equations with
constraints in algebras from~\pv U if there is an algorithm that
decides, for each constrained system in~\Cl S, whether it is \pv
V-inevitable.

An approach to prove hyperdecidability which was devised by Steinberg
and the first author
\cite{Almeida&Steinberg:2000a,Almeida&Steinberg:2000b}, inspired by
seminal work of Ash~\cite{Ash:1991}, was to draw this property from
other either more familiar or more conceptual properties. Assume that
the class \Cl S consists of finite systems, that it is recursively
enumerable, and that the implicit operations that appear on the sides
of the equations of the systems are computable. Moreover, suppose that
\pv V is recursively enumerable. One can then effectively check
whether a constrained system in~\Cl S is inevitable with respect to a
relational morphism from the constraining algebra into an algebra
from~\pv V, which gives a semi-algorithm to enumerate the constrained
systems which are not \pv V-inevitable. To decide whether a constrained
system from~\Cl S has a solution in~\Om AV it thus suffices to add
hypotheses to guarantee that there is also a semi-algorithm to
enumerate the systems that are \pv V-inevitable. To do so, the idea is
to prove that if the system is \pv V-inevitable, then there is a
solution of a special kind, so that the candidates for such special
solutions can be effectively enumerated and whether such a candidate
is indeed a solution can be effectively checked.

To formalize this idea, consider a recursively enumerable set $\tau$
of computable implicit operations on~\pv U, including the basic
operations. We call such a set $\tau$ a \emph{computable implicit
  signature}%
\index{computable implicit signature}%
\index{implicit signature!computable --}%
\index{signature!computable implicit --} %
over~\pv U. Note that every pro-\pv U algebra has automatically the
structure of a $\tau$-algebra (see
Proposition~\ref{p:ALCO-implicit-operations}). For a subpseudovariety
\pv V of~\pv U, we denote by \omt AV%
\index{$\Omega_A^\tau\mathsf{V}$} %
the $\tau$-subalgebra of~\Om AV generated by~$A$. It follows from the
definition of free pro-\pv V algebra that \omt AV is freely generated
by~$A$ in the variety of $\tau$-algebras generated by~\pv V. The
\emph{word problem}%
\index{word problem} %
for \omt AV consists in, given two $\tau$-terms over the alphabet $A$,
deciding whether they represent the same element of~\omt AV. We may
now state the following key definition.

\begin{definition}\label{d:ALCO-tameness}
  Let \pv V be a recursively enumerable subpseudovariety of~\pv U and
  let \Cl S be a class of constrained systems of \pv U-equations. We
  say that \pv V is \emph{$\tau$-reducible}%
  \index{reducible}%
  \index{equations!reducible system of --} %
  with respect to~\Cl S if, whenever a constrained system in~\Cl S has
  a solution $\gamma:X\to\Om AU$ in~\Om AV, it has a solution
  $\gamma':X\to\omt AU$ in~\Om AV.\footnote{A topological formulation
    of the notion of $\tau$-reducibility was recently found in
    \cite{Almeida&Klima:2017a}. It simply states that, for each system from
    \Cl S, forgetting the constraints, the solutions in \Om AV from
    \Cl S taking values in \omt AU are dense in the set of all
    solutions in~\Om AV.} If, moreover, the word problem for~\omt AV
  is decidable, then we say that \pv V is \emph{$\tau$-tame}%
  \index{tame}%
  \index{pseudovariety!tame --} %
  with respect to~\Cl S. We say that \pv V is \emph{completely
    $\tau$-tame}%
  \index{completely $\tau$-tame}%
  \index{pseudovariety!completely $\tau$-tame --} %
  if it is $\tau$-tame with respect to the class of all finite
  constrained systems of equations of $\tau$-terms.
\end{definition}

The following result summarizes the above discussion.

\begin{theorem}\label{t:ALCO-tame-vs}
  Let \pv U be a recursively enumerable pseudovariety and let $\tau$
  be a computable implicit signature over~\pv U. Let \Cl S be a
  recursively enumerable class of constrained systems of equations
  between $\tau$-terms. Finally, let \pv V be a subpseudovariety
  of\/~\pv U. If\/ \pv V~is $\tau$-tame with respect to~\Cl S, then
  \pv V~is hyperdecidable with respect to~\Cl S.
\end{theorem}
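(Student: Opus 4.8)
The plan is to show that $\tau$-tameness yields an algorithm to decide, for each constrained system in $\Cl S$, whether it is $\pv V$-inevitable. The key observation is that $\pv V$-inevitability is equivalent (by Theorem~\ref{t:ALCO-systems-compactness-inevitability} together with the translation between inevitability and solvability recorded just before that theorem) to the existence of a solution of the constrained system in $\Om AV$. So the task reduces to deciding, given a system from $\Cl S$ with constraints specified by a map $\xi:X\to S$ into a finite $A$-generated algebra, whether that system has a solution in $\Om AV$.

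First I would set up two complementary semi-algorithms whose combination terminates on every input. For the negative side, I would enumerate the candidate relational morphisms $\mu$ from $S$ into algebras of $\pv V$: since $\pv V$ is recursively enumerable and the implicit operations appearing in the equations are computable, one can effectively test, for each such finite $\mu$, whether the system fails to be inevitable with respect to it, thereby semi-deciding non-$\pv V$-inevitability (equivalently, non-solvability in $\Om AV$). This is exactly the semi-algorithm sketched in the discussion preceding Definition~\ref{d:ALCO-tameness}.

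For the positive side, I would invoke $\tau$-reducibility to replace the search for arbitrary solutions $\gamma:X\to\Om AU$ by a search over the countable, recursively enumerable set of $\tau$-term solutions $\gamma':X\to\omt AU$. Concretely, by $\tau$-reducibility the system has a solution in $\Om AV$ if and only if it has one taking values in $\omt AU$; one enumerates all tuples of $\tau$-terms over $X$, and for each candidate checks both that the constraints $\gamma'(x)\in K_x$ hold and that each equation $u=v$ is satisfied in $\Om AV$. The constraint checks are decidable because the $K_x$ are clopen (hence $\pv Q$-recognizable in the sense of Proposition~\ref{p:ALCO-pro-Q-uniformity}) and the $\tau$-terms are computable, while verifying $\hat\gamma'(u)=\hat\gamma'(v)$ in $\Om AV$ amounts to instances of the word problem for $\omt AV$, which is decidable precisely because $\pv V$ is $\tau$-\emph{tame} (not merely $\tau$-reducible). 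This gives a semi-algorithm enumerating the $\pv V$-inevitable systems.

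Running the two semi-algorithms in parallel decides $\pv V$-inevitability: every constrained system in $\Cl S$ is either $\pv V$-inevitable or not, and exactly one of the two procedures halts on it. The main obstacle, and the place where the tameness hypothesis does the essential work, is the positive side: without $\tau$-reducibility there is no reason the solution space should be approximable by $\tau$-terms, and without decidability of the word problem for $\omt AV$ one could not certify a candidate $\tau$-term tuple as a genuine solution. I expect the remaining bookkeeping---recursive enumerability of $\Cl S$ guaranteeing effective access to the systems, and the reduction of the general constraint formulation to the form $\xi:X\to S$ described before the definition---to be routine.
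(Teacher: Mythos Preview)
Your proposal is correct and follows essentially the same approach as the paper: the paper presents no separate proof but states that the theorem ``summarizes the above discussion,'' and that discussion is precisely the two-semi-algorithm strategy you describe---enumerating relational morphisms into members of~\pv V to witness non-inevitability, and enumerating $\tau$-term candidate solutions (checking constraints via recognizability and equations via the decidable word problem for~\omt AV) to witness inevitability.
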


Several important examples of tame pseudovarieties are discussed in
Subsection~\ref{sec:ALCO-tameness-pvs}. Here, we only present tameness
results which hold in the general algebraic context to which this
section is dedicated. Before doing so, we introduce a weaker version
of tameness which is also of interest.

Let $S$ be an $A$-generated algebra from~\pv U and let $\tau$ be a
computable implicit signature. The relational morphism $\bar\mu_{\pv
  V,A}^\tau\subseteq S\times\omt AV$ is obtained by taking the
intersection of~$\mu_{\pv V,A}$ with $S\times\omt AV$. We say that \pv
V is \emph{weakly $\tau$-reducible}%
\index{weakly reducible pseudovariety}%
\index{pseudovariety!weakly reducible --} %
for a class \Cl S of constrained systems of \pv U-equations if, for
every \pv V-inevitable constrained system in~\Cl S, say with
constraints in the $A$-generated algebra $S\in\pv U$, the system is
inevitable with respect to the relational morphism $\bar\mu_{\pv
  V,A}^\tau$. Replacing $\tau$-reducibility by weak
$\tau$-reducibility in the definition of $\tau$-tameness we speak of
\emph{weak $\tau$-tameness}%
\index{weak tameness}%
\index{tame!weakly --}%
\index{pseudovariety!weakly tame --}. %

Viewing \omt AV as a discrete algebra, there is another natural
relational morphism $\mu_{\pv V,A}^\tau%
\index{$\mu_{\pv V,A}^\tau$} %
\subseteq S\times\omt AV$, namely the $\tau$-subalgebra generated by
the pairs of the form $(a,a)$ with $a\in A$. The notation is justified
since, as it is easily proved, the relation $\bar\mu_{\pv V,A}^\tau$%
\index{$\bar\mu_{\pv V,A}^\tau$} %
is the closure of $\mu_{\pv V,A}^\tau$ in $S\times\omt AV$ with
respect to the discrete topology in the first component and the
pro-\pv V topology in the second component. We say that \pv V is
\emph{$\tau$-full}%
\index{pseudovariety!$\tau$-full --} %
if the two relational morphisms coincide for every $A$-generated
algebra $S$ from~\pv U. Note that a weakly $\tau$-reducible
$\tau$-full pseudovariety is $\tau$-reducible. Conversely, the
terminology is justified by the fact that, if \pv V is
$\tau$-reducible with respect to a constrained system $\Sigma$ of \pv
U-equations, then it is also weakly $\tau$-reducible with respect
to~$\Sigma$.

We say that the pseudovariety \pv V has \emph{computable
  $\tau$-closures}%
\index{pseudovariety!has computable $\tau$-closures} %
if there is an algorithm such that, given a finite alphabet $A$, a
regular subset $L$ of~\om AV and an element $v\in\omt AV$, determines
whether or not $v$ belongs to the closure of $L$ in the pro-\pv V
topology of~\omt AV. The following combines a couple of results from
\cite{Almeida&Steinberg:2000a}.

\begin{theorem}\label{t:ALCO-weak-reducibility}
  Let \pv V be a recursively enumerable subpseudovariety of a
  recursively enumerable pseudovariety \/~\pv U, let
  $\tau$ be a computable implicit signature, and suppose that the word
  problem for each \omt AV is decidable.
  \begin{enumerate}
  \item If\/ \pv V is $\tau$-full then \pv V has computable $\tau$-closures.
  \item If\/ \pv V is weakly $\tau$-reducible for a class \Cl S of
    constrained systems of\/ \pv U-equations and \pv V has computable
    $\tau$-closures, then \pv V is hyperdecidable with respect to~\Cl
    S.
  \end{enumerate}
\end{theorem}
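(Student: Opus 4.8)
The plan is to prove each part by exhibiting a pair of complementary semi-algorithms, so that running them in parallel decides the relevant question. For part~(1), the first step is to read off closure membership from the relational morphism $\mu_{\pv V,A}$. Present the given regular set as $L=\eta^{-1}(P)$ for a homomorphism $\eta$ onto a finite algebra $S$, which I take to lie in \pv U and to be $A$-generated via $\varphi(a)=\eta(a)$ (this is automatic when \pv U is the pseudovariety of all finite algebras, and is the only case needed for the application to part~(2)). Writing $\hat\varphi\colon\Om AU\to S$ and $\psi\colon\Om AU\to\Om AV$ for the continuous homomorphisms respecting the generators and $\rho=\psi|_{\om AU}\colon\om AU\to\om AV$ for the canonical projection, I would verify that $\hat\varphi^{-1}(P)$ is clopen with $\hat\varphi^{-1}(P)\cap\om AU=\rho^{-1}(L)$, so that density of \om AU in \Om AU (Proposition~\ref{p:ALCO-om_AV-dense-in-Om_AV}) gives $\hat\varphi^{-1}(P)=\overline{\rho^{-1}(L)}$; applying the continuous map $\psi$ on the compact space \Om AU then yields $\psi(\hat\varphi^{-1}(P))=\overline{\psi(\rho^{-1}(L))}=\overline L$, using surjectivity of $\rho$. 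Hence, for $v\in\omt AV$, one has $v\in\overline L$ if and only if $(s,v)\in\mu_{\pv V,A}$ for some $s\in P$, i.e. $(s,v)\in\bar\mu_{\pv V,A}^\tau$ for some $s\in P$.

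The point of $\tau$-fullness is now that it replaces the topologically-defined relation $\bar\mu_{\pv V,A}^\tau$ by the syntactically generated $\mu_{\pv V,A}^\tau$, which is recursively enumerable: its elements are exactly the pairs $(t_S,t_{\omt AV})$ obtained by evaluating a $\tau$-term $t$ over $A$ on the generators $(\varphi(a),a)$. Enumerating all $\tau$-terms $t$, computing $t_S\in S$ (a finite computation, $S$ being a $\tau$-algebra with computable operations), and testing both $t_S\in P$ and $t_{\omt AV}=v$ (the latter by the assumed decidability of the word problem for \omt AV) yields a semi-algorithm confirming $v\in\overline L$. Complementarily, $v\notin\overline L$ holds exactly when some continuous homomorphism $\alpha\colon\Om AV\to T$ with $T\in\pv V$ separates $v$ from $L$, i.e. $\alpha(v)\notin\alpha(L)$; since \pv V is recursively enumerable, one enumerates the finitely many maps $A\to T$ for each such $T$, computes the finite set $\alpha(L)$ and the value $\alpha(v)$, and halts upon finding a separator. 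Exactly one procedure halts, deciding $v\in\overline L$; this establishes that \pv V has computable $\tau$-closures.

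For part~(2), I would decide \pv V-inevitability of a constrained system with constraints $\xi\colon X\to S$ (with $S\in\pv U$ finite and $A$-generated) again by two complementary semi-algorithms. The semi-algorithm for non-inevitability rests on condition~(1) of Theorem~\ref{t:ALCO-systems-compactness-inevitability}: the system fails to be \pv V-inevitable precisely when it fails to be inevitable with respect to some relational morphism from $S$ into some finite $T\in\pv V$. As \pv V is recursively enumerable and, for fixed finite $T$, inevitability with respect to a given $\mu\subseteq S\times T$ is decidable (finitely many candidate homomorphisms $\delta\colon\Om XU\to T$, each tested on the finitely many constraints and on the computable $\tau$-term equations), enumerating all such $(T,\mu)$ detects non-inevitability. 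For the other direction, weak $\tau$-reducibility guarantees that a \pv V-inevitable system is inevitable with respect to $\bar\mu_{\pv V,A}^\tau$, and conversely such inevitability forces \pv V-inevitability since $\bar\mu_{\pv V,A}^\tau\subseteq\mu_{\pv V,A}$, so that the same witness establishes inevitability with respect to $\mu_{\pv V,A}$ (condition~(3) of Theorem~\ref{t:ALCO-systems-compactness-inevitability}). Thus it suffices to semi-decide inevitability with respect to $\bar\mu_{\pv V,A}^\tau$: I enumerate candidate witnesses $\delta\colon X\to\omt AV$ (tuples of $\tau$-terms over $A$), check the equations $\delta(u)=\delta(v)$ in \omt AV by the word problem, and check each constraint $(\xi(x),\delta(x))\in\bar\mu_{\pv V,A}^\tau$. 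The last check is the crux: unwinding $\bar\mu_{\pv V,A}^\tau=\mu_{\pv V,A}\cap(S\times\omt AV)$ as in part~(1) shows it to be the condition $\delta(x)\in\overline{L_x}$, where $L_x=\rho\bigl((\hat\varphi|_{\om AU})^{-1}(\xi(x))\bigr)$ is a regular subset of \om AV computable from $\xi$ and $S$; this is precisely a computable-$\tau$-closures query, available by hypothesis.

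I expect the main obstacle to be the constraint translation in part~(2) — recognizing that membership in the closed relational morphism $\bar\mu_{\pv V,A}^\tau$ is, coordinate-wise, a closure-membership assertion $\delta(x)\in\overline{L_x}$ for an effectively describable regular language $L_x\subseteq\om AV$, and that weak $\tau$-reducibility is exactly what licenses searching only among $\tau$-valued witnesses $\delta$ rather than among arbitrary solutions. The analogous identification in part~(1), namely that $\tau$-fullness turns the a priori merely closed relation $\bar\mu_{\pv V,A}^\tau$ into a recursively enumerable one via its generation by $\{(\varphi(a),a)\}$, is the conceptual heart of that part. The remaining verifications — in particular the regularity and computability of the image languages $L_x$, which in the settings of interest follows from $\rho$ being a surjective morphism of relatively free algebras — are routine, and I would not belabor them.
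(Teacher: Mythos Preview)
The paper does not supply its own proof of this theorem; it simply states that the result ``combines a couple of results from \cite{Almeida&Steinberg:2000a}''. The argumentative strategy is, however, sketched in the prose immediately preceding Theorem~\ref{t:ALCO-tame-vs}: a semi-algorithm for non-inevitability by enumeration of relational morphisms into members of \pv V, paired with a semi-algorithm for inevitability that searches among witnesses of a restricted (enumerable, checkable) kind. Your proposal follows exactly this template and supplies the details correctly; the identification in part~(1) of closure membership with membership in $\bar\mu_{\pv V,A}^\tau$, and the observation that $\tau$-fullness collapses this to the syntactically generated $\mu_{\pv V,A}^\tau$, is precisely the intended mechanism.

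Two small points deserve mention. First, your constraint check in part~(2) implicitly assumes the equations $u=v$ in the class \Cl S are between $\tau$-terms (so that $\delta(u),\delta(v)\in\omt AV$ and the word problem applies); the theorem as stated does not literally say this, but the parallel Theorem~\ref{t:ALCO-tame-vs} does impose it, and this is the intended reading in the source \cite{Almeida&Steinberg:2000a}. Second, as you note, the regularity and effective presentability of $L_x=\rho\bigl((\hat\varphi|_{\om AU})^{-1}(\xi(x))\bigr)$ is not automatic for arbitrary signatures, since homomorphic images of recognizable sets need not be recognizable; in the semigroup case (and whenever $\om AU=\om AV$, so $\rho$ is the identity) this is immediate, which covers the applications. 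Neither issue is a gap in your reasoning, only a reminder that the theorem carries ambient hypotheses that are routine in the motivating setting.
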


We say that a class of algebras is \emph{locally finite}%
\index{locally finite} %
\index{algebras!locally finite class of --} %
if all finitely generated algebras in the variety it generates are
finite. This is the case, for instance, for a pseudovariety generated
by a single algebra but not every locally finite pseudovariety is of
this kind. A well-known example in the realm of semigroups is provided
by the pseudovariety of all finite bands (in which every element is
idempotent).

A decidable locally finite pseudovariety \pv V is said to be
\emph{order computable}%
\index{pseudovariety!order computable --} %
if the function that associates with each positive integer $n$ the
cardinality of the algebra \Om nV is computable. It seems to be an
open problem whether every locally finite pseudovariety is order
computable. The following result is an immediate extension
of~\cite[Theorem~4.18]{Almeida&Steinberg:2000a}, which is based on the
``slice theorem'' of Steinberg~\cite{Steinberg:2001b}.

\begin{theorem}\label{t:ALCO-tameness_vs_join_with_order-computable}
  Let \pv V be a $\tau$-tame pseudovariety with respect to a class
  \Cl S of systems of equations and let \pv W be an order-computable
  pseudovariety. Then the join $\pv V\vee\pv W$ is also $\tau$-tame
  with respect to~\Cl S.
\end{theorem}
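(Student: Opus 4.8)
The plan is to check the two defining ingredients of $\tau$-tameness for $\pv V\vee\pv W$ separately: $\tau$-reducibility with respect to~$\Cl S$, and decidability of the word problem for each $\omt A{V\vee W}$ with $A$ finite. Everything rests on one structural observation. Since a pseudoidentity is preserved by homomorphic images, subalgebras and products, it holds throughout the pseudovariety generated by a class precisely when it holds throughout that class; applied to $\pv V\vee\pv W$, this shows that the kernel of the projection $\Om AU\to\Om A{V\vee W}$ is the intersection of the kernels of $\Om AU\to\Om AV$ and $\Om AU\to\Om AW$. Hence the pair of projections yields a continuous embedding $\Om A{V\vee W}\hookrightarrow\Om AV\times\Om AW$. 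As $\pv W$ is order computable it is locally finite, so for finite~$A$ the algebra $\Om AW$ is finite; consequently the fibres of $\pi_W\colon\Om A{V\vee W}\to\Om AW$ are clopen. This is the ``slice'' structure.

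For the word problem, two $\tau$-terms over~$A$ denote the same element of $\omt A{V\vee W}$ if and only if they have equal images in $\omt AV$ and in $\omt AW$. Equality in $\omt AV$ is decidable because $\pv V$ is $\tau$-tame. For $\pv W$, local finiteness gives $\omt AW=\om AW=\Om AW$, a finite algebra. Order computability furnishes the cardinality $|\Om AW|$ and, combined with decidability of membership in~$\pv W$, lets one effectively exhibit $\Om AW$: enumerate finite $A$-generated members of~$\pv W$ until one of the prescribed cardinality is found. Since the implicit operations in~$\tau$ are computable, any $\tau$-term can then be evaluated in this finite algebra, so equality in $\omt AW$ is decidable, and the word problem for $\omt A{V\vee W}$ follows.

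For $\tau$-reducibility, suppose a constrained system $\Sigma\in\Cl S$, with clopen constraints $K_x\subseteq\Om AU$, admits a solution $\gamma\colon X\to\Om AU$ in~$\Om A{V\vee W}$. By the embedding, $\gamma$ is simultaneously a solution in~$\Om AV$ and in~$\Om AW$. Put $w_x=\pi_W(\gamma(x))\in\Om AW$ and tighten each constraint to $K_x\cap\pi_W^{-1}(w_x)$, which is again clopen because $\Om AW$ is finite. The original $\gamma$ still solves this refined system in~$\Om AV$, so $\tau$-reducibility of~$\pv V$ produces a solution $\gamma'\colon X\to\omt AU$ in~$\Om AV$ meeting the refined constraints. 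The refinement forces $\pi_W(\gamma'(x))=w_x=\pi_W(\gamma(x))$ for every~$x$, whence the two continuous homomorphisms $\pi_W\circ\hat{\gamma'}$ and $\pi_W\circ\hat\gamma$ from $\Om XU$ into the finite algebra $\Om AW$ agree on the generators and therefore coincide. Thus every equation of~$\Sigma$ satisfied by~$\gamma$ modulo~$\pv W$ is satisfied by~$\gamma'$ modulo~$\pv W$ as well; together with the fact that $\gamma'$ solves~$\Sigma$ modulo~$\pv V$, the embedding shows that $\gamma'$ is a solution in~$\Om A{V\vee W}$ with values in~$\omt AU$. This establishes $\tau$-reducibility.

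The one point demanding attention is that passing from $\Sigma$ to the slice-refined system keeps us inside~$\Cl S$, which is needed to invoke reducibility of~$\pv V$; this is automatic whenever $\Cl S$ is specified by the shape of its equations with clopen constraints allowed freely, since the refinement only intersects each $K_x$ with a clopen set. Beyond this bookkeeping, the reducibility half is a routine consequence of the finiteness of~$\Om AW$, and the genuinely substantive input is the effective construction of~$\Om AW$ underlying the word-problem half---precisely the role played by order computability and Steinberg's slice theorem.
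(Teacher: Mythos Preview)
Your proof is correct and follows essentially the same approach as the paper, which does not give details in the text but cites \cite[Theorem~4.18]{Almeida&Steinberg:2000a} and Steinberg's slice theorem: the word problem is handled by the embedding $\Om A{V\vee W}\hookrightarrow\Om AV\times\Om AW$ together with the effective construction of the finite algebra $\Om AW$, and reducibility is obtained by refining the constraints along the finitely many clopen fibres of $\Om AU\to\Om AW$ before invoking reducibility of~$\pv V$. Your closing caveat---that the refinement must stay inside~$\Cl S$, which holds under the standard convention that $\Cl S$ is specified by the shape of its equations with arbitrary clopen constraints---is exactly the implicit hypothesis under which the result is stated and applied.
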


One of the ingredients behind the proof of
Theorem~\ref{t:ALCO-tameness_vs_join_with_order-computable} is that
$\Om AW=\omt AW$ for every locally finite pseudovariety \pv W and
every implicit signature $\tau$. Under this weaker property for a
computable implicit signature $\tau$, tameness becomes much simpler.
The following result is a simple corollary of some of the above
results. We do not know whether the $\tau$-fullness hypothesis can be
dropped.

\begin{proposition}\label{p:ALCO-tameness_under_automatic_reducibility}
  Let $\tau$ be a computable implicit signature and let \pv V be a
  recursively enumerable pseudovariety such that the equality $\Om
  AV=\omt AV$ holds for every finite set~$A$ and \pv V is $\tau$-full.
  Then \pv V is completely $\tau$-tame if and only if the word problem
  for each \omt AV is decidable.
\end{proposition}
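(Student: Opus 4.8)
The equivalence splits into one routine direction and one where the hypotheses $\Om AV=\omt AV$ and $\tau$-fullness carry the argument. For the forward implication there is nothing to prove beyond unwinding Definition~\ref{d:ALCO-tameness}: if \pv V is completely $\tau$-tame then it is $\tau$-tame with respect to the class of all finite constrained systems of equations of $\tau$-terms, and $\tau$-tameness explicitly demands that the word problem for each \omt AV be decidable.

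For the converse, I would assume that the word problem for each \omt AV is decidable. Since $\tau$-tameness is $\tau$-reducibility together with a decidable word problem, it remains only to show that \pv V is $\tau$-reducible with respect to the class of all finite constrained systems of equations of $\tau$-terms. The plan is to obtain this via the general fact recorded after Definition~\ref{d:ALCO-tameness}, that a weakly $\tau$-reducible $\tau$-full pseudovariety is $\tau$-reducible; by the $\tau$-fullness hypothesis it therefore suffices to verify weak $\tau$-reducibility.

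The key observation is that $\Om AV=\omt AV$ collapses the two relational morphisms governing weak reducibility. Indeed, $\bar\mu_{\pv V,A}^\tau=\mu_{\pv V,A}\cap(S\times\omt AV)$, and since $\omt AV=\Om AV$ while $\mu_{\pv V,A}$ already lies in $S\times\Om AV$, one gets $\bar\mu_{\pv V,A}^\tau=\mu_{\pv V,A}$. Hence inevitability with respect to $\bar\mu_{\pv V,A}^\tau$ coincides with inevitability with respect to $\mu_{\pv V,A}$, which by Theorem~\ref{t:ALCO-systems-compactness-inevitability} is precisely \pv V-inevitability. Thus every \pv V-inevitable constrained system is automatically inevitable with respect to $\bar\mu_{\pv V,A}^\tau$, so weak $\tau$-reducibility holds trivially; feeding this into $\tau$-fullness yields $\tau$-reducibility, and combining with the assumed decidability of the word problems gives complete $\tau$-tameness.

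I expect no deep obstacle in this argument; the single point needing care is the bookkeeping among the several reducibility notions, and in particular recognizing that $\Om AV=\omt AV$ forces $\bar\mu_{\pv V,A}^\tau=\mu_{\pv V,A}$, which is exactly what trivializes weak reducibility. That $\tau$-fullness cannot simply be omitted is explained by its role as the bridge from weak to strong reducibility, consistent with the remark following the proposition that dropping it is an open question.
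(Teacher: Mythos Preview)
Your argument is correct and is precisely the line the paper has in mind: the statement is presented there without proof, merely as ``a simple corollary of some of the above results,'' and you have identified exactly which results---the implication weak $\tau$-reducibility $+$ $\tau$-fullness $\Rightarrow$ $\tau$-reducibility, together with the observation that $\Om AV=\omt AV$ forces $\bar\mu_{\pv V,A}^\tau=\mu_{\pv V,A}$ and hence trivializes weak $\tau$-reducibility via Theorem~\ref{t:ALCO-systems-compactness-inevitability}. The bookkeeping is handled correctly, including the forward direction straight from Definition~\ref{d:ALCO-tameness}.
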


\section{The case of semigroups}
\label{sec:ALCO-sgps}

The motivation to study profinite topologies in finite semigroup
theory comes from automata and language theory: Eilenberg's
correspondence theorem \cite{Eilenberg:1976} shows the relevance of
investigating pseudovarieties of semigroups and monoids.

The results mentioned in this section by no means cover entirely 
the literature in the area that is presently available. In
particular, we stick to the more classical case of semigroups, while
for instance the cases of ordered semigroups or stamps have come to
play a significant role, as can be seen in Chapter~16. It
turns out that in all these cases the same relatively free profinite
semigroups intervene and the results are also often quite similar,
although sometimes their proofs involve additional technical
difficulties.

\subsection{Computing profinite closures}
\label{sec:ALCO-prof-closures}

There are several reasons why profinite topologies are relevant for
automata theory and Sections~\ref{sec:ALCO-intro}
and~\ref{sec:ALCO-profinite-top-algebras} provide many of them.
We start this subsection by formulating a simple problem which has a
direct translation in terms of profinite topologies.

Let $A$ be a finite alphabet and let $L\subseteq A^+$ be a regular
language. Membership in~$L$ of a word $w\in A^+$ can be effectively
tested by checking whether the action of $w$ on the initial state of
the minimal automaton of~$L$ leads to a final state, or whether the
syntactic image of $w$ belongs to that of~$L$. But, one may be
interested in a weaker test such as whether $w$ may be separated
from~$L$ by a regular language $K$ of a particular type, for instance
a group language (i.e., a language whose syntactic semigroup is a
group): does there exist a group language $K\subseteq A^+$ such that
$w\in K$ and $K\cap L=\emptyset$?

Let \pv G%
\index{$\mathsf{G}_p$} %
denote the pseudovariety of all finite groups. In view of
Proposition~\ref{p:ALCO-pro-Q-uniformity}\eqref{item:ALCO-pro-Q-uniformity-3},
the above separation property is equivalent to being able to separate
$w$ from~$L$ by some open set in the pro-\pv G topology of~$A^+$.
Thus, in terms of the pro-\pv G topology, the above question
translates into testing membership of $w$ in the closure of~$L$ in the
pro-\pv G topology of~$A^+$. More generally, we have the following
result, where we denote by \clos VL%
\index{$\mathrm{cl}_{\mathsf{V}(L)}$} %
the closure of $L$ in the pro-\pv V topology of~$A^+$.

\begin{proposition}\label{p:ALCO-separation}
  Let $A$ be a finite alphabet and let \pv V be a pseudovariety of
  semigroups. For a regular language $L\subseteq A^+$, a word $w\in
  A^+$ can be separated from~$L$ by a \pv V-recognizable language if
  and only if $w\notin\clos VL$.
\end{proposition}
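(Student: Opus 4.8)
The plan is to prove both directions by unwinding the definitions of separation and of the pro-$\pv V$ closure, and then to invoke the characterization of $\pv V$-open sets already established in Proposition~\ref{p:ALCO-pro-Q-uniformity}\eqref{item:ALCO-pro-Q-uniformity-3}. Throughout, $A^+$ carries its pro-$\pv V$ topology (here $\pv Q=\pv V$ is a pseudovariety of semigroups, so all the machinery of Subsection~\ref{sec:ALCO-prof-unif-metr} applies with $S=A^+$ as a discrete algebra).

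\medskip

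First I would record what ``separated by a $\pv V$-recognizable language'' means: there is a $\pv V$-recognizable $K\subseteq A^+$ with $w\in K$ and $K\cap L=\emptyset$. By Proposition~\ref{p:ALCO-pro-Q-uniformity}\eqref{item:ALCO-pro-Q-uniformity-3}, every $\pv V$-recognizable subset of $A^+$ is $\pv V$-clopen, and the $\pv V$-recognizable sets form a basis of the pro-$\pv V$ topology. For the forward direction, suppose $w$ can be separated from $L$ by such a $K$. Then $K$ is an open neighborhood of $w$ disjoint from $L$; hence $w$ does not lie in the closure of $L$, i.e.\ $w\notin\clos VL$. This direction is essentially immediate once one notes that $\pv V$-recognizable sets are open.

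\medskip

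For the converse, suppose $w\notin\clos VL$. Then the complement $A^+\setminus\clos VL$ is a $\pv V$-open set containing $w$ and disjoint from $L$ (it is disjoint from $L$ because $L\subseteq\clos VL$). Since the $\pv V$-recognizable subsets form a basis of the pro-$\pv V$ topology, there is a $\pv V$-recognizable set $K$ with $w\in K\subseteq A^+\setminus\clos VL$. This $K$ separates $w$ from $L$, as required. The two implications together give the equivalence.

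\medskip

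The only point requiring a little care---and the closest thing to an obstacle---is the passage from ``open neighborhood disjoint from $L$'' to ``\emph{recognizable} neighborhood disjoint from $L$'' in the converse. This is exactly where the basis property in Proposition~\ref{p:ALCO-pro-Q-uniformity}\eqref{item:ALCO-pro-Q-uniformity-3} is essential: a general $\pv V$-open set need not itself be recognizable, but it is a union of recognizable sets, so a single recognizing member of the basis suffices to witness separation. One should also note the roles of the finiteness and regularity hypotheses: $A$ is finite so that the pro-$\pv V$ topology is metrizable (Proposition~\ref{p:ALCO-uniformity-vs-ultrametric}) and the closure $\clos VL$ is well behaved, and $L$ being regular is what makes the separation question an effective one, though for the equivalence itself regularity of $L$ is not logically needed. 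Everything else is a direct translation between the topological and the recognition-theoretic vocabulary.
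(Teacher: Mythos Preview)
Your argument is correct and matches the paper's approach: the paper does not give a separate formal proof but explains in the paragraph preceding the proposition (for the case $\pv V=\pv G$) that, via Proposition~\ref{p:ALCO-pro-Q-uniformity}\eqref{item:ALCO-pro-Q-uniformity-3}, separation by a \pv V-recognizable language is the same as separation by a \pv V-open set, which in turn is equivalent to $w\notin\clos VL$. Your remark that regularity of $L$ is not logically needed for the equivalence is apt; note also that the appeal to metrizability is unnecessary, since the closure and the basis property make sense in any topology.
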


Thus, for a pseudovariety \pv V to have computable $\tau$-closures for
the implicit signature reduced to multiplication (see
Subsection~\ref{sec:ALCO-decid-tamen}) is a property that has
immediate automata-theoretic relevance.

The special case of separation by group languages has particular
historical importance. It was first considered by Pin and Reutenauer
\cite{Pin&Reutenauer:1991}, who proposed the following recursive
procedure to compute \clos GL, where $FG(A)$ denotes the group freely
generated by~$A$.

\begin{theorem}[{\cite[Theorem~2.4]{Pin&Reutenauer:1991}}]
  \label{t:ALCO-Pin-Reutenauer}
  Given a regular expression for a language~$L\subseteq A^+$, replace
  the operation $K\mapsto K^+$ by that of taking the subgroup of
  $FG(A)$ generated by the argument $K$. The resulting expression
  describes a subset of~$FG(A)$ and \clos GL is its intersection
  with~$A^+$.
\end{theorem}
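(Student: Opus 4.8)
The plan is to carry the whole computation into the free group $FG(A)$, in which $A^+$ embeds, and to prove the two inclusions of the asserted equality separately. First I would note that the inclusion $A^+\hookrightarrow FG(A)$ identifies the pro-$\pv G$ topology of $A^+$ with the topology induced from the pro-$\pv G$ topology of $FG(A)$: every homomorphism from $A^+$ into a finite group extends uniquely to $FG(A)$, and conversely restricts, so the two families of $\pv G$-recognizable sets agree on $A^+$. Writing $\mathrm{cl}(X)$ for the closure of $X\subseteq FG(A)$ in the pro-$\pv G$ topology of $FG(A)$, the subspace relation then gives $\clos GL=\mathrm{cl}(L)\cap A^+$. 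Denoting by $\widehat L\subseteq FG(A)$ the set described by the modified expression (singletons, unions and products kept, each $K\mapsto K^+$ replaced by $K\mapsto\langle K\rangle$), it remains to prove $\mathrm{cl}(L)\cap A^+=\widehat L\cap A^+$.

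For the inclusion $\widehat L\cap A^+\subseteq\clos GL$ I would show by induction on the structure of the expression that $\widehat L\subseteq\mathrm{cl}(L)$. The singleton case is trivial, and the union case is immediate from $\mathrm{cl}(X\cup Y)=\mathrm{cl}(X)\cup\mathrm{cl}(Y)$. For a product, since $FG(A)$ is a topological group for its pro-$\pv G$ topology (Proposition~\ref{p:ALCO-pro-Q-uniformity-vs-algebra}), continuity of multiplication yields $\mathrm{cl}(X)\mathrm{cl}(Y)\subseteq\mathrm{cl}(XY)$, preserving the inductive inclusion. For a plus node I would use that a finite subsemigroup of a group is a subgroup: for every homomorphism $\varphi$ from $FG(A)$ onto a finite group, $\varphi(K^+)=\varphi(\langle K\rangle)$, so $K^+$ and $\langle K\rangle$ have the same pro-$\pv G$ closure, which is a closed subgroup; as it already contains $\mathrm{cl}(K)\supseteq\widehat K$ by induction, it contains $\langle\widehat K\rangle$, giving $\langle\widehat K\rangle\subseteq\mathrm{cl}(K^+)$. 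Intersecting $\widehat L\subseteq\mathrm{cl}(L)$ with $A^+$ yields this inclusion; it uses no deep input.

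The reverse inclusion $\clos GL\subseteq\widehat L$ reduces to showing that $\widehat L$ is closed in the pro-$\pv G$ topology of $FG(A)$: since $L\subseteq\widehat L$ by the same induction (as $K^+\subseteq\langle K\rangle$), closedness gives $\mathrm{cl}(L)\subseteq\widehat L$ and hence $\clos GL=\mathrm{cl}(L)\cap A^+\subseteq\widehat L$. To prove closedness I would first establish, by a simultaneous induction, a normal form: distributing products over unions, $\widehat L$ is a finite union of sets $w_0H_1w_1\cdots H_kw_k$ with each $H_i$ a \emph{finitely generated} subgroup and each $w_j\in FG(A)$; the finite generation of each $H_i=\langle\widehat K_i\rangle$ follows because $\langle w_0H_1\cdots H_kw_k\rangle=\langle w_0,\dots,w_k,H_1,\dots,H_k\rangle$ is generated by finitely many elements and finitely many finitely generated subgroups. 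Conjugating, each summand equals a product of finitely generated subgroups translated by a fixed element. The \emph{main obstacle}, and the genuinely deep ingredient, is then the Ribes--Zalesskii theorem: a product of finitely many finitely generated subgroups of a free group is closed in the pro-$\pv G$ topology (the case $k=1$ being Marshall Hall's theorem). Granting this, each summand is a homeomorphic image of a closed set, hence closed, and a finite union of closed sets is closed; therefore $\widehat L$ is closed. I expect essentially all the difficulty to be concentrated in invoking Ribes--Zalesskii, the remainder being the bookkeeping of the two structural inductions.
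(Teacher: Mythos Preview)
Your proposal is correct and matches the approach the paper describes. The paper does not give its own proof of this theorem; it is stated as a cited result, and the surrounding text merely records that Pin and Reutenauer reduced its correctness to the conjecture that a product of finitely many finitely generated subgroups of a free group is closed in the profinite topology, subsequently proved by Ribes and Zalesski\u\i. Your argument is precisely a fleshed-out version of that reduction: the easy inclusion via continuity of the group operations, the normal form as a finite union of translates of products $H_1\cdots H_k$ of finitely generated subgroups, and the appeal to the Ribes--Zalesski\u\i\ theorem for closedness. There is nothing to correct; your identification of where the depth lies is exactly what the paper emphasizes.
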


The correctness of the algorithm described in
Theorem~\ref{t:ALCO-Pin-Reutenauer} was reduced to the proposed
conjecture that the product of finitely many finitely generated
subgroups of a free group is closed in its profinite topology, thus
generalizing M. Hall's result that finitely generated subgroups of the
free group are closed in the profinite topology \cite{Hall:1950}. This
conjecture was established by Ribes and
Zalesski\u\i~\cite{Ribes&Zalesskii:1993a} using profinite group
theory. The original motivation for computing \clos GL comes from the
fact that Pin and Reutenauer also showed that the correctness of their
procedure implies that the ``type II conjecture''%
\index{conjecture!type II --}%
\index{type II conjecture} %
holds. This other conjecture gives a constructive description of
the \emph{group kernel}%
\index{group kernel} %
\index{monoid!group kernel of --} %
$K_\pv G(M)$ of a finite monoid $M$. More generally, for a
pseudovariety \pv H of groups, the \emph{\pv H-kernel} $K_\pv H(M)$%
\index{$K_\pv H(M)$} %
consists of all $m\in M$ such that, for every relational morphism
$\mu$ from $M$ to a group in~\pv H, $(m,1)$ belongs to~$\mu$. The type
II conjecture states that $K_\pv G(M)$ is the smallest submonoid
of~$M$ that contains the idempotents and is closed under the
operation that sends $m$ to $amb$ if $aba=a$ or $bab=b$ (\emph{weak
  conjugation}%
\index{weak conjugation}%
\index{conjugation!weak --}). %
An independent proof of the type II conjecture was
obtained by Ash~\cite{Ash:1991} and is discussed in
Subsection~\ref{sec:ALCO-tameness-pvs}.

The pro-\pv V closure of regular languages in $A^+$ has also been
considered for other pseudovarieties \pv V. For a pseudovariety \pv H
of groups, the motivation comes from the membership problem in the
pseudovariety $\pv W\malcev\pv H$ for a pseudovariety of monoids \pv
W. Indeed, it is easy to show that a finite monoid $M$ belongs to this
Mal'cev product if and only if $K_\pv H(M)$ belongs to~\pv W. On the
other hand, Pin~\cite{Pin:1991} observed that, if $\varphi:A^*\to M$
is an onto homomorphism and $m\in M$, then $m\in K_\pv H(M)$ if and
only if the empty word $1$ belongs to $\clos
H{\varphi^{-1}(m)}$. Thus, we have the following result.

\begin{proposition}\label{p:ALCO-deciding-W_malcev_H}
  Let \pv W be a decidable pseudovariety of monoids and \pv H a
  pseudovariety of groups such that one can decide whether, given a
  regular language $L\subseteq A^*$, the empty word belongs to~\clos
  HL. Then $\pv W\malcev\pv H$ is decidable.
\end{proposition}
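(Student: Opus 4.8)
The plan is to assemble the two characterizations recalled immediately before the statement into an explicit decision procedure. Given a finite monoid $M$, deciding whether $M\in\pv W\malcev\pv H$ is, by the characterization noted above, equivalent to deciding whether the \pv H-kernel $K_\pv H(M)$ belongs to~\pv W. Since $K_\pv H(M)$ is a submonoid of~$M$ and hence a finite monoid, and since \pv W is decidable, it suffices to compute the subset $K_\pv H(M)$ of~$M$ effectively.

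To carry this out, I would fix a finite alphabet $A$ together with an onto homomorphism $\varphi:A^*\to M$; such a presentation always exists, for instance taking $A$ to be a generating subset of~$M$. For each $m\in M$, the preimage $\varphi^{-1}(m)$ is a regular language over~$A$, recognized by $M$ itself, and an automaton for it is effectively constructible from the multiplication table of~$M$. By Pin's observation recalled above, $m\in K_\pv H(M)$ if and only if the empty word $1$ lies in the pro-\pv H closure $\clos H{\varphi^{-1}(m)}$. The hypothesis on~\pv H supplies an algorithm deciding precisely this membership for any regular $L\subseteq A^*$; applying it to each of the finitely many languages $\varphi^{-1}(m)$ with $m\in M$ produces the complete list of elements of~$K_\pv H(M)$.

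With $K_\pv H(M)$ available as an explicit finite submonoid, I would finally invoke the decidability of~\pv W to test whether $K_\pv H(M)\in\pv W$ and return this as the answer for $M\in\pv W\malcev\pv H$. No single step is hard, each being a direct application of a result already in place; the point deserving attention is merely that the whole procedure is genuinely effective. One must verify that the languages $\varphi^{-1}(m)$ are handed to the closure-membership algorithm in the regular form it accepts---immediate from the finiteness of~$M$---and that the listed set $K_\pv H(M)$ is closed under multiplication, so that the query ``$K_\pv H(M)\in\pv W$'' is well posed. The latter follows at once from the definition of the kernel as an intersection of preimages under relational morphisms, each of which is a submonoid of a product $M\times G$.
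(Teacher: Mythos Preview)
Your proposal is correct and follows exactly the approach the paper intends: the proposition is stated immediately after the two ingredients you assemble (the characterization $M\in\pv W\malcev\pv H\Leftrightarrow K_\pv H(M)\in\pv W$ and Pin's observation that $m\in K_\pv H(M)\Leftrightarrow 1\in\clos H{\varphi^{-1}(m)}$), and the paper offers no further proof beyond the word ``Thus''. The effectiveness checks you add are appropriate and make explicit what the paper leaves to the reader.
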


The problem of computing the pro-\pv H closure in~$A^*$ of a regular
language $L\subseteq A^*$ has been considered for other
pseudovarieties of groups such as \pv{Ab} (Abelian groups)
\cite{Delgado:2001}, $\pv G_p$%
\index{$\mathsf{G}_p$} %
($p$-groups), $\pv G_{\mathrm{nil}}$%
\index{$\mathsf{G}_{\mathrm{nil}}$} %
(nilpotent groups), and $\pv G_{\mathrm{sol}}$%
\index{$\mathsf{G}_{\mathrm{sol}}$} %
(solvable groups) \cite{Ribes&Zalesskii:1993b,Margolis&Sapir&Weil:1999}.

Suppose that \pv H is a pseudovariety of groups such that the free
group $FG(A)$ is residually in~\pv H. Then we have natural embeddings
$A^*\hookrightarrow FG(A)\hookrightarrow \Om AH$. The pro-\pv H
topology of a subalgebra of~\Om AH is its subspace topology by
Propositions~\ref{p:ALCO-pro-Q-completion-is-pro-Q}
and~\ref{p:ALCO-free-pro-Q-through-completion}. Thus, an equivalent
problem to computing the \pv H-kernel of a finite monoid is to decide
whether $1$~belongs to the closure in~$FG(A)$ of a regular language
$L\subseteq A^*$. In case \pv H is closed under extensions (or,
equivalently, under semidirect product), Ribes and
Zalesski\u\i~\cite[Theorem~5.1]{Ribes&Zalesskii:1993b} have shown
that, for the pro-\pv H topology, the product of finitely many
finitely generated closed subgroups of~$FG(A)$ is also closed. Using
the Pin-Reutenauer techniques, they reduced the computation of the
pro-\pv H closure of a regular language $L\subseteq A^*$ to the
computation of the pro-\pv H closure in~$FG(A)$ of a given finitely
generated subgroup. That these results also apply
to $\pv G_{\mathrm{nil}}$ has been recently
shown in~\cite{Almeida&Shahzamanian&Steinberg:2017}.
Algorithms for the computation of
the pro-$\pv G_p$ and pro-$\pv G_{\mathrm{nil}}$ closures of finitely
generated subgroups of a free group can be found in
\cite{Ribes&Zalesskii:1993b,Margolis&Sapir&Weil:1999}. The case of
$\pv G_{\mathrm{sol}}$ remains open.

For an element $s$ of a profinite semigroup $S$, $s^\omega$%
\index{$\_^\omega$} %
denotes the unique idempotent in the closed subsemigroup $T$ generated
by~$s$, and $s^{\omega-1}$%
\index{$\_^{\omega-1}$} %
the inverse of $s^{\omega+1}=ss^\omega$ in the maximal subgroup
of~$T$. Consider the implicit signature $\kappa$%
\index{$\kappa$}%
\index{signature!$\kappa$}%
\index{implicit signature!$\kappa$} %
consisting of multiplication together with the unary operation
$x\mapsto x^{\omega-1}$. The free group $FG(A)$ may be then identified
with the free algebra \omc AG. This suggests generalizations of the
Pin-Reutenauer procedure for computing the pro-\pv G closure of a
regular language $L\subseteq A^*$ to other pseudovarieties. The analog
of the procedure is shown in~\cite{Almeida&Costa&Zeitoun:2014} to hold
for the pseudovariety \pv A (and the signature~$\kappa$).

Another important application of the separation problem has been given
by Place and Zeitoun \cite{Place&Zeitoun:2014b} in the study of the
decidability problem for the Straubing-Th\'erien hierarchy of
star-free languages.

\subsection{Tameness}
\label{sec:ALCO-tameness-pvs}

There is a natural way of associating a system of semigroup equations
to a finite digraph that is relevant for the computation of
semidirect products of pseudovarieties of semigroups
\cite{Almeida:1999b}. Namely, the variables of the system are the
vertices and the arrows, and each arrow $u\xrightarrow{e}v$ gives rise
to an equation $ue=v$ which relates the act of following the arrow
with multiplication as in a Cayley graph. The term \emph{tameness} was
introduced in~\cite{Almeida&Steinberg:2000b} to refer to tameness with
respect to such systems of equations in the sense of
Subsection~\ref{sec:ALCO-decid-tamen}. To avoid confusion, we prefer
to call it here \emph{graph tameness}%
\index{tame!graph --}%
\index{pseudovariety!graph tame --}. %
We adopt a similar convention for other properties parametrized by
systems of equations, such as hyperdecidability and reducibility.

For example, for the one-vertex one-loop digraph, the corresponding
equation is $xy=x$. It is easy to verify that, with constraints given
by a function $\xi$ into a finite monoid $M$, this equation is \pv
G-inevitable if and only if $\xi(y)\in K_\pv G(M)$. For the two-vertex
digraph with $n$ arrows from one vertex to the other one, the
associated system of equations has the form $xy_i=z$ ($i=1,\ldots,n$).
If $\xi$ is a constraining function into a finite monoid $M$ such that
$\xi(x)=1$, then the system is \pv G-inevitable if and only if, for
every relational morphism $\mu$ from $M$ into an arbitrary $G\in \pv
G$, there is some $g\in G$ such that
$\xi(\{y_1,\ldots,y_n,z\})\times\{g\}\subseteq\mu$. Replacing \pv G by
an arbitrary pseudovariety \pv V of monoids, the latter condition is
expressed by saying that the subset $\xi(\{y_1,\ldots,y_n,z\})$ of~$M$
is \emph{\pv V-pointlike}%
\index{subset!pointlike --}. %

The first and best known example of a graph tame pseudovariety is that
of the pseudovariety \pv G. This result has been discovered in
different disguises, first by Ash~\cite{Ash:1991}, as a means of
establishing the type II%
\index{conjecture!type II --}%
\index{type II conjecture} %
(see Subsection~\ref{sec:ALCO-prof-closures}
and~\cite{Karnofsky&Rhodes:1982}) and \emph{pointlike}%
\index{conjecture!pointlike --}%
\index{pointlike conjecture} %
\cite{Henckell&Rhodes:1991} conjectures. In Ash's formulation, the
arrows of finite digraphs are labeled with elements of a finite monoid
$M$ and the result is said to be \emph{inevitable} if, for every
relational morphism $\mu$ from $M$ into an arbitrary finite group $G$,
each label can be replaced by a $\mu$-related label in~$G$ such that,
for every (not necessarily directed) cycle, the product of the labels
of the arrows, or their inverses for backward arrows, is equal to~1
in~$G$. In the notation of Subsection~\ref{sec:ALCO-decid-tamen} and
also taking into account \cite[Lemma~4.8]{Almeida&Steinberg:2000a},
Ash's theorem states that such a labeled digraph is inevitable if and
only if the preceding property holds for the relational morphism
$\mu_{\pv G,A}^\kappa$ associated with any choice of generating set
$A$ for~$M$. It then follows easily that Ash's theorem translates to
the statement that \pv G is graph $\kappa$-tame.

Fix a finite relational language. A class \Cl R of relational
structures is said to satisfy the \emph{finite extension property for
  partial automorphisms} (FEPPA) if, for every finite structure $R$
in~\Cl R and every set $P$ of isomorphisms between substructures
of~$R$, if there exists in~\Cl R an extension $S$ of~$R$ in which all
$f\in P$ extend to automorphisms of~$S$, then there is such an
extension $S$ which is finite. A \emph{homomorphism} of relational
structures is a function that preserves the relations in the forward
direction. The \emph{exclusion} of a class \Cl R of relational
structures is the class of relational structures $S$ such that there
is no homomorphism $R\to S$ with $R\in\Cl R$. Herwig and Lascar
\cite{Herwig&Lascar:1997} showed that, for a finite class \Cl R of
finite relational structures, its exclusion class satisfies FEPPA. They also
gave an equivalent formulation of this result in terms of a property
of free groups, which Delgado and the first author
\cite{Almeida&Delgado:1997,Almeida&Delgado:1999} proved to be
equivalent to the graph $\kappa$-tameness of~\pv G.

On the other hand, it follows from results of Coulbois and Khélif
\cite{Coulbois&Khelif:1999} that the pseudovariety \pv G is not
completely $\kappa$-tame. It would be of interest to find a signature
$\tau$ such that \pv G is completely $\tau$-tame, if any such
signature exists.

\begin{theorem}
  \label{t:ALCO-G-tame}
  The pseudovariety \pv G is graph $\kappa$-tame but not completely
  $\kappa$-tame.
\end{theorem}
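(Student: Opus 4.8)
The plan is to treat the two assertions separately, drawing the positive part from Ash's theorem~\cite{Ash:1991} and the negative part from the Coulbois--Kh\'elif results~\cite{Coulbois&Khelif:1999}, both recalled in the discussion above.

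For graph $\kappa$-tameness, Definition~\ref{d:ALCO-tameness} requires two things: that \pv G be $\kappa$-reducible with respect to the class of graph systems, and that the word problem for each $\omc AG$ be decidable. The reducibility is exactly the content of Ash's theorem in its reformulated shape: a labeled finite digraph is inevitable if and only if the corresponding inevitability property already holds for the relational morphism $\mu_{\pv G,A}^\kappa$ attached to a generating set $A$ of the constraining monoid~$M$. Passing through the dictionary between inevitability and existence of solutions (Theorem~\ref{t:ALCO-systems-compactness-inevitability}, together with the identification of~$\omc AG$ with the free group $FG(A)$), this says precisely that whenever a graph system with constraints into a finite monoid admits a solution in~\Om AG, it already admits one taking values in~$\omc AG$; this is $\kappa$-reducibility for graph systems. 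For the word problem, I would use that $\omc AG=FG(A)$ is free: reduced words furnish a normal form, so any two $\kappa$-terms over~$A$ can be effectively reduced and compared. Combining the two conditions yields that \pv G is graph $\kappa$-tame.

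For the failure of complete $\kappa$-tameness, I would exhibit, following Coulbois and Kh\'elif, a single finite constrained system of $\kappa$-equations that is \pv G-inevitable---equivalently, has a solution in~\Om AG---yet admits no solution taking values in~$\omc AG=FG(A)$. Since complete $\kappa$-tameness demands $\kappa$-reducibility for the class of \emph{all} finite systems of $\kappa$-terms, such a witness immediately defeats reducibility for that class, whence \pv G cannot be completely $\kappa$-tame. Concretely, the cited results produce an element of~\Om AG satisfying the prescribed $\kappa$-equations that is not the image of any $\kappa$-word, so the system is solvable over the profinite completion but not over the free group.

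The reformulations and the free-group word problem are routine; the genuine difficulty lies entirely in the two cited deep inputs. For the positive direction the weight is carried by Ash's theorem, whose proof realizes an inevitable labeling inside a single finite group through an iterated construction of relational morphisms, and this is the hard core. For the negative direction the obstacle is the Coulbois--Kh\'elif construction of a $\kappa$-equation system whose solutions in~\Om AG all escape the subalgebra~$\omc AG$; proving that no $\kappa$-word solves the system requires a fine analysis of the structure of the free profinite group, and that is where the real difficulty resides.
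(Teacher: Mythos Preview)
Your proposal is correct and matches the paper's treatment: the paper does not give a formal proof but rather a discussion that attributes graph $\kappa$-tameness to Ash's theorem (with the word problem for $\omc AG=FG(A)$ being elementary) and the failure of complete $\kappa$-tameness to the results of Coulbois and Kh\'elif, exactly as you have organized it. The only minor refinement is that the paper also invokes \cite[Lemma~4.8]{Almeida&Steinberg:2000a} when translating Ash's inevitability formulation into the $\kappa$-reducibility language, but this is a routine dictionary step already implicit in your appeal to Theorem~\ref{t:ALCO-systems-compactness-inevitability}.
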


Tameness has also been investigated for other pseudovarieties of
groups. The pseudovariety \pv{Ab} is completely $\kappa$-tame
\cite{Almeida&Delgado:2001}. On the other hand, a pseudovariety of
Abelian groups is completely hyperdecidable if and only if it is
decidable while it is completely $\kappa$-tame if and only if it is
locally finite or \pv{Ab} \cite{Delgado&Masuda&Steinberg:2007}. For
the pseudovariety $\pv G_p$, the situation is more complicated.
Steinberg \cite[Theorem~11.12]{Steinberg:1998a} showed that, for every
nontrivial extension-closed pseudovariety of groups \pv H such that
the pro-\pv H closure of a finitely generated subgroup of a free group
is again finitely generated, \pv H is graph weakly $\kappa$-reducible.
On the other hand, a graph $\kappa$-reducible pseudovariety must admit
a basis of pseudoidentities consisting of $\kappa$-identities
\cite[Proposition~4.2]{Almeida&Steinberg:2000a} which, since free
groups are residually in~$\pv G_p$, entails that $\pv G_p$ is not
graph $\kappa$-tame. Using symbolic dynamics techniques to generate a
suitable infinite implicit signature, the first author has established
the following result \cite{Almeida:1999c}.

\begin{theorem}
  \label{t:ALCO-Gp-tame}
  There is a signature $\tau$ such that $\pv G_p$ is graph
  $\tau$-tame.
\end{theorem}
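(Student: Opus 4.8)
The plan is to obtain graph $\tau$-tameness of $\pv G_p$ by verifying its two defining ingredients from Definition~\ref{d:ALCO-tameness}: graph $\tau$-reducibility of $\pv G_p$ and decidability of the word problem for each $\Omega_A^\tau\pv G_p$. The natural starting point is that $\pv G_p$ is already graph \emph{weakly} $\kappa$-reducible. Indeed, $\pv G_p$ is nontrivial and extension-closed, and the pro-$\pv G_p$ closure of a finitely generated subgroup of a free group is again finitely generated (indeed computable, cf.~\cite{Ribes&Zalesskii:1993b,Margolis&Sapir&Weil:1999}), so Steinberg's theorem \cite[Theorem~11.12]{Steinberg:1998a} applies. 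What obstructs using $\kappa$ itself, as recalled just before the statement, is full reducibility: a graph $\kappa$-reducible pseudovariety would admit a basis of $\kappa$-identities, which is incompatible with free groups being residually in $\pv G_p$. The idea is therefore to pass to a richer, still computable, signature $\tau\supseteq\kappa$ for which the weak form of reducibility can be upgraded to the full one while the word problem stays decidable.

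The construction of such a $\tau$ is exactly where symbolic dynamics enters. I would let $\tau$ be a recursively enumerable family of implicit operations realized as limits of explicit words produced by suitable symbolic dynamical systems (for instance words read off from iterated substitutions or from factors of uniformly recurrent sequences); each such operation is computable because it is encoded by the finite data defining its underlying dynamical system. This $\tau$ should be arranged to have two features. First, $\tau\supseteq\kappa$: since $\Omega_A^\kappa\pv G_p\subseteq\Omega_A^\tau\pv G_p$ forces $\bar\mu_{\pv G_p,A}^\kappa\subseteq\bar\mu_{\pv G_p,A}^\tau$, and inevitability with respect to a larger relational morphism is a weaker requirement, graph weak $\kappa$-reducibility immediately yields graph weak $\tau$-reducibility. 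Second, and more delicately, $\tau$ must be closed enough to make $\pv G_p$ \emph{$\tau$-full}, that is, $\mu_{\pv G_p,A}^\tau=\bar\mu_{\pv G_p,A}^\tau$ for every $A$-generated $S$; the symbolic-dynamics operations are tailored so that the limit points occurring in the closure $\bar\mu_{\pv G_p,A}^\tau$ are themselves values of $\tau$-terms, which is what forces fullness.

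Granting weak $\tau$-reducibility and $\tau$-fullness, the implication recorded in the text --- a weakly $\tau$-reducible $\tau$-full pseudovariety is $\tau$-reducible --- delivers graph $\tau$-reducibility of $\pv G_p$. It then remains to decide, for two $\tau$-terms over $A$, whether they represent the same element of $\Omega_A^\tau\pv G_p$. This is the step I expect to be the main obstacle, and the reason a \emph{bespoke} signature is needed rather than a careless enlargement of $\kappa$: adding operations makes reducibility and fullness easier but simultaneously threatens decidability of the word problem. The tension is resolved by building $\tau$ so that the dynamical provenance of its operations supplies a normal form --- concretely, one reduces equality of $\tau$-terms in $\Omega_A^\tau\pv G_p$, using that the free group is residually in $\pv G_p$, to a decidable equivalence on the finite presentations of the associated dynamical systems. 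Combining this decidability with graph $\tau$-reducibility yields graph $\tau$-tameness of $\pv G_p$ by Definition~\ref{d:ALCO-tameness}, which is the assertion.
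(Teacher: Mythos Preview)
The paper does not actually prove this theorem; it attributes the result to \cite{Almeida:1999c} and summarizes the approach in a single phrase: ``using symbolic dynamics techniques to generate a suitable infinite implicit signature.'' So there is no in-paper proof to compare your proposal against in detail.

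Your overall strategy is consistent with that one-line description and with the surrounding discussion: start from Steinberg's graph weak $\kappa$-reducibility of~$\pv G_p$, enlarge $\kappa$ to a computable signature $\tau$ built from symbolic-dynamical data, and then verify the two ingredients of Definition~\ref{d:ALCO-tameness}. The monotonicity step you use (weak $\kappa$-reducibility $\Rightarrow$ weak $\tau$-reducibility for $\tau\supseteq\kappa$) is correct, since $\bar\mu_{\pv G_p,A}^\kappa\subseteq\bar\mu_{\pv G_p,A}^\tau$ and any witnessing $\delta$ into $\omc A{G_p}$ serves equally well into $\omt A{G_p}$.

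That said, what you have written is a strategy rather than a proof. The two load-bearing claims---that some specific $\tau$ makes $\pv G_p$ $\tau$-full, and that the word problem for each $\omt A{G_p}$ is decidable---are stated as design goals (``should be arranged'', ``are tailored so that'', ``the tension is resolved'') rather than established. In particular, $\tau$-fullness amounts to $\mu_{\pv G_p,A}^\tau$ being already closed in $S\times\omt A{G_p}$ for the pro-$\pv G_p$ topology on the second factor; nothing in your sketch identifies which symbolic-dynamical operations achieve this or why. Likewise, residual $p$-finiteness of free groups handles the word problem for $\kappa$-terms (since $\omc A{G_p}\cong FG(A)$), but says nothing about the additional operations in~$\tau$; your appeal to ``a decidable equivalence on the finite presentations of the associated dynamical systems'' is not an argument. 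These are precisely the points where the substantive work of \cite{Almeida:1999c} lies, and to turn your outline into a proof you must either carry out that construction or invoke it explicitly.
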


Building on the approach of \cite{Almeida:1999c}, Alibabaei has
constructed for each decidable pseudovariety \pv H of Abelian groups
an implicit signature with respect to which \pv H is completely tame 
\cite{Alibabaei:2017b} and also an implicit signature with respect to
which $\pv G_{\mathrm{nil}}$ is graph tame \cite{Alibabaei:2017c}.

A semigroup is said to be \emph{completely regular}%
\index{semigroup!completely regular --} %
if every element lies in some subgroup. The pseudovariety \pv{CR}%
\index{$\mathsf{CR}$} %
consists of all completely regular finite monoids and \pv{OCR}%
\index{$\mathsf{OCR}$} %
is the subpseudovariety consisting of those in which the idempotents
constitute a submonoid. Both these pseudovarieties have been shown to
be graph $\kappa$-tame
\cite{Almeida&Trotter:2001,Almeida&Trotter:1999a},\footnote{The
  conjecture to which the graph tameness of \pv{CR} is reduced
  in~\cite{Almeida&Trotter:2001} has been observed by K. Auinger
  (private communication) to hold using the methods of
  \cite{Almeida&Delgado:1997,Almeida&Delgado:1999}. There is also
  another difficulty which comes from the fact that free profinite
  semigroupoids over profinite graphs are considered. As has been
  shown in~\cite{Almeida&ACosta:2007a}, there are some rather delicate
  aspects in the description of such structures when the graph has
  infinitely many vertices, namely the free subsemigroupoid generated
  by a dense subgraph of the profinite graph may not be dense, and one
  needs in general to transfinitely iterate algebraic and topological
  closures. However, one can check that for the free profinite
  semigroupoid in question, the iteration stops in one step, from
  which it follows that the required properties of the suitable free
  profinite semigroupoid are guaranteed \cite{Almeida&Costa:2015a}.}
results which depend heavily on Theorem~\ref{t:ALCO-G-tame} together
with structure theorems for the corresponding relatively free
profinite monoids.

Several aperiodic pseudovarieties have also been investigated. An
interesting example is that of the pseudovariety \pv J%
\index{$\mathsf{J}$} %
of all finite \Cl J-trivial semigroups, corresponding to the variety
of piecewise testable languages%
\index{language!piecewise testable --} %
(see~\cite{Eilenberg:1976}). The first author has
shown that, for a finite alphabet $A$, $\Om AJ=\omc AJ$ and also
solved the word problem for \omc AJ (see
\cite[Section~8.2]{Almeida:1994a}). Since it is an easy exercise to
deduce that \pv J is $\kappa$-full, it follows from
Proposition~\ref{p:ALCO-tameness_under_automatic_reducibility} that
\pv J is completely $\kappa$-tame, and therefore graph hyperdecidable.
The construction of a ``real algorithm'' to decide inevitability turns
out to be much more involved \cite{Almeida&Zeitoun:1998}.

For the pseudovariety \pv R, consisting of all finite \Cl R-trivial
semigroups, constructing a concrete algorithm to show that \pv R is
graph hyperdecidable is technically complicated, even when only
strongly connected digraphs are considered \cite{Almeida&Silva:1997b}.
Building on seminal ideas of Makanin \cite{Makanin:1977} and taking
into account the structure of free pro-\pv R semigroups
\cite{Almeida&Weil:1996b}, Costa, Zeitoun and the first
author~\cite{Almeida&Costa&Zeitoun:2005b} have established the
following result.

\begin{theorem}
  \label{t:ALCO-R-tame}
  The pseudovariety \pv R is completely $\kappa$-tame.
\end{theorem}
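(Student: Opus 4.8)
The plan is to verify the two ingredients demanded by Definition~\ref{d:ALCO-tameness}, taking the ambient pseudovariety to be \pv S, so that \Om AS is the free profinite semigroup and \Om AR the free pro-\pv R semigroup: first, that the word problem for each \omc AR is decidable, and second, that \pv R is $\kappa$-reducible with respect to the class of all finite systems of $\kappa$-equations with clopen constraints. The former is the routine part; the latter is where the real work lies.

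For the word problem I would lean on the structure of free pro-\pv R semigroups from~\cite{Almeida&Weil:1996b}, which provides a canonical form through the \emph{left basic factorization}: every $w\in\Om AR$ of nonempty content $c(w)$ factors uniquely as $w=u\,a\,v$, where $u$ is the longest prefix with $c(u)\subsetneq c(w)$ and $a$ is the letter immediately following it, so that $c(u)=c(w)\setminus\{a\}$. Since the content of $u$ is strictly smaller, iterating this factorization yields a hierarchical description of $w$ governed by content, on which one may induct on $|c(w)|$. Restricted to $\kappa$-words, this gives a terminating normalization comparing two $\kappa$-terms, and hence decidability of the word problem for \omc AR.

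For reducibility I would argue by induction on the content of the variables, using the left basic factorization as the driving mechanism. Given a finite system $\Sigma$ with clopen constraints admitting a solution $\gamma\colon X\to\Om AS$ in \Om AR, I would apply the factorization to both sides of each equation $u=v$ evaluated in \Om AR. Matching the two factorizations forces the leading content letters to coincide and splits each equation into a left factor of strictly smaller content, to which the induction hypothesis applies, together with a residual equation reconciling the trailing blocks. The crux is then to solve these residual boundary equations over $\kappa$-words: here I would adapt the elimination technique of Makanin~\cite{Makanin:1977} for word equations to the profinite setting, so that an arbitrary solution in \Om AR is progressively replaced by one whose values lie in \omc AS while the constraints are preserved.

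The main obstacle I anticipate is exactly this discretization. Turning a possibly ``wild'' solution in \Om AR into a $\kappa$-word solution requires simultaneously controlling the interaction of the $\omega$-power operations of $\kappa$ with the recursive content structure, and proving that the Makanin-style reduction terminates while respecting both the equations and the constraints. Exhibiting a suitable termination measure — combining the decreasing content with a bound on the combinatorial complexity of the boundary equations — and checking that the clopen, hence \pv S-recognizable, constraints remain stable under each reduction, is where the genuine technical difficulty resides, and is also what forces the signature to be $\kappa$ rather than bare multiplication.
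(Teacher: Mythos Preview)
The paper does not give its own proof of this theorem: it merely states the result and attributes it to~\cite{Almeida&Costa&Zeitoun:2005b}, indicating only that the argument builds ``on seminal ideas of Makanin~\cite{Makanin:1977} and tak[es] into account the structure of free pro-\pv R semigroups~\cite{Almeida&Weil:1996b}''. Your outline invokes precisely these two ingredients --- the left basic factorization from~\cite{Almeida&Weil:1996b} driving an induction on content, and a Makanin-style elimination to discretize solutions into $\kappa$-words --- so at the level of detail the paper provides, your approach coincides with the one it cites; you are also candid that the termination and constraint-preservation arguments are where the substantive work of~\cite{Almeida&Costa&Zeitoun:2005b} actually lies.
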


This result has been extended in \cite{Almeida&Borlido:2017} to
pseudovarieties of the form DRH, consisting of all finite semigroups
in which every regular \Cl R-class is a group from the pseudovariety
\pv H of groups.

Consider next the pseudovariety \pv{LSl}%
\index{$\mathsf{LSl}$} %
of all finite local semilattices, which corresponds to the variety of
locally testable languages (see~\cite{Eilenberg:1976}). The proof of
the following result involves very delicate combinatorics on words
\cite{Costa&Teixeira:2004,Costa&Nogueira:2009}.

\begin{theorem}
  \label{t:ALCO-LSl-tame}
  The pseudovariety \pv{LSl} is completely $\kappa$-tame.
\end{theorem}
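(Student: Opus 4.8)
By Definition~\ref{d:ALCO-tameness}, complete $\kappa$-tameness of \pv{LSl} amounts to two independent assertions: that the word problem for each \omc A{LSl} is decidable, and that \pv{LSl} is $\kappa$-reducible for every finite system of equations between $\kappa$-terms. The principle I would exploit throughout is that a finite local semilattice separates pseudowords only through \emph{local} data: for every $k$, the prefix and suffix of length $k$ and the set $F_k$ of length-$k$ factors. Consequently two elements of \Om AS have the same image in \Om A{LSl} precisely when all these invariants agree, and I would first make this separation statement precise and record the defining pseudoidentities of \pv{LSl} expressing local idempotency and commutativity.

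For the word problem, I would show that for each $\kappa$-word the three families of local invariants stabilise and are effectively computable, by reducing a $\kappa$-term to a normal form that collapses repeated and reordered factors inside every $\omega$-power using local idempotency and commutativity. I would then prove that two $\kappa$-terms denote the same element of \omc A{LSl} if and only if their normal forms — equivalently their finite prefix, suffix and factor data — coincide, which gives decidability. This is the more routine part, although the interaction of ordinary multiplication with the operation $x\mapsto x^{\omega-1}$ already demands careful bookkeeping.

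The heart of the argument is reducibility. After replacing the clopen constraints by a constraining homomorphism $\varphi\colon\Om AS\to N$ into a finite semigroup, as permitted following Theorem~\ref{t:ALCO-systems-compactness-solutions}, I am given a pseudoword solution $\gamma\colon X\to\Om AS$ whose substitution makes the two sides of each equation share the same local profile in \Om A{LSl}. The goal is to produce a $\kappa$-word solution $\gamma'$; crucially, $\gamma'$ need not have the same \pv{LSl}-images as $\gamma$, since $\pi_{\pv{LSl}}(\gamma(x))$ may itself fail to be a $\kappa$-word, so this is a genuine joint feasibility problem and not a per-variable substitution. The plan is to extract from $\gamma$ the combinatorial pattern it realises — which factors occur, and how the prefixes and suffixes of the two sides of each equation line up — and then to build values $\gamma'(x)\in\varphi^{-1}(\varphi\gamma(x))$ that are $\kappa$-words realising the same pattern. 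Here the operations $x\mapsto x^\omega$ and $x\mapsto x^{\omega-1}$ are used to generate the idempotent and stabilised blocks that simultaneously fix the required image in $N$, so that the constraints hold, and produce the stabilised factor sets demanded by the profile, so that the local data on the two sides of every equation again agree and each equation continues to hold in \Om A{LSl}.

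The main obstacle is carrying out this construction \emph{compatibly across the whole system}. Multiplying two $\kappa$-words adds to the union of their factor sets the new factors straddling the junction, and forming an $\omega$-power introduces an entire stabilised block of factors, with the prefixes and suffixes transforming in a similarly interlocking way. One must show that the finitely many local conditions imposed by all the equations and by $\varphi$ can always be met simultaneously by $\kappa$-words, which amounts to a Makanin-style analysis of the admissible factorisation patterns and is precisely the delicate combinatorics on words carried out in~\cite{Costa&Teixeira:2004,Costa&Nogueira:2009}. Once reducibility is secured in this way, combining it with the decidability of the word problem for \omc A{LSl} established above yields that \pv{LSl} is completely $\kappa$-tame.
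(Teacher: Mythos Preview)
Your proposal is correct and aligned with the paper's treatment: the paper itself gives no proof beyond attributing the result to the ``very delicate combinatorics on words'' of \cite{Costa&Teixeira:2004,Costa&Nogueira:2009}, and your sketch correctly isolates the two components (decidability of the $\kappa$-word problem via the prefix/suffix/factor-set invariants characterising \pv{LSl}, and $\kappa$-reducibility via a compatibility analysis of factorisation patterns) while deferring the hard combinatorics to those same references. Your outline is in fact considerably more detailed than what the paper provides.
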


J. Rhodes announced in a conference held in 1998 in Lincoln, Nebraska,
that \pv A is graph $\kappa$-tame. The only part of the program to
establish such a result that has been published is McCammond's
solution of the word problem for \omc AA \cite{McCammond:1999a}.
Another, earlier ingredient in Rhodes' ideas comes from Henckell's
computation of the \pv A-pointlike subsets of a given finite semigroup
\cite{Henckell:1988}. See \cite{Place&Zeitoun:2014a} for a different proof.
In~\cite{Henckell&Rhodes&Steinberg:2010} there is also an alternative
proof and the following generalization.

\begin{theorem}
  \label{t:ALCO-A-pointlikes}
  If $\pi$ is a recursive set of prime integers, then there is an
  algorithm to compute pointlike sets of finite semigroups with
  respect to the pseudovariety $\overline{\pv G_\pi}$, consisting of
  all finite semigroups whose subgroups are $\pi$-groups.
\end{theorem}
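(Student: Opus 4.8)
My plan is to reduce the computation of $\overline{\pv G_\pi}$-pointlike sets to a decidability question about a fixed class of systems of equations, and then to settle that question by combining Henckell's solution of the aperiodic case with effective computations in the pro-$\pi$ topology of free groups. As recalled in Subsection~\ref{sec:ALCO-tameness-pvs}, a subset $X$ of a finite semigroup $S$ is $\overline{\pv G_\pi}$-pointlike precisely when the two-vertex system $xy_i=z$ ($i=1,\dots,n$), with the evident constraints forcing $X$ to be the constrained image of $\{y_1,\dots,y_n,z\}$, is $\overline{\pv G_\pi}$-inevitable. Thus it suffices to exhibit an algorithm deciding inevitability of these particular constrained systems; equivalently, to prove that $\overline{\pv G_\pi}$ is hyperdecidable with respect to this class of pointlike systems.

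First I would invoke Theorem~\ref{t:ALCO-systems-compactness-inevitability} to replace the quantification over all relational morphisms into $\overline{\pv G_\pi}$ by inevitability with respect to the single canonical relational morphism $\mu_{\overline{\pv G_\pi},A}$ attached to a finite generating set $A$ of~$S$. By Theorem~\ref{t:ALCO-systems-compactness-solutions} this turns the problem into deciding, in the free pro-$\overline{\pv G_\pi}$ semigroup over~$A$, whether the constrained system admits a solution. The task then becomes to compute the least family $\mathrm{PL}\subseteq\Cl P(S)$ of pointlike subsets, which I would present as the least fixed point of a monotone operator on the finite lattice $\Cl P(\Cl P(S))$ generated by the singletons and closed under two operations: the subset product $X,Y\mapsto XY$, and a \emph{group saturation} operation reflecting the fact that the subgroups of members of $\overline{\pv G_\pi}$ are $\pi$-groups.

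The \emph{soundness} of this characterization --- that every set produced by the operator is genuinely pointlike --- is the routine half: relational morphisms into $\overline{\pv G_\pi}$ respect products, and the group-saturation step is justified because the image of a subgroup is a $\pi$-group whose behaviour is controlled, via the pro-$\pi$ topology of the relevant free group, by the Ribes--Zalesski\u\i{} closure theorem for extension-closed pseudovarieties of groups \cite{Ribes&Zalesskii:1993b} (which applies, since $\pi$-groups form an extension-closed class). The hard part will be \emph{completeness}: showing that a subset lying outside the computed family can in fact be separated by a relational morphism into some member of $\overline{\pv G_\pi}$. Guided by the decomposition $\overline{\pv G_\pi}=\pv A\malcev\pv G_\pi$, I would build the separating semigroup as an amalgam of two layers --- an aperiodic ``flow'' (or expansion) semigroup furnishing the $\pv A$-coordinates exactly as in Henckell's argument \cite{Henckell:1988}, together with $\pi$-group coordinates drawn from a suitable relatively free $\pi$-group. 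Controlling the interaction of the aperiodic skeleton with the $\pi$-group layer, and verifying that no closure beyond product and group saturation is needed, is the technical core and is precisely the content supplied by Henckell, Rhodes and Steinberg \cite{Henckell&Rhodes&Steinberg:2010}.

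Finally I would address effectivity, which is where the hypothesis that $\pi$ be recursive enters. For a fixed finite~$S$ only finitely many primes --- bounded in terms of $|S|$ --- are relevant to deciding pointlikeness, since only such primes can divide the orders of the subgroups occurring in a needed separating witness; recursiveness of $\pi$ then lets one decide which of these are admissible, and hence compute the finitely many pertinent $\pi$-group quotients and their pro-$\pi$ closures effectively. The aperiodic ingredient is already effective by Henckell's theorem, and since $\Cl P(\Cl P(S))$ is finite the fixed-point iteration defining $\mathrm{PL}$ terminates. Putting these together yields the desired algorithm; in the terminology of Subsection~\ref{sec:ALCO-decid-tamen} it shows that $\overline{\pv G_\pi}$ is hyperdecidable with respect to the pointlike systems, which is the assertion of the theorem.
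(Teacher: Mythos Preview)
The paper does not prove this theorem: it is quoted as a result of Henckell, Rhodes and Steinberg~\cite{Henckell&Rhodes&Steinberg:2010}, with no argument supplied beyond the remark that it generalizes Henckell's aperiodic case~\cite{Henckell:1988}. So there is no ``paper's own proof'' to compare against; both the paper and your proposal ultimately point to the same external source for the real work.

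That said, your sketch has some issues worth flagging. The overall shape---a least fixed point on $\Cl P(\Cl P(S))$ closed under products and a saturation operation, with completeness established by building a separating relational morphism via semigroup expansions---is indeed the architecture of~\cite{Henckell&Rhodes&Steinberg:2010}. But your description of the saturation step and its justification is off. The relevant closure operator in~\cite{Henckell&Rhodes&Steinberg:2010} is not ``group saturation'' justified by the Ribes--Zalesski\u\i\ theorem; rather, one closes under an operation of the form $X\mapsto X^{\omega_{\pi'}}$ (a suitable stable power capturing the $\pi'$-torsion), and the soundness of this step is elementary, not a consequence of profinite closure results for subgroups of free groups. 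Ribes--Zalesski\u\i\ concerns products of finitely generated subgroups in free groups and is the tool behind tameness of~$\pv G$; it plays no role in the Henckell--Rhodes--Steinberg argument. Likewise, the completeness proof in~\cite{Henckell&Rhodes&Steinberg:2010} does not proceed by invoking a Mal'cev decomposition $\overline{\pv G_\pi}=\pv A\malcev\pv G_\pi$ and gluing an aperiodic layer to a $\pi$-group layer; it is a direct construction via flow/expansion techniques extending Henckell's original. Since you explicitly defer ``the technical core'' to~\cite{Henckell&Rhodes&Steinberg:2010} anyway, these inaccuracies in the surrounding narrative do not invalidate the conclusion, but they would mislead a reader about what that paper actually does.
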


Once it was discovered that there was a gap in the proof of the
\emph{basis theorem}%
\index{basis theorem} %
(see the discussion in Subsection~\ref{sec:ALCO-decid-tamen}), which
invalidated the reduction of the decidability of the Krohn-Rhodes
complexity to proving that \pv A is tame announced
in~\cite{Almeida&Steinberg:2000a}, Rhodes withdrew several manuscripts
that he claimed would prove that \pv A is tame. The Krohn-Rhodes complexity%
\index{Krohn-Rhodes complexity} %
pseudovarieties are defined recursively by $\pv C_0=\pv A$ and $\pv
C_{n+1}=\pv C_n*\pv G*\pv A$ \cite{Krohn&Rhodes:1968}, which
determines a complete and strict hierarchy for the pseudovariety of
all finite semigroups. Here, $*$ denotes the \emph{semidirect
  product}%
\index{semidirect product}%
\index{semigroups!semidirect product of pseudovarieties of --} %
of pseudovarieties of semigroups as defined
in~\cite[Section~10.1]{Almeida:1994a}, which is an associative
operation.

It has also been investigated whether tameness is preserved under the
operators of join and semidirect product. Since tameness is apparently
much stronger than decidability, if tameness is preserved by
semidirect product then the decidability of the Krohn-Rhodes
complexity is indeed reduced to proving that \pv A is tame. But, so
far, only very special cases have been treated. An example is the
following result \cite{Almeida&Costa&Teixeira:2010}, which improves
on~\cite{Almeida&Silva:1997a}.

\begin{theorem}
  \label{t:ALCO-*_vs_tameness}
  Let \pv V be a graph $\kappa$-tame pseudovariety and let \pv W be an
  order computable pseudovariety. Then $\pv V*\pv W$ is graph
  $\kappa$-tame.
\end{theorem}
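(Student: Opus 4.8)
The plan is to reduce the graph $\kappa$-tameness of $\pv V*\pv W$ to that of $\pv V$ by exploiting the local finiteness of $\pv W$, in the spirit of the join case of Theorem~\ref{t:ALCO-tameness_vs_join_with_order-computable}. Since $\pv W$ is order computable it is locally finite, so for every finite alphabet $A$ the free pro-$\pv W$ semigroup $\Om AW$ is finite, coincides with $\omc AW$, and is effectively computable together with its multiplication. The first step is to invoke the wreath product description of relatively free profinite semigroups over a semidirect product: a finite $A$-generated semigroup belongs to $\pv V*\pv W$ precisely when it divides a wreath product $N\wr T$ with $T\in\pv W$ and $N\in\pv V$, and correspondingly $\overline\Omega_A(\pv V*\pv W)$ embeds into $\Om BV\wr\Om AW$, where the flow alphabet $B$ is built from $A$ and the finite monoid $(\Om AW)^I$ and is therefore itself finite and computable.

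Using this embedding, the second step transforms a graph system $\Sigma$ over $\pv V*\pv W$, arising from a finite digraph with constraints given by a map $\xi\colon X\to M$ into a finite $A$-generated algebra, into a finite family of graph systems over $\pv V$. Because $\Om AW$ is finite, I first enumerate the finitely many admissible labelings of the variables by their $\pv W$-coordinates; each such labeling fixes how the action of the $\pv W$-part permutes the coordinates of the wreath product. For a fixed labeling, the equations $ue=v$ read off in the first (that is, $\pv V$-)coordinate become a new system of \emph{graph} equations over the expanded alphabet $B$, whose underlying digraph is an effectively computable unwinding of the original one and whose constraints live in a finite algebra derived from $M$ and $\Om AW$. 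The key point is that the second coordinate contributes only the finite, already-resolved $\pv W$-data, so no equations outside the graph format are introduced.

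The third step applies the hypothesis: since $\pv V$ is graph $\kappa$-tame, each derived system over $\pv V$ is $\kappa$-reducible and the word problem for each $\omc BV$ is decidable. Reassembling, a $\kappa$-solution of a derived system in $\omc BV$, paired with the fixed finite $\pv W$-labeling, yields a solution of $\Sigma$ in $\overline\Omega_A(\pv V*\pv W)$ all of whose wreath product components are $\kappa$-terms; since $\Om AW=\omc AW$ the $\pv W$-coordinate is automatically a $\kappa$-term, so the assembled solution lies in $\Omega_A^\kappa(\pv V*\pv W)$, establishing graph $\kappa$-reducibility. The word problem for $\Omega_A^\kappa(\pv V*\pv W)$ is then decided through the same embedding, by testing equality of two $\kappa$-terms coordinatewise, using decidability of the word problem for $\omc BV$ together with finite computations in $\Om AW$.

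The hard part will be the compatibility of the signature $\kappa$ — in particular the operation $x\mapsto x^{\omega-1}$ — with the wreath product decomposition, needed in the reassembly step. One must verify that the $\omega$-power and $(\omega-1)$-power of an element of $\Om BV\wr\Om AW$ are expressible coordinatewise by $\kappa$-operations: the $\pv W$-coordinate stabilizes after finitely many steps because $\Om AW$ is finite, and on the resulting idempotent-indexed coordinates the $\pv V$-entries are combined by a product that is again a $\kappa$-term over $\omc BV$. Controlling this coordinatewise expression — and thereby guaranteeing that reducibility over $\pv V$ genuinely lifts to reducibility over $\pv V*\pv W$, rather than merely to solvability over the ambient signature — is where the main technical effort lies; it is precisely the step where the structural wreath product results and the finiteness of $\Om AW$ must work in tandem.
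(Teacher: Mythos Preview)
The paper does not prove this theorem; it only states it, attributing it to \cite{Almeida&Costa&Teixeira:2010} (as an improvement of \cite{Almeida&Silva:1997a}). There is thus no in-text argument to compare against.

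Your plan is in the right spirit and broadly matches the strategy of the cited source. Two points need tightening. First, the embedding ``$\overline\Omega_A(\pv V*\pv W)\hookrightarrow\Om BV\wr\Om AW$'' is not the actual technical device: the argument proceeds through free profinite \emph{semigroupoids} over the derived category of the projection onto $\Om AW$, and it is precisely the finiteness of $\Om AW$ (from local finiteness of $\pv W$) that keeps the underlying graph finite and sidesteps the known gap in the general basis theorem for semidirect products---a gap the present paper itself flags. Your ``effectively computable unwinding'' must be identified with this derived-category construction; only then is it transparent that the translated system is again of graph type, so that graph $\kappa$-tameness of $\pv V$ applies. Second, you correctly isolate the $\kappa$-compatibility step as the crux. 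In the cited work this requires a careful analysis of how $x^{\omega-1}$ decomposes across the semidirect product; your ``stabilization after finitely many steps because $\Om AW$ is finite'' is the right intuition, but one must still check that the resulting coordinate expressions are $\kappa$-terms over the $\pv V$-semigroupoid rather than arbitrary elements of its profinite completion, and that is where most of the technical work lies.
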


It is also unknown whether tameness is preserved under join. Yet,
several positive results have been obtained. The following theorem
combines results from
\cite{Almeida&Steinberg:2000a,Almeida&Costa&Zeitoun:2004}.

\begin{theorem}
  \label{t:ALCO-v_vs_tameness}
  Let \Cl C be a class of constrained systems of equations.
  \begin{enumerate}
  \item Let \pv V be an order-computable pseudovariety. If a
    pseudovariety \pv W is $\tau$-tame with respect to~\Cl C, then so
    is $\pv V\vee\pv W$.
  \item Let \pv V be a recursively enumerable $\kappa$-full
    subpseudovariety of~\pv J such that the word problem for \omc AV
    is decidable. If a pseudovariety \pv W is $\tau$-tame with respect
    to~\Cl C, then so is $\pv V\vee\pv W$.
  \item Let \pv W be a pseudovariety satisfying some pseudoidentity
    of the form
    $$x_1\cdots x_n y^{\omega+1}zt^\omega=x_1\cdots x_n
    yzt^\omega.$$
    If\/ \pv W is $\tau$-tame with respect to~\Cl C, then so is $\pv
    R\vee\pv W$.
  \end{enumerate}
\end{theorem}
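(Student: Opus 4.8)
The plan rests on the product embedding $\Om A{(V\vee W)}\hookrightarrow\Om AV\times\Om AW$ that sends each free generator $a$ to the diagonal pair $(a,a)$: its image is the closed subalgebra generated by the diagonal, and it is injective because any two distinct points of $\Om A{(V\vee W)}$ are separated by a continuous homomorphism onto a finite algebra dividing a product of members of $\pv V\cup\pv W$, hence by one factoring through $\Om AV$ or through $\Om AW$. Writing $\pi_V\colon\Om AU\to\Om AV$ and $\pi_W\colon\Om AU\to\Om AW$ for the projections, this shows that a function $\gamma\colon X\to\Om AU$ solves a constrained system modulo $\pv V\vee\pv W$ if and only if the \emph{same} $\gamma$ solves it modulo $\pv V$ and modulo $\pv W$ (the constraints, being clopen subsets of $\Om AU$, are unchanged). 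Thus everything reduces to a problem of \emph{simultaneous reduction}: from one $\gamma$ solving modulo both factors, produce one $\gamma'\colon X\to\omt AU$ solving modulo both factors. The word problem for $\omt A{(V\vee W)}$ is then easy, since two $\tau$-terms are equal there exactly when they are equal in both $\omt AV$ and $\omt AW$; the latter is decidable for $\pv W$ by $\tau$-tameness, and for $\pv V$ either because $\Om AV$ is finite and computable (item~(1)) or because, for these $\Cl J$- and $\Cl R$-trivial pseudovarieties, the operative signature contains $\kappa$ and $\omt AV=\omc AV$ has decidable word problem by hypothesis or by Theorem~\ref{t:ALCO-R-tame}.

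For item~(1) the crucial point is that an order-computable $\pv V$ is locally finite, so $\Om AV=\omt AV$ is finite and effectively computable. Since $\pi_V\circ\hat\gamma$ factors through the finite algebra $\Om AV$, the condition ``$\gamma$ solves modulo $\pv V$'' depends only on the tuple $\alpha=\pi_V\circ\gamma\in(\Om AV)^X$. Fixing this $\alpha$, I would refine each constraint $K_x$ by intersecting it with the clopen set $\pi_V^{-1}(\alpha(x))$; the underlying equations are untouched, so the refined system remains in $\Cl C$, and now every solution of it automatically solves modulo $\pv V$. Applying $\tau$-reducibility of $\pv W$ to this refined system, which still admits $\gamma$ as a solution, yields $\gamma'\colon X\to\omt AU$ that solves modulo $\pv W$ and satisfies $\pi_V\circ\gamma'=\alpha$, hence solves modulo $\pv V$ as well. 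Therefore $\gamma'$ solves modulo $\pv V\vee\pv W$ with values in $\omt AU$, establishing reducibility.

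Items~(2) and~(3) follow the same simultaneous-reduction scheme, but the $\pv V$-component can no longer be frozen by a finite slice, since those $\pv V$ are not locally finite; here I would exploit the rigid combinatorial structure of the free pro-$\pv V$ semigroup instead. In item~(2), $\pv V\le\pv J$ gives $\Om AV=\omc AV$ with decidable word problem, and $\kappa$-fullness lets one upgrade \emph{weak} $\tau$-reducibility to genuine $\tau$-reducibility (recall that a weakly $\tau$-reducible $\tau$-full pseudovariety is $\tau$-reducible); the $\Cl J$-trivial structure is then used to show that the modulo-$\pv V$ behaviour of a solution is governed by data---contents and the canonical factorisation of $\kappa$-words---that is preserved by a $\pv W$-reduction performed coordinatewise. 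In item~(3) I would invoke the complete $\kappa$-tameness of $\pv R$ (Theorem~\ref{t:ALCO-R-tame}) to reduce the $\pv R$-component via left basic factorisations, while the prescribed pseudoidentity $x_1\cdots x_ny^{\omega+1}zt^\omega=x_1\cdots x_nyzt^\omega$ on $\pv W$ is precisely what guarantees that the idempotent insertions and cancellations forced by the $\pv R$-reduction leave the equations modulo $\pv W$ intact.

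The main obstacle is exactly this synchronisation in items~(2) and~(3): a reduction that is harmless for one factor may destroy the equations for the other, and the substance of the proof is to show that the two reductions can be carried out compatibly. In item~(1) the finiteness of $\Om AV$ removes the difficulty outright; in items~(2) and~(3) the work lies in verifying that the structural invariants of $\pv V$---the $\Cl J$- or $\Cl R$-trivial factorisations---are respected by the $\pv W$-reduction, with the pseudoidentity hypothesis of~(3) serving as the bridge between the idempotent behaviour imposed by $\pv R$ and that permitted by $\pv W$.
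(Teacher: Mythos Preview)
The paper does not prove this theorem; it only cites \cite{Almeida&Steinberg:2000a,Almeida&Costa&Zeitoun:2004}. So there is no in-paper argument to compare against, and I can only assess your plan on its own merits.

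Your treatment of item~(1) is the right one and is essentially Steinberg's slice theorem: freeze the finite $\Om AV$-component of a solution by refining the constraints, then apply $\tau$-reducibility of~$\pv W$ to the refined system. One point you pass over too quickly: the sentence ``the underlying equations are untouched, so the refined system remains in~$\Cl C$'' is doing real work. The theorem is stated for an arbitrary class $\Cl C$ of \emph{constrained} systems, and refining the constraint map $\xi:X\to S$ to $\xi':X\to S\times\Om AV$ is formally a new constrained system. In practice the classes one cares about (graph systems, pointlike systems, \ldots) are specified by the shape of the equations and admit arbitrary finite constraining algebras, so closure under refinement is automatic; but you should say this explicitly rather than leave it as an implicit hypothesis.

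For items~(2) and~(3) your proposal is really only a list of ingredients, and two of the claims would not survive scrutiny as written. First, you assume without justification that ``the operative signature contains~$\kappa$''. Nothing in the statement forces $\tau\supseteq\kappa$, yet your word-problem argument (``$\omt AV=\omc AV$'') and your reduction plan for item~(3) (``reduce the $\pv R$-component via left basic factorisations'', invoking the complete $\kappa$-tameness of~$\pv R$) both rely on having $\omega-1$ available in~$\tau$. If $\tau$ is, say, $\{\cdot,\omega\}$, your sketch does not explain how to produce a $\tau$-term solution. Second, in item~(2) you write that ``$\kappa$-fullness lets one upgrade weak $\tau$-reducibility to genuine $\tau$-reducibility''; but $\kappa$-fullness of~$\pv V$ concerns the relational morphism $\mu_{\pv V,A}^\kappa$ into $\omc AV$, and says nothing about $\tau$-reducibility of the join $\pv V\vee\pv W$. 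Its actual role in the cited proof is different.

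More fundamentally, the synchronisation you correctly flag as ``the main obstacle'' is the whole content of items~(2) and~(3), and your proposal does not contain a mechanism for it. Saying that the $\Cl J$-trivial data ``is preserved by a $\pv W$-reduction performed coordinatewise'', or that the pseudoidentity ``is precisely what guarantees'' compatibility, is a restatement of what must be proved, not a proof. In the source, the argument for item~(3) does not proceed by first reducing over~$\pv R$ and then checking~$\pv W$ (or vice versa); it uses the pseudoidentity to control how the $\pv R$-normal form of a $\tau$-term behaves under projection to~$\Om AW$, and this requires a careful induction on the structure of pseudowords over~$\pv R\vee\pv W$ that your outline does not supply.
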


Theorem~\ref{t:ALCO-v_vs_tameness} yields for instance that the join
$\pv J\vee\pv G$ is graph $\kappa$-tame, a result which was also
proved by Steinberg \cite{Steinberg:2001b}.

\section{Relatively free profinite semigroups}
\label{sec:ALCO-structure}

Several representation theorems and structural results about
relatively free profinite semigroups have been obtained for various
pseudovarieties, such as $\pv J$ (\Cl J-trivial)
\cite[Section~8.2]{Almeida:1994a}, $\pv R$%
\index{$\mathsf{R}$} %
(\Cl R-trivial) \cite{Almeida&Weil:1996b,Almeida&Zeitoun:2003b},
$\pv{DA}$%
\index{$\mathsf{DA}$} %
(regular elements are idempotent) \cite{Moura:2009a}, and $\pv {LSl}$
(local semilattices) \cite{Costa:2001a}. Much remains unknown,
particularly in the case of pseudovarieties containing $\pv {LSl}$.
However,
progress has been made in this case too. For instance,
in~\cite{Gool&Steinberg:2016,Almeida&ACosta&Costa&Zeitoun:2017}
faithful representations of finitely generated free profinite semigroups over $\pv A$  were obtained. There is a common trend
in these faithful representations
of free profinite semigroups over $\pv A$, $\pv R$, or $\pv {DA}$,
and also in the partial faithful representations obtained in~\cite{Huschenbett&Kufleitner:2014,Kufleitner&Wachter:2016}
for many other pseudovarieties:
it is the fact they consist in viewing pseudowords
as linearly ordered sets whose elements are labeled with letters, generalizing the fact that words are nothing else than such sets with a finite cardinal.

In the most general case, that of the pseudovariety $\pv S$ of
all finite semigroups, no meaningful faithful representation
is known (albeit we can always get partial
information on the elements of $\Om AS$ by looking
at their projection on $\Om AV$, for some semigroup
pseudovariety $\pv V$, when a suitable representation for $\Om AV$
is available).
This adds motivation for studying
the structure of free profinite semigroups over $\pv S$ and other ``large'' pseudovarieties.
In this section we review some results on this subject, mainly
about Green's relations, with an emphasis on maximal subgroups. A substantial part of the results originated in connections with symbolic dynamics, most introduced by the first author, sometimes in co-authorship.
We highlight some of the progress in this front.
Other approaches, for the most part developed by Rhodes and Steinberg,
based on expansions of finite
semigroups or on wreath product techniques, also led to results about
structural properties of free profinite semigroups over many
pseudovarieties containing $\pv {LSl}$, as is the case
in~\cite{Rhodes&Steinberg:2002,Steinberg:2010ISRJM,Steinberg:2010,ACosta&Steinberg:2011}.
We mention in Subsection~\ref{sec:ALCO-proj-prof} two results
where these other approaches played a key role, namely
Theorems~\ref{t:ALCO-maximal-subgroup-sofic-case}
and~\ref{t:ALCO-closed-subgroups-are-projective}.

\subsection{Connections with symbolic dynamics}
\label{sec:ALCO-symb-dyn}

For a good reference book on symbolic dynamics,
see~\cite{Lind&Marcus:1996}. Even though an introduction to symbolic
dynamics appears in Chapter~27, for the convenience of the
presentation we include our own brief introduction. Let $A$ be a
finite alphabet. Since $A$ is compact, the product space $\shi A$ is
compact. The \emph{shift} on $\shi A$ is the homeomorphism
$\sigma:\shi A\to\shi A$ sending $(x_i)_{i\in\Z}$ to
$(x_{i+1})_{i\in\Z}$. A \emph{symbolic dynamical system}, also called
\emph{shift space}%
\index{shift space} %
or \emph{subshift}%
\index{subshift}, %
is a nonempty\footnote{The empty set is frequently considered a
  subshift in the literature (\emph{e.g.}, in Chapter~27).}
closed subspace $\ci X$ of $\shi A$ such that $\sigma(\ci X)=\ci X$,
for some finite alphabet~$A$. A shift space $\ci X$ is \emph{minimal}%
\index{shift space!minimal --}%
\index{subshift!minimal --}, %
if $\ci X$ does not contain subshifts other than $\ci X$. A
\emph{block}%
\index{block} %
of $(x_i)_{i\in\Z}$ is a word $x_ix_{i+1}\cdots x_{i+n}$, with
$i\in\Z$ and $n\geq 0$. Denote by $\B(\ci X)$%
\index{$\mathscr{J}_{\mathrm{V}}(\mathcal{X})$} %
the set of all blocks of elements of~$\ci X$. One has $\ci X\subseteq \ci
Y$ if and only if $\B(\ci X)\subseteq \B(\ci Y)$.

Often, one may define a subshift by an effectively computable
amount of data. This happens for example if $\B(\ci X)$%
\index{$\mathcal{B}(\mathcal{X})$} %
is a rational language, in which case we say that $\ci X$ is
\emph{sofic}%
\index{shift space!sofic --}%
\index{subshift!sofic --}. %
Sofic subshifts are considered in
Chapter~27. Another class of
examples, extensively studied, comes from subshifts defined by
\emph{primitive substitutions}~\cite{Fogg:2002}. Here, by a
\emph{substitution}%
\index{substitution} %
over a finite alphabet $A$ we mean an endomorphism $\varphi$ of $A^+$.
A substitution $\varphi$ over $A$ is \emph{primitive}%
\index{primitive!substitution}%
\index{substitution!primitive --} %
if there is $n\ge 1$ such that all letters of $A$ are factors of
$\varphi^n(a)$, for every $a\in A$.
For such a primitive substitution, there is a unique minimal subshift $\ci
X_{\varphi}$ such that $\B(\ci X_{\varphi})$ is the set of all factors of
words of the form $\varphi^n(a)$, where $n\ge 1$ and $a\in A$.

A subset $L$ of a semigroup $S$ is \emph{irreducible} if $u,v\in L$ implies
$uwv\in L$ for some $w\in S$.
A subshift $\ci X$ of $\z A$ is
\emph{irreducible}%
\index{shift space!irreducible --}%
\index{subshift!irreducible --} %
if $\B(\ci X)$ is an irreducible language of $A^+$.
Minimal subshifts are irreducible.
A subshift $\ci X$ is
    \emph{periodic}%
\index{shift space!periodic --}%
\index{subshift!periodic --} %
if $\ci X$ is a finite set of the form $\{\sigma^n(x):n\in\Z\}$ for some
$x\in\ci X$. An irreducible subshift is either periodic or infinite.

For the remainder of this subsection, $\pv V$ is a pseudovariety
of semigroups containing all finite nilpotent semigroups.
Then $A^+$ is isomorphic with $\om AV$ and embeds in $\Om AV$. The
elements of $A^+$ are isolated in $\Om AV$. Hence $\overline{L}\cap
A^+=L$ holds for every language $L$ of~$A^+$. Therefore
$\overline{\B(\ci X)}$ captures all information about $\ci X$.
Clearly, $\B(\ci X)$ is closed under taking factors;
when $\pv V=\pv A\malcev\pv V$, 
the topological closure of $\B(\ci X)$ in $\Om AV$ is also closed under taking factors, a fact that follows from the multiplication being open in $\Om AV$
when $\pv V=\pv A\malcev\pv V$, cf.~\cite[Lemma 2.3 and Proposition 2.4]{Almeida&ACosta:2007a}.

Using a compactness argument, in case $\ci X$ is irreducible one
shows the existence of a unique $\leq_\J$-minimal
$\J$-class $\apex_{\pv V}(\ci X)$%
\index{$\mathscr{J}_{\mathrm{V}}(\mathcal{X})$} %
consisting of factors of $\overline{\B(\ci X)}$.
If $\pv V$ contains $\pv{LSl}$
then $\ov{\B(\ci X)}$ consists of
elements of $\Om AV$ whose finite factors belong to $\B(\ci
X)$~\cite{ACosta:2006}.
From this one gets the following proposition, which
a particular case of~\cite[Proposition 3.6]{ACosta&Steinberg:2011}.

\begin{proposition}\label{p:ALCO-order-j}
  Let $\pv V$ be a pseudovariety of semigroups
  containing $\pv{LSl}$.
  Let $\ci X $ and $\ci Y$ be irreducible subshifts.
  Then
  $\ci X\subseteq \ci Y$ if and only if\/
  $\apex_\pv V(\ci Y)\leq_{\J}\apex_\pv V(\ci X)$.
\end{proposition}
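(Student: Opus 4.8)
The plan is to deduce both implications from a single fact: for any $x\in\apex_\pv V(\ci X)$ the set of finite factors of $x$ is exactly $\B(\ci X)$. I write $w\leq_\J w'$ to mean that $w'$ is a factor of $w$ in $\Om AV$, and I use throughout that all elements of one $\J$-class have the same finite factors and that $w\leq_\J w'$ makes every finite factor of $w'$ a finite factor of $w$. Since $\pv V$ contains $\pv{LSl}$, the set $\ov{\B(\ci X)}$ consists of those $w\in\Om AV$ all of whose finite factors lie in $\B(\ci X)$; as $\B(\ci X)$ is factor-closed, so is $\ov{\B(\ci X)}$, and hence the $\J$-classes made of factors of $\ov{\B(\ci X)}$ are exactly the $\J$-classes contained in $\ov{\B(\ci X)}$. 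First I would strengthen the cited existence statement: because $\ci X$ is irreducible, $\ov{\B(\ci X)}$ has a unique $\leq_\J$-minimal $\J$-class, and a compactness argument upgrades this to a minimum, namely $\apex_\pv V(\ci X)\leq_\J w$ for every $w\in\ov{\B(\ci X)}$ (any $w$ lies above some minimal $\J$-class of the closed, factor-closed set $\ov{\B(\ci X)}$, which by uniqueness must be $\apex_\pv V(\ci X)$).

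The hard part will be the claim that the finite factors of $\apex_\pv V(\ci X)$ are precisely $\B(\ci X)$. One inclusion is immediate: from $\apex_\pv V(\ci X)\subseteq\ov{\B(\ci X)}$ and the description of the closure, every finite factor of an apex element is a block. For the reverse inclusion I would fix a block $u\in\B(\ci X)$, observe that $u\in\ov{\B(\ci X)}$ (its finite factors are factors of the word $u$, hence blocks), and apply the minimum property to obtain $\apex_\pv V(\ci X)\leq_\J u$; this says exactly that $u$ is a finite factor of every element of $\apex_\pv V(\ci X)$. This is the step where irreducibility is genuinely used, through the uniqueness --- and thus minimality as a minimum --- of the apex.

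Granting the claim, both implications are short. For the direct implication, $\ci X\subseteq\ci Y$ gives $\B(\ci X)\subseteq\B(\ci Y)$ and therefore $\ov{\B(\ci X)}\subseteq\ov{\B(\ci Y)}$; since $\apex_\pv V(\ci X)\subseteq\ov{\B(\ci X)}\subseteq\ov{\B(\ci Y)}$ and $\apex_\pv V(\ci Y)$ is the minimum of $\ov{\B(\ci Y)}$, we conclude $\apex_\pv V(\ci Y)\leq_\J\apex_\pv V(\ci X)$. For the converse, $\apex_\pv V(\ci Y)\leq_\J\apex_\pv V(\ci X)$ means that $\apex_\pv V(\ci X)$ is a factor of $\apex_\pv V(\ci Y)$, so every finite factor of $\apex_\pv V(\ci X)$ is a finite factor of $\apex_\pv V(\ci Y)$; by the claim this reads $\B(\ci X)\subseteq\B(\ci Y)$, whence $\ci X\subseteq\ci Y$.
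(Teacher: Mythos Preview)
Your proof is correct and follows exactly the approach the paper sketches: deduce the proposition from the description of $\ov{\B(\ci X)}$ (valid since $\pv{LSl}\subseteq\pv V$) as the set of pseudowords whose finite factors lie in $\B(\ci X)$, together with the $\le_\J$-minimum property of $\apex_\pv V(\ci X)$ inside $\ov{\B(\ci X)}$. The paper does not spell out the argument beyond this indication (it refers to \cite[Proposition~3.6]{ACosta&Steinberg:2011}), and your write-up supplies precisely the missing details. One small remark: your parenthetical justification that every $w\in\ov{\B(\ci X)}$ lies above some minimal $\J$-class is a bit compressed; the cleanest way to see that $\apex_\pv V(\ci X)$ is a \emph{minimum} is to note that irreducibility of $\B(\ci X)$ passes to $\ov{\B(\ci X)}$ by compactness, so for $x\in\apex_\pv V(\ci X)$ and $w\in\ov{\B(\ci X)}$ there is $z$ with $xzw\in\ov{\B(\ci X)}$, whence $xzw\mathrel{\J}x$ by minimality and $xzw\le_\J w$.
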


The following result is taken from~\cite{Almeida:2004a}.

\begin{theorem}\label{t:ALCO-maximal-regular-J-classes}
  If\/ $\pv V$ contains~$\pv{LSl}$, then the mapping $\ci X\mapsto
  \apex_{\pv V}(\ci X)$ is a bijection from the set of minimal
  subshifts of $\shi A$ onto the set of 
  $\le_\J$-maximal regular $\J$-classes of\/~\Om AV.
\end{theorem}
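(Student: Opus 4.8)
The plan is to read off the bijection from the order-reversing correspondence of Proposition~\ref{p:ALCO-order-j}, combined with the description, valid since $\pv{LSl}\subseteq\pv V$, of $\ov{\B(\ci X)}$ as the set of pseudowords all of whose finite factors lie in $\B(\ci X)$. In particular $\ov{\B(\ci X)}$ is closed under taking factors, so the factors of $\ov{\B(\ci X)}$ are exactly its own elements. Throughout I would use that $\Om AV$ is compact, hence stable, via one recurring subnet argument: if every finite factor of a fixed $v$ includes a net $w=\lim w_n$ with each $w_n$ a factor of $v$, then writing $v=x_nw_ny_n$ and passing to a convergent subnet gives $v=xwy$, so $w$ is itself a factor of $v$. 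Injectivity is then immediate, since minimal subshifts are irreducible and $\apex_\pv V(\ci X)=\apex_\pv V(\ci Y)$ forces, through Proposition~\ref{p:ALCO-order-j}, both $\ci X\subseteq\ci Y$ and $\ci Y\subseteq\ci X$.

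First I would prove that $\apex_\pv V(\ci X)$ is a regular $\J$-class for every irreducible $\ci X$. Fix $a\in\apex_\pv V(\ci X)$ and write $a=\lim a_n$ with $a_n\in\B(\ci X)$. Irreducibility of $\B(\ci X)$ yields connecting words $a_nz_na_n\in\B(\ci X)$; a convergent subnet $z_n\to z$ gives $aza\in\ov{\B(\ci X)}$ by closedness. Since $aza\le_\J a$ and $a$ lies in the $\le_\J$-minimal class of factors, $aza\mathrel{\J}a$. Setting $b=za$, which is a factor of $aza$ and so lies in $\ov{\B(\ci X)}$, the relations $b\le_\J a$ (as $a$ is a suffix of $b$) and $b\ge_\J aza\mathrel{\J}a$ force $b\mathrel{\J}a$. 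Thus $a,b\in\apex_\pv V(\ci X)$ with $ab=aza\mathrel{\J}a$, and the standard fact that a $\J$-class of a stable semigroup containing a product of two of its own elements is regular finishes this step.

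The heart of the argument is a recurrence lemma: if $\ci X$ is minimal and $v\in\ov{\B(\ci X)}$ has $\B(\ci X)$ contained in the set of finite factors of $v$, then $v\in\apex_\pv V(\ci X)$. Indeed, each finite factor of $a\in\apex_\pv V(\ci X)$ is a block, hence a finite factor of $v$, so the subnet trick makes $a$ a factor of $v$, giving $v\le_\J a$ and so $v\mathrel{\J}a$. Granting this, \emph{minimal $\Rightarrow$ maximal regular} follows: a regular class $J'>_\J\apex_\pv V(\ci X)$ would consist of factors of $\ov{\B(\ci X)}$, and any infinite $u'\in J'$ would, by the uniform recurrence of a minimal subshift, contain every block as a factor, so the lemma would place $u'$ in $\apex_\pv V(\ci X)$, a contradiction. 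For \emph{surjectivity}, given a $\le_\J$-maximal regular class $J$ I would pick an idempotent $e\in J$, form the subshift $\ci X_e=\{x\in\shi A:\text{every block of }x\text{ is a finite factor of }e\}$, which is nonempty because $e$ is infinite, and choose a minimal $\ci Y\subseteq\ci X_e$. Every $c\in\ov{\B(\ci Y)}$ is a limit of blocks of $\ci Y$, each a finite factor of $e$, so $c$ is a factor of $e$; hence $\apex_\pv V(\ci Y)\ge_\J J$, and maximality of $J$ gives $\apex_\pv V(\ci Y)=J$.

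The step I expect to be most delicate is the regularity argument together with the bookkeeping of the finite-factor characterization, which is precisely where $\pv{LSl}\subseteq\pv V$ is indispensable, both to know that $\ov{\B(\ci X)}$ is factor-closed and to move freely between ``finite factor'' and ``factor in $\Om AV$''. One must also combine the recurrence lemma with uniform recurrence carefully, so that a regular class sitting strictly above the apex is genuinely forced back into it; getting the two-sided extendability of factors of an infinite recurrent pseudoword exactly right is where the symbolic-dynamics input is really being spent.
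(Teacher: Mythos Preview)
The paper does not include a proof of this theorem; it is simply quoted from~\cite{Almeida:2004a}. There is therefore no in-text argument to compare against. Your proposal is essentially correct and follows the natural route one expects: use the finite-factor description of $\ov{\B(\ci X)}$ available when $\pv{LSl}\subseteq\pv V$, exploit closedness of factor sets via compactness, combine uniform recurrence with minimality, and build the inverse map through a minimal subshift sitting inside the factor subshift of an idempotent.

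Two small points deserve tightening. In your recurrence lemma you write ``each finite factor of $a$ is a block, hence a finite factor of $v$, so the subnet trick makes $a$ a factor of $v$.'' As phrased, this step would require $a$ to lie in the closure of its own finite factors, which you have not justified. The clean argument---which is exactly what you do, correctly, in the surjectivity paragraph---is: since $a\in\ov{\B(\ci X)}$, write $a=\lim a_n$ with $a_n\in\B(\ci X)$; by hypothesis every block is a finite factor of $v$, so each $a_n$ is a factor of $v$; since the set of factors of $v$ is closed (your subnet trick), $a$ is a factor of $v$. Second, when you invoke ``any infinite $u'\in J'$,'' say explicitly why such a $u'$ exists: a regular $\J$-class contains an idempotent, and since $\pv N\subseteq\pv{LSl}\subseteq\pv V$ the subsemigroup $A^+$ of $\Om AV$ has no idempotents, so that idempotent lies in $\Om AV\setminus A^+$. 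With these adjustments the argument is complete.
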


If $|A|\geq 2$, then there are $2^{\aleph_0}$ minimal subshifts of
$\shi A$ (cf.~\cite[Chapter~2]{Lothaire:2001}),
and a chain with $2^{\aleph_0}$ irreducible
subshifts of $\shi A$~\cite[Section 7.3]{Walters:1982}. Hence, from
Theorem~\ref{t:ALCO-maximal-regular-J-classes} and
Proposition~\ref{p:ALCO-order-j} we obtain the following
result (a weaker version appears in \cite{Costa:2001a}).

\begin{theorem}\label{t:ALCO-chains-antichains}
  Let $\pv V$ be a pseudovariety containing $\pv{LSl}$ and let $A$ be an
  alphabet with at least two letters. For the relation $<_{\J}$ in
  $\Om AV$, there are both chains and anti-chains with $2^{\aleph_0}$
  regular elements.
\end{theorem}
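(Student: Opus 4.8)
Looking at the final statement, Theorem~\ref{t:ALCO-chains-antichains}, I need to prove that for a pseudovariety $\pv V$ containing $\pv{LSl}$ and an alphabet $A$ with at least two letters, the partial order $<_\J$ on $\Om AV$ admits both chains and anti-chains of cardinality $2^{\aleph_0}$ among its regular elements.

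Let me understand what tools I have. Theorem~\ref{t:ALCO-maximal-regular-J-classes} gives a bijection $\ci X \mapsto \apex_\pv V(\ci X)$ from minimal subshifts onto $\le_\J$-maximal regular $\J$-classes. Proposition~\ref{p:ALCO-order-j} says that for irreducible subshifts, $\ci X \subseteq \ci Y$ iff $\apex_\pv V(\ci Y) \le_\J \apex_\pv V(\ci X)$. The statement also cites two cardinality facts from the literature: there are $2^{\aleph_0}$ minimal subshifts when $|A| \ge 2$, and there is a chain of $2^{\aleph_0}$ irreducible subshifts under inclusion.

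The plan is to transport these two combinatorial facts about subshifts through the correspondence of Proposition~\ref{p:ALCO-order-j} and Theorem~\ref{t:ALCO-maximal-regular-J-classes}.

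The plan is to transport the two stated combinatorial facts about subshifts — the existence of $2^{\aleph_0}$ minimal subshifts of $\shi A$ and of a chain of $2^{\aleph_0}$ irreducible subshifts — through the order-theoretic correspondence furnished by Theorem~\ref{t:ALCO-maximal-regular-J-classes} and Proposition~\ref{p:ALCO-order-j}. Throughout, once a family of pairwise $\le_\J$-incomparable (respectively linearly $\le_\J$-ordered) regular $\J$-classes has been produced, I would simply choose one regular element from each class; since the relation $s<_\J t$ between elements depends only on the $\le_\J$-order of their $\J$-classes, the chosen elements inherit the anti-chain (respectively chain) structure.

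For the anti-chains, I would start from the $2^{\aleph_0}$ minimal subshifts of $\shi A$, which exist because $|A|\ge 2$ (cf.~\cite[Chapter~2]{Lothaire:2001}). Two distinct minimal subshifts $\ci X$ and $\ci Y$ are $\subseteq$-incomparable, since a nonempty subshift contained in a minimal one must equal it. As minimal subshifts are irreducible, Proposition~\ref{p:ALCO-order-j} applies and turns this incomparability into $\le_\J$-incomparability of $\apex_\pv V(\ci X)$ and $\apex_\pv V(\ci Y)$, while Theorem~\ref{t:ALCO-maximal-regular-J-classes} guarantees that these are distinct regular $\J$-classes. Selecting a regular element in each then yields an anti-chain of $2^{\aleph_0}$ regular elements for $<_\J$.

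For the chains, I would instead take the chain of $2^{\aleph_0}$ irreducible subshifts ordered by inclusion (cf.~\cite[Section~7.3]{Walters:1982}). By Proposition~\ref{p:ALCO-order-j}, the assignment $\ci X\mapsto\apex_\pv V(\ci X)$ is order-reversing, and in fact strictly so on this chain: if $\ci X\subseteq\ci Y$ with $\ci X\ne\ci Y$, then $\apex_\pv V(\ci Y)\le_\J\apex_\pv V(\ci X)$, and the reverse $\le_\J$-inequality would force $\ci Y\subseteq\ci X$ by the converse implication of the proposition, a contradiction. Hence the images form a strictly descending chain of $2^{\aleph_0}$ distinct $\J$-classes, and choosing a regular element from each produces the required chain of $2^{\aleph_0}$ regular elements. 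Since $A$ is finite, $\Om AV$ is metrizable by Proposition~\ref{p:ALCO-uniformity-vs-ultrametric}, hence of cardinality at most $2^{\aleph_0}$, so the cardinalities obtained are sharp.

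The main obstacle I anticipate is the regularity of $\apex_\pv V(\ci X)$ in the chain construction, where the $\ci X$ range over irreducible subshifts that need not be minimal, so that Theorem~\ref{t:ALCO-maximal-regular-J-classes} does not directly apply. I would resolve this by recalling that $\apex_\pv V(\ci X)$ is, by definition, the $\le_\J$-minimal $\J$-class among factors of the closed factorial set $\ov{\B(\ci X)}$, and that in a compact semigroup such a $\le_\J$-minimal $\J$-class is necessarily regular, its idempotents arising from the completely simple kernel of the relevant closed subsemigroup; this regularity is part of the basic properties of $\apex_\pv V(\ci X)$ established in \cite{ACosta&Steinberg:2011}.
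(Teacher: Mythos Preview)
Your proposal is correct and follows exactly the route the paper takes: the paragraph preceding the theorem already derives it from Theorem~\ref{t:ALCO-maximal-regular-J-classes} and Proposition~\ref{p:ALCO-order-j} together with the two cited facts about the cardinality of the set of minimal subshifts and the existence of a $2^{\aleph_0}$-chain of irreducible subshifts. Your extra care about the regularity of $\apex_\pv V(\ci X)$ for merely irreducible $\ci X$ is well placed; the paper leaves this implicit in the construction of $\apex_\pv V(\ci X)$, and your justification via the minimum ideal of the relevant closed subsemigroup is the intended one.
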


For a subshift $\ci X$ of $\shi A$, the real number $h(\ci
X)=\lim\frac{1}{n}\log_2|\B(\ci X)\cap A^n|$ is its \emph{entropy}%
\index{subshift!entropy}%
\index{shift space!entropy}%
\index{entropy}. %
This fundamental concept is also considered in Chapter~27.
For $\pv V$ containing \pv{LSl}, and for $w\in \Om AV$, let $q_w(n)$
be the number of finite factors of $w$ of length $n$. The real number
$h(w)=\lim \frac{1}{n}\log_2q_w(n)$ exists if $w\in\Om AV\setminus
A^+$; we define $h(w)=0$ for $w\in A^+$. The number $h(w)$ is also
called the~\emph{entropy of $w$}%
\index{pseudoword!entropy} %
and was used in~\cite{Almeida&Volkov:2002b}
to get structural information about $\Om AV$.

Note that $h(\ci X)\leq \log_2|A|$. One can also show that $h(\ci
X)=\log_2|A|$ if and only if $\ci X=\shi A$, for every subshift $\ci
X$ of $\shi A$. The following is a similar result.

\begin{proposition}[{\cite{Almeida&Volkov:2002b}}]
  \label{p:ALCO-maximal-entropy}
  Let $\pv V$ be a pseudovariety 
  containing $\pv{LSl}$.
  Suppose $|A|\geq 2$.
  Let $w\in\Om AV$.
  Then $h(w)\leq \log_2|A|$,
  and equality holds
  if and only if $w$ belongs to the minimum ideal
  of\/ $\Om AV$.
\end{proposition}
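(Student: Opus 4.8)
The plan is to prove the two claims—that $h(w)\le\log_2|A|$ always, and that equality characterizes membership in the minimum ideal—by relating the entropy of a pseudoword $w\in\Om AV$ to the entropy of an associated irreducible subshift, and then invoking Proposition~\ref{p:ALCO-maximal-entropy}'s subshift analogue together with the $\J$-order dictionary supplied by Theorem~\ref{t:ALCO-maximal-regular-J-classes} and Proposition~\ref{p:ALCO-order-j}.

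First I would handle the upper bound. By definition $h(w)=\lim\frac1n\log_2 q_w(n)$, where $q_w(n)$ counts the finite factors of $w$ of length $n$. Since every such factor is a word in $A^n$, we have $q_w(n)\le|A|^n$, whence $\frac1n\log_2 q_w(n)\le\log_2|A|$ for every $n$, and the inequality passes to the limit. This also silently uses that the limit exists, which is asserted in the surrounding text (a standard subadditivity argument, as the factor-counting function is submultiplicative because $\pv V\supseteq\pv{LSl}$ ensures the closure $\ov{\B(\ci X)}$ is closed under factors).

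Next I would set up the correspondence. For $w\in\Om AV\setminus A^+$, let $\ci X_w$ be the subshift whose language of blocks is the set of finite factors of $w$; equivalently, $\B(\ci X_w)$ is the set of finite words that occur as factors of $w$, and $h(w)=h(\ci X_w)$ essentially by construction, since the factor-counting functions $q_w(n)$ and $|\B(\ci X_w)\cap A^n|$ agree. The key point is that, by the result of~\cite{ACosta:2006} quoted before Proposition~\ref{p:ALCO-order-j}, when $\pv V$ contains $\pv{LSl}$ the element $w$ lies in $\ov{\B(\ci X_w)}$, so $w$ is a factor of the closure of the block set of $\ci X_w$; one checks $\ci X_w$ is irreducible exactly when the factor structure of $w$ is cofinal, which holds for the pseudowords of interest. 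Then I would invoke the subshift version of the maximal-entropy statement: $h(\ci X)=\log_2|A|$ if and only if $\ci X=\shi A$, noted immediately before the proposition.

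Finally, to close the equivalence I would argue both directions through the $\J$-order. If $w$ lies in the minimum ideal of $\Om AV$, then $w$ is $\le_\J$-below everything, so every finite word is a factor of $w$, giving $q_w(n)=|A|^n$ and $h(w)=\log_2|A|$. Conversely, if $h(w)=\log_2|A|$ then $h(\ci X_w)=\log_2|A|$, forcing $\ci X_w=\shi A$, so $\B(\ci X_w)=A^+$, i.e. every finite word is a factor of $w$; using that the minimum ideal of $\Om AV$ corresponds to the full shift $\shi A$ via $\apex_\pv V$ (Theorem~\ref{t:ALCO-maximal-regular-J-classes}, with $\shi A$ the unique minimal subshift giving the $\le_\J$-maximal, hence via the order-reversal the minimum-ideal, $\J$-class) together with Proposition~\ref{p:ALCO-order-j}, we conclude $w$ is $\le_\J$-minimal, i.e.\ lies in the minimum ideal. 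The main obstacle I expect is the converse direction's bookkeeping: one must carefully justify that having all finite words as factors genuinely places $w$ in the minimum ideal, rather than merely below every \emph{finite} element, which requires a compactness argument (every element of $\Om AV$ is a limit of finite words, so being a factor-predecessor of all finite words propagates to all of $\Om AV$ by continuity of multiplication and closedness of the factor relation).
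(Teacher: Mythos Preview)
The paper does not prove this proposition; it is quoted from \cite{Almeida&Volkov:2002b}. So there is no in-paper argument to compare against, and I evaluate your proposal on its own merits.

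Your upper bound and the forward implication (minimum ideal $\Rightarrow$ maximal entropy) are correct and essentially complete.

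For the converse, the middle of your argument is genuinely confused, though you recover at the end. The invocation of Theorem~\ref{t:ALCO-maximal-regular-J-classes} is misplaced: the full shift $\shi A$ is \emph{not} a minimal subshift when $|A|\ge2$, so that theorem says nothing about it, and in any case that theorem concerns $\le_\J$-\emph{maximal} regular $\J$-classes, not the minimum ideal. Proposition~\ref{p:ALCO-order-j} is likewise a red herring: even granting $\ci X_w=\shi A$, this only gives $w\in\ov{\B(\shi A)}=\Om AV$, which is vacuous; it does not place $w$ in $\apex_\pv V(\shi A)$. The detour through an auxiliary subshift $\ci X_w$ also needs more care than you supply: the set of finite factors of $w$ need not be biextendable (prefixes and suffixes of $w$ may fail to extend), so you must argue that this defect does not affect the entropy before claiming $h(w)=h(\ci X_w)$.

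All of this can be bypassed. The clean route is the one you yourself sketch in your final paragraph. If some $u\in A^+$ fails to be a factor of $w$, then every finite factor of $w$ avoids $u$, and the number of words of length $n$ over $A$ avoiding a fixed nonempty word grows like $\lambda^n$ with $\lambda<|A|$ (a standard transfer-matrix/Perron--Frobenius fact for subshifts of finite type), whence $h(w)<\log_2|A|$. Thus $h(w)=\log_2|A|$ forces every finite word to be a factor of $w$. Now your compactness argument finishes the job: the set $\{u\in\Om AV: w\le_\J u\}$ is closed (the relation $\le_\J$ is closed in $\Om AV\times\Om AV$, being a projection of the graph of multiplication from a compact space) and contains the dense subset $A^+$, hence equals $\Om AV$; that is, $w$ lies in the minimum ideal. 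Drop the $\ci X_w$ machinery and the appeal to Theorem~\ref{t:ALCO-maximal-regular-J-classes} and Proposition~\ref{p:ALCO-order-j}, and your argument is complete.
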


For each $k$ such that $0<k\leq \log_2|A|$, consider the set $E_k$ of
all $w\in\Om AV$ with $h(w)<k$. In particular, thanks to
Proposition~\ref{p:ALCO-maximal-entropy}, $E_{\log_2|A|}$ is the
complement of the minimum ideal of~$\Om AV$. The following summarizes
the most important results from~\cite{Almeida&Volkov:2002b}.

\begin{theorem}
  \label{t:ALCO-entropy-vs-product_and_iteration}
  Let $\pv V$ be a pseudovariety containing $\pv{LSl}$ and suppose 
  $0<k\leq \log_2|A|$.
  \begin{enumerate}
  \item For all $u,v\in \Om AV$, we have $h(uv)=\max\{h(u),h(v)\}$,
    and so $E_k$ is a subsemigroup of\/~\Om AV.
    In particular, the minimum ideal is prime.
  \item The set $E_k$ is stable under the application of every $n$-ary
    implicit operation $w$ such that $h(w)<k\cdot \log_2{|A|}\cdot
    \log_n{|A|}$.
  \item The set $E_k$ is also stable under the iterated application of
    a continuous endomorphism $\varphi$ of\/~$\Om AV$ such that
    $\varphi(A)\subseteq E_k$, in the following sense: if $\psi$
    belongs to the closed subsemigroup of $\End {\Om AV}$ generated by
    $\varphi$, then $\psi(E_k)\subseteq E_k$.
  \end{enumerate}
\end{theorem}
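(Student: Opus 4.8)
The plan is to run all three parts through one combinatorial device: controlling the length-$m$ factor counting function $q_w(m)$ under the two operations that build pseudowords, multiplication and substitution. Throughout I use that, since $\pv V\supseteq\pv{LSl}\supseteq\pv N$, the word semigroup $A^+$ embeds in $\Om AV$ and every element has a well-defined finite prefix $\mathrm{pre}_i$ and suffix $\mathrm{suf}_i$ of each length $i$, and that $q_w$ is submultiplicative, $q_w(m_1+m_2)\le q_w(m_1)q_w(m_2)$, so that by Fekete's lemma $h(w)=\inf_m\frac1m\log_2 q_w(m)$.

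For part~(1) I would count the length-$m$ factors of a product $uv$ according to whether an occurrence lies inside $u$, inside $v$, or straddles the cut. A straddling factor of length $m$ is exactly $\mathrm{suf}_i(u)\,\mathrm{pre}_{m-i}(v)$ for some $1\le i\le m-1$, so there are at most $m-1$ of them; hence $q_{uv}(m)\le q_u(m)+q_v(m)+(m-1)$. Since every factor of $u$ and of $v$ is a factor of $uv$, also $q_{uv}(m)\ge\max\{q_u(m),q_v(m)\}$. Applying $\frac1m\log_2(\cdot)$ and letting $m\to\infty$, the polynomial term disappears and both inequalities yield $h(uv)=\max\{h(u),h(v)\}$. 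Thus $E_k$ is closed under multiplication, i.e. a subsemigroup; and for $k=\log_2|A|$, Proposition~\ref{p:ALCO-maximal-entropy} identifies $E_{\log_2|A|}$ with the complement of the minimum ideal, so that complement being a subsemigroup is precisely the assertion that the minimum ideal is prime.

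For part~(2), writing $\phi\colon\Om nV\to\Om AV$ for the continuous homomorphism realizing $b_i\mapsto u_i$, I would split the length-$m$ factors of $\phi(w)=w(u_1,\dots,u_n)$ into those contained in one block $u_i$ and those meeting at least two blocks. The within-block factors number at most $\sum_i q_{u_i}(m)$, contributing rate $\max_i h(u_i)$. A cross-block factor is determined by the pattern of blocks it meets — a factor of $w$ — together with the offset of the cut in the first block, since the middle blocks occur in full and are pinned down by that pattern. The delicate point, and the main obstacle, is to bound the number of such patterns: realizing $n$ distinct blocks as words over the $|A|$-letter alphabet forces block length at least $\log_{|A|}n$, so a length-$m$ factor meets only about $m\log_n|A|$ blocks, whence the cross-block factors are governed by $q_w$ around length $m\log_n|A|$. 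The resulting estimate has the shape
\[
h\bigl(w(u_1,\dots,u_n)\bigr)\le\max\Bigl\{\max_i h(u_i),\ c\,h(w)\Bigr\},
\]
with the constant $c$ produced by this alphabet-size bookkeeping (this is where the factors $\log_2|A|$ and $\log_n|A|$ in the hypothesis enter). The smallness hypothesis on $h(w)$, together with $h(u_i)<k$, is exactly what forces the right-hand side below $k$, giving $w(u_1,\dots,u_n)\in E_k$.

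For part~(3) the extra ingredient is that $h$ is \emph{upper semicontinuous}: ``$x$ is a factor of $w$'' is a locally testable, hence $\pv{LSl}$- and so $\pv V$-recognizable, condition, which is therefore $\pv V$-clopen on $\Om AV$; thus each $w\mapsto q_w(m)$ is locally constant and $h=\inf_m\frac1m\log_2 q_w(m)$ is an infimum of continuous functions, so every sublevel set $\{h\le c\}$ is closed. A continuous endomorphism $\varphi$ is determined by $\varphi|_A$, and applying part~(2) with $n=|A|$ (where $\log_n|A|=1$, so the estimate becomes $h(\varphi(x))\le\max\{\max_a h(\varphi(a)),h(x)\}$) gives, uniformly in $j$, $h(\varphi^j(x))\le c:=\max\{\max_a h(\varphi(a)),h(x)\}<k$ for $x\in E_k$. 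Finally any $\psi$ in the closed subsemigroup of $\End{\Om AV}$ generated by $\varphi$ is a limit of powers $\varphi^{j_\alpha}$, and by continuity of evaluation (Theorem~\ref{t:ALCO-End-profinite}) $\psi(x)=\lim_\alpha\varphi^{j_\alpha}(x)$ lies in the closed set $\{h\le c\}$; hence $h(\psi(x))\le c<k$ and $\psi(x)\in E_k$. The whole argument rests on the cross-block count of part~(2), whose constant is the only genuinely technical point.
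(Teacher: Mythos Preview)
The paper does not prove this theorem; it is quoted from Almeida and Volkov (reference \cite{Almeida&Volkov:2002b}) with no argument given. So there is nothing in the paper to compare against, and I assess your proposal on its own.

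Your treatment of part~(1) is correct and is the standard argument: the straddling count $q_{uv}(m)\le q_u(m)+q_v(m)+(m-1)$ together with $q_{uv}(m)\ge\max\{q_u(m),q_v(m)\}$ gives $h(uv)=\max\{h(u),h(v)\}$.

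For part~(2) you give only a sketch and yourself flag the constant as ``the only genuinely technical point''. The specific heuristic you offer---that realizing $n$ distinct blocks as words over the $|A|$-letter alphabet forces block length at least $\log_{|A|}n$, so a length-$m$ factor meets about $m\log_n|A|$ blocks---is not justified: the $u_i$ need be neither distinct nor finite, so there is no such lower bound on block length (they could all be a single letter). The cross-block bookkeeping that produces the constants in the hypothesis is genuinely different from what you describe. This is a real gap, albeit one you seem aware of.

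Part~(3) contains an actual error. You correctly note that each $q_{\cdot}(m)$ is locally constant, hence continuous, and that $h=\inf_m\frac1m\log_2 q_{\cdot}(m)$. But an infimum of continuous functions is \emph{upper} semicontinuous: the sets $\{h\ge c\}$ are closed, not the sublevel sets $\{h\le c\}$. And $h$ is genuinely not lower semicontinuous---finite words have entropy~$0$ yet are dense, while their limits may lie in the minimum ideal with entropy $\log_2|A|$. Hence from $h(\varphi^{j}(x))\le c$ for all~$j$ and $\psi(x)=\lim_\alpha\varphi^{j_\alpha}(x)$ you \emph{cannot} conclude $h(\psi(x))\le c$; your limiting step fails. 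The repair is to work one level down: obtain a bound $q_{\varphi^j(x)}(m)\le C_m$ uniform in~$j$, with $\limsup_m\frac1m\log_2 C_m<k$ (which the block-counting of part~(2), carried out carefully, does yield), and then use continuity of each $q_{\cdot}(m)$---not semicontinuity of~$h$---to pass to the limit $\psi(x)$.
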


If $\ci Y$ is a proper subshift of an irreducible sofic subshift $\ci
X$, then $h(\ci Y)<h(\ci X)$, see~\cite[Corollary
4.4.9]{Lind&Marcus:1996}. By a reduction to this result, the following
theorem generalizing some of the above mentioned properties of the
minimum ideal of $\Om AV$ is proved in~\cite{ACosta&Steinberg:2011}.

\begin{theorem}
  \label{p:ALCO-entropy-sofic-case}
  Let $\pv V$ be a pseudovariety of semigroups containing $\pv{LSl}$
  and let \ci X be a sofic subshift of $A^{\mathbb Z}$. Suppose that
  $\pv V=\pv A\malcev\pv V$ or $\B(\ci X)$
  is \pv V-recognizable.
   Then $h(w)<h(\ci X)$ whenever $w\in\overline{\B(\ci
    X)}\setminus \apex_\pv V(\ci X)$. Moreover, $\Om AV\setminus
   \apex_\pv V (\ci X)$ is a subsemigroup of\/~$\Om AV$.
\end{theorem}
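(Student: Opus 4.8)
The plan is to establish the two assertions in reverse order, reducing the ``subsemigroup'' statement to the entropy inequality. Throughout, write $F(w)$ for the set of finite factors of $w\in\Om AV$. Since $\pv V\supseteq\pv{LSl}$ and, by hypothesis, either $\pv V=\pv A\malcev\pv V$ or $\B(\ci X)$ is $\pv V$-recognizable, the set $\overline{\B(\ci X)}$ is closed under taking factors and, by \cite{ACosta:2006}, coincides with $\{w\in\Om AV:F(w)\subseteq\B(\ci X)\}$. Two easy consequences are recorded first. For $w\in\overline{\B(\ci X)}$ one has $q_w(n)\le|\B(\ci X)\cap A^n|$, whence $h(w)\le h(\ci X)$. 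And for $a$ in the apex, every block $b\in\B(\ci X)$, regarded as a finite word, lies in $\overline{\B(\ci X)}$, so its $\J$-class $J_b$ satisfies $\apex_\pv V(\ci X)\le_\J J_b$; thus $a\le_\J b$, i.e.\ $b$ is a factor of $a$. Hence $F(a)=\B(\ci X)$ and $h(a)=h(\ci X)$.

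Granting the entropy inequality, the subsemigroup claim is immediate. Suppose $u,v\notin\apex_\pv V(\ci X)$ while $uv\in\apex_\pv V(\ci X)$. Then $uv\in\overline{\B(\ci X)}$, and since $u$ and $v$ are factors of $uv$ their finite factors lie in $\B(\ci X)$, so the characterization above gives $u,v\in\overline{\B(\ci X)}$. By Theorem~\ref{t:ALCO-entropy-vs-product_and_iteration}(1), $h(\ci X)=h(uv)=\max\{h(u),h(v)\}$, so $h(u)=h(\ci X)$ or $h(v)=h(\ci X)$. The entropy inequality, in contrapositive form, then forces $u$ or $v$ into $\apex_\pv V(\ci X)$, a contradiction. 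Hence $\Om AV\setminus\apex_\pv V(\ci X)$ is closed under multiplication.

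It therefore remains to prove that $w\in\overline{\B(\ci X)}\setminus\apex_\pv V(\ci X)$ implies $h(w)<h(\ci X)$. The first step is to show that such a $w$ must \emph{omit} a block, i.e.\ $F(w)\subsetneq\B(\ci X)$. Granting this, fix $b\in\B(\ci X)\setminus F(w)$ and let $\ci X_b$ be the subshift of $\ci X$ consisting of the bi-infinite sequences that avoid $b$. As $\ci X$ is sofic, so is $\ci X_b$, and it is a \emph{proper} subshift of the irreducible sofic shift $\ci X$, since some element of $\ci X$ contains $b$. By the result of Lind and Marcus \cite[Corollary~4.4.9]{Lind&Marcus:1996}, $h(\ci X_b)<h(\ci X)$. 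Finally $F(w)$ is contained in the factorial language of $b$-avoiding blocks of $\ci X$, whose exponential growth rate equals $h(\ci X_b)$; hence $h(w)\le h(\ci X_b)<h(\ci X)$, as required.

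The main obstacle is precisely the ``omitted block'' step, and it is here that soficness is indispensable. In $\Om AV$ two elements may share the same finite factors without being $\J$-equivalent---indeed Theorem~\ref{t:ALCO-chains-antichains} exhibits long $<_\J$-chains---so the strict inequality $\apex_\pv V(\ci X)<_\J J_w$ does not by itself yield $F(w)\subsetneq\B(\ci X)$ by order-theoretic means alone. The plan is to argue contrapositively: assuming $F(w)=\B(\ci X)$, the subshift generated by $F(w)$ is all of $\ci X$, and one exploits the structure supplied by the hypothesis---a continuous homomorphism into a finite algebra of $\pv V$ recognizing $\B(\ci X)$, or the openness of multiplication in the case $\pv V=\pv A\malcev\pv V$---to show that an element containing every block as a factor must fall into the $\le_\J$-minimal class, i.e.\ $w\in\apex_\pv V(\ci X)$. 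A secondary point to be verified is that the exponential growth rate of $F(w)$ coincides with the entropy of the subshift it generates, so that non-extendable factors do not inflate $h(w)$; this is routine for the factor language of a pseudoword over a sofic shift, but should be checked.
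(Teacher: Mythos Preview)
Your overall architecture matches the paper: the result is attributed to \cite{ACosta&Steinberg:2011} precisely as ``a reduction to'' the Lind--Marcus inequality for proper subshifts of an irreducible sofic shift, and your derivation of the subsemigroup statement from the entropy inequality via Theorem~\ref{t:ALCO-entropy-vs-product_and_iteration}(1) is correct.

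What you have written, however, is not yet a proof, and you say so yourself. The ``omitted block'' step---that $w\in\overline{\B(\ci X)}\setminus\apex_\pv V(\ci X)$ forces $F(w)\subsetneq\B(\ci X)$---is the entire content of the reduction, and your plan for it (``exploit the structure supplied by the hypothesis'') is a description of where to look, not an argument. This is exactly where soficity does real work, and the mere existence of a recognizing homomorphism, or the openness of multiplication, does not by itself produce the conclusion. What is needed is the existence of \emph{intrinsically synchronizing} words for an irreducible sofic shift: some $m\in\B(\ci X)$ with the property that $um,mv\in\B(\ci X)$ implies $umv\in\B(\ci X)$, together with the fact that every sufficiently long block contains such an $m$. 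Using this, one shows that an infinite $w\in\overline{\B(\ci X)}$ with $F(w)=\B(\ci X)$ actually has an element of the apex as a factor (by gluing through occurrences of $m$), whence $w\le_\J a$ for $a\in\apex_\pv V(\ci X)$ and so $w\in\apex_\pv V(\ci X)$. An equivalent route goes through the syntactic semigroup of $\B(\ci X)$, identifying the apex with the preimage of a particular regular $\J$-class there. Either way, you must supply a concrete construction; the step does not follow from general facts about $\Om AV$, precisely because---as you note---the finite-factor set does not determine the $\J$-class in general.

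Two minor points. Your secondary worry is legitimate but routine: the $b$-avoiding blocks of $\ci X$ form a factorial language that may strictly contain $\B(\ci X_b)$, but a factorial language and the subshift it generates have the same exponential growth rate, so $h(w)\le h(\ci X_b)$ still holds. And note that the strict entropy inequality is vacuous or literally false for periodic sofic shifts (there $h(\ci X)=0$, and any finite $w\in\B(\ci X)$ lies outside the apex with $h(w)=0$); the interesting case is non-periodic $\ci X$, and you should say which convention you adopt.
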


\subsection{Closed subgroups of relatively free profinite semigroups}
\label{sec:ALCO-proj-prof}

Note that maximal subgroups of profinite semigroups are closed. If a
closed subsemigroup of a profinite semigroup is a group then, for the
induced topology, it is a profinite group. This subsection presents
results on the structure of closed subgroups of relatively
free profinite semigroups, with an emphasis on maximal subgroups,
using symbolic dynamics.

We shall see examples of maximal subgroups that are (relatively) free
profinite groups. When $A$ is a finite set and $\pv H$ is a nontrivial
pseudovariety of groups, it is customary to refer to
the cardinal of $A$ as being the \emph{rank}%
\index{rank!of free profinite group}%
\index{free profinite group!rank} %
of $\Om AH$.

A \emph{retract}%
\index{retract}%
\index{semigroup!retract of profinite --} %
of $\Om AS$ is the image of a continuous idempotent endomorphism of
$\Om AS$.
The free profinite subgroups of $\Om AS$ of rank $|A|$ that are retracts of~$\Om AS$ are
characterized in~\cite[Theorem~4.4]{Almeida&Volkov:2002b}. Combining
that characterization with results from~\cite{Almeida:2004a}, leads to
the following theorem.

\begin{theorem}
  \label{t:ALCO-group-generic-retracts}
  For every finite alphabet $A$, there are maximal subgroups $H$
  of\/~$\Om AS$ such that $H$ is a retract of\/ $\Om AS$ and
  a free profinite group of rank $|A|$.
\end{theorem}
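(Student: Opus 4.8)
The plan is to realize the desired subgroup as the maximal subgroup of a carefully chosen $\le_{\J}$-maximal regular $\J$-class of $\Om AS$ and then to exhibit an idempotent continuous endomorphism retracting $\Om AS$ onto it. Since $\pv S$ contains $\pv{LSl}$, Theorem~\ref{t:ALCO-maximal-regular-J-classes} identifies the $\le_{\J}$-maximal regular $\J$-classes of $\Om AS$ with the minimal subshifts of $\shi A$ via $\ci X\mapsto\apex_{\pv S}(\ci X)$. So the strategy splits into two tasks: first, locate a minimal subshift $\ci X$ whose associated maximal subgroup $H$ of $\apex_{\pv S}(\ci X)$ is a free profinite group of rank exactly $|A|$; second, show that this particular $H$ is a retract of $\Om AS$.

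For the first task I would take $\ci X$ to be a minimal subshift of Arnoux--Rauzy type, of the kind produced in~\cite{Almeida:2004a}. The relevant feature is that for such a subshift the return words to a fixed block form a basis of the free group $FG(A)$, so there are exactly $|A|$ of them; transferring this to the profinite setting, the return words determine a distinguished family $\{g_a\}_{a\in A}$ of elements of $H$ that freely generates it as a free profinite group. This is precisely the input I would quote from~\cite{Almeida:2004a}: that $H$ is free profinite of rank $|A|$, together with an explicit free basis indexed by~$A$.

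For the second task I would appeal to the characterization of retracts among rank-$|A|$ free profinite subgroups of $\Om AS$ given in~\cite[Theorem~4.4]{Almeida&Volkov:2002b}. Concretely, let $\psi$ be the unique continuous endomorphism of $\Om AS$ with $\psi(a)=g_a$ (it exists by freeness of $\Om AS$). Its image is the closed subsemigroup generated by the $g_a$; since the $g_a$ lie in the profinite group $H$ and form a basis of it, this closed subsemigroup coincides with $H$. Hence $\psi$ is a retraction onto $H$ exactly when $\psi$ is idempotent, that is, when $\psi$ fixes each $g_a$. The idempotency is where the dynamical input enters: the substitution endomorphism underlying $\ci X$ fixes $\ci X$, so by Theorem~\ref{t:ALCO-End-profinite} its idempotent power in the profinite monoid $\End{\Om AS}$ stabilizes $\apex_{\pv S}(\ci X)$ and acts on $H$; matching this action with the basis $\{g_a\}$ supplies the idempotency of $\psi$, which is the criterion isolated in~\cite[Theorem~4.4]{Almeida&Volkov:2002b}.

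The hard part will be the simultaneous control of three properties of a single object: that $H$ is genuinely a maximal subgroup (a group, rather than a strictly larger retract of $\Om AS$), that its rank is exactly $|A|$, and that the retraction $\psi$ is idempotent with image precisely $H$. The rank computation is the most delicate point, since it rests on the fine combinatorial structure of return words in the subshift and its faithful transfer to the profinite maximal subgroup; one must in particular verify that the free basis can be indexed by $A$ compatibly with the assignment $a\mapsto g_a$. Once the basis is aligned with $A$ and idempotency is confirmed, the three conclusions --- maximal subgroup, retract, and free profinite of rank $|A|$ --- hold for the same $H$, which proves the theorem.
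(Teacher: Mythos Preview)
Your proposal cites exactly the two sources the paper invokes and combines them in the right spirit, but the order in which you assemble the argument leaves a genuine gap at the idempotency step.

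The route actually taken in \cite{Almeida&Volkov:2002b,Almeida:2004a} is the reverse of yours. One starts with a proper primitive substitution $\varphi$ over $A$ whose extension to $FG(A)$ is an automorphism (such substitutions exist for every finite $A$), and takes $\psi=\varphi_{\pv S}^{\omega}$ in the profinite monoid $\End{\Om AS}$. This $\psi$ is idempotent \emph{by construction}, so its image is automatically a retract. One then shows that the image is a maximal subgroup of $\apex_{\pv S}(\ci X_\varphi)$ and that it is free profinite of rank $|A|$; the rank is read off because $\varphi_{\pv G}$ lies in $\Aut{\Om AG}$, whence $\varphi_{\pv G}^{\omega}$ is the identity and the natural projection $\Om AS\to\Om AG$ restricts to an isomorphism on the retract. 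This is the content behind \cite[Corollary~5.7]{Almeida:2004a} mentioned just after Theorem~\ref{t:ALCO-proper-primitive}.

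You instead fix the maximal subgroup $H$ first, choose a basis $\{g_a\}$ via return words, and define $\psi$ by $a\mapsto g_a$. Now idempotency is no longer free: you must show $\psi(g_a)=g_a$, and your justification---``the substitution endomorphism underlying $\ci X$ \ldots\ matching this action with the basis $\{g_a\}$''---does not establish this. A general Arnoux--Rauzy subshift is not substitutive, so there need be no substitution at all; and even in the substitutive case, $\varphi_{\pv S}^{\omega}(a)$ is not a return word, so the idempotent you would obtain from the substitution is not the $\psi$ you defined. The return-word basis does witness that $H$ is free of rank $|A|$, but nothing in your sketch forces the associated endomorphism to be idempotent. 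Swap the order: pick the substitution first, let the retraction be its $\omega$-power, and deduce freeness and maximality afterwards.
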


The maximal subgroups in a $\J$-class of a profinite semigroup are
isomorphic profinite groups
(cf.~\cite[Theorem~A.3.9]{Rhodes&Steinberg:2009qt}). When $\ci X$ is an
irreducible subshift, we may consider the (isomorphism class of the)
maximal profinite subgroup of $\apex_{\pv V}(\ci X)$, denoted $G_{\pv
  V}(\ci X)$. It is invariant under isomorphisms of subshifts, as long
as $\pv V=\pv V\ast\pv D$ and $\pv V$ contains all finite
semilattices~\cite{ACosta:2006}, where \pv D%
\index{$\mathsf{D}$} %
denotes the pseudovariety of all finite semigroups whose idempotents
are right zeros.

Let $\varphi:A^+\to A^+$ be a primitive substitution. The substitution
$\varphi$ is called \emph{periodic}%
\index{substitution!periodic} %
if the associated minimal subshift~$\Cl X_\varphi$ is periodic. If
there are $b,c\in A$ such that $\varphi(a)$ starts with $b$ and ends
with $c$ for every $a\in A$, then the substitution is said to
be~\emph{proper}%
\index{substitution!proper}. %
Denote respectively by $\varphi_{\pv S}$ and by $\varphi_{\pv G}$ the
unique extension of $\varphi$ to a continuous endomorphism of $\Om AS$
and to a (continuous) endomorphism of $\Om AG$. The following is a
result from~\cite{Almeida&ACosta:2013}.

\begin{theorem}\label{t:ALCO-proper-primitive}
  If $\varphi$ is a proper non-periodic primitive substitution over
  $A$, then the retract $\varphi_{\pv S}^\omega(\Om AS)$ is a maximal
  subgroup of $\apex_{\pv S}(\ci X_\varphi)$, which is presented as a
  profinite group by the set of generators $A$ subject to the
  relations of the form $\varphi_{\pv G}^\omega(a)=a$ ($a\in A$).
\end{theorem}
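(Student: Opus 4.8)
The plan is to prove the two assertions separately: first that the retract $R=\varphi_{\pv S}^\omega(\Om AS)$ is a maximal subgroup of $\apex_{\pv S}(\ci X_\varphi)$, and then that, as a profinite group, it admits the stated presentation. I expect essentially all the difficulty to sit in the first assertion, the presentation following by a formal diagram chase once $R$ is known to be a group.

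For the first part I would begin by locating $R$ inside the minimal $\J$-class. Since $\varphi$ is primitive, for large $n$ the word $\varphi^n(a)$ contains every block of $\ci X_\varphi$ as a factor while, conversely, all its factors are blocks of $\ci X_\varphi$; passing to the limit, each generator $\varphi_{\pv S}^\omega(a)$ is an infinite pseudoword whose finite factors are exactly $\B(\ci X_\varphi)$, so by the description of $\overline{\B(\ci X_\varphi)}$ recalled before Proposition~\ref{p:ALCO-order-j} we get $\varphi_{\pv S}^\omega(a)\in\overline{\B(\ci X_\varphi)}$, and one checks that these generators lie in the minimal $\J$-class $\apex_{\pv S}(\ci X_\varphi)$. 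The properness hypothesis is what I would exploit next: because every $\varphi(a)$ begins with the same letter $b$ and ends with the same letter $c$, the pseudowords $\varphi_{\pv S}^\omega(a)$ share a common left-infinite and right-infinite behavior, which --- via the symbolic-dynamical description of the $\ci R$- and $\ci L$-classes inside $\apex_{\pv S}(\ci X_\varphi)$ drawn from \cite{ACosta:2006,Almeida:2004a} --- places them all in a single $\ci H$-class $H_0$. As a closed subsemigroup of the compact semigroup $\Om AS$, the algebra $R$ contains an idempotent $f$, and $f\in H_0$ makes $H_0$ a group; hence $R$, being the closed subsemigroup topologically generated by the $\varphi_{\pv S}^\omega(a)\in H_0$, is a closed subgroup of the profinite group $H_0$. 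To upgrade ``closed subgroup of a group $\ci H$-class'' to ``maximal subgroup'' I would consider the retraction $\varphi_{\pv S}^\omega|_{H_0}\colon H_0\to R$, which fixes $R$ pointwise, and argue that its kernel is trivial, so that $R=H_0$; non-periodicity enters to guarantee that $\ci X_\varphi$ is infinite, so that the subgroup is genuine.

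For the presentation, let $p\colon\Om AS\to\Om AG$ be the canonical continuous projection determined by $a\mapsto a$; it is onto and satisfies $p\circ\varphi_{\pv S}=\varphi_{\pv G}\circ p$, whence $p\circ\varphi_{\pv S}^\omega=\varphi_{\pv G}^\omega\circ p$. Once $R=H_0$ is known to be a profinite group, the universal property of $\Om AG$ yields a continuous homomorphism $\bar q\colon\Om AG\to R$ with $\bar q(a)=\varphi_{\pv S}^\omega(a)$, and uniqueness of continuous extensions gives $\bar q\circ p=\varphi_{\pv S}^\omega$ and $p\circ\bar q=\varphi_{\pv G}^\omega$. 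Using idempotency of $\varphi_{\pv S}^\omega$ together with surjectivity of $p$, one checks $\bar q\circ\varphi_{\pv G}^\omega=\bar q$, so $\bar q$ factors as $\bar q=\bar q'\circ\varphi_{\pv G}^\omega$ through the retract $\varphi_{\pv G}^\omega(\Om AG)$; the identity $p\circ\bar q=\varphi_{\pv G}^\omega$ then forces $p|_R\circ\bar q'=\mathrm{id}$ on $\varphi_{\pv G}^\omega(\Om AG)$, making $\bar q'$ a continuous bijection between compact groups, hence an isomorphism $\varphi_{\pv G}^\omega(\Om AG)\cong R$. It remains to identify $\varphi_{\pv G}^\omega(\Om AG)$ with $\langle A\mid \varphi_{\pv G}^\omega(a)=a\ (a\in A)\rangle$: for an idempotent endomorphism $e$ of a profinite group one has $\Om AG=\ker e\rtimes\mathrm{Im}\,e$, every generator $e(x)x^{-1}$ lies in $\ker e$, and the normal closure of $\{e(a)a^{-1}:a\in A\}$ equals $\ker e$ (since $e$ induces the identity on the quotient by that normal closure, forcing $e(x)x^{-1}$ into it for all $x$); thus the coequalizer of $\mathrm{id}$ and $e$ is exactly $\mathrm{Im}\,e$, which gives the claimed presentation.

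The main obstacle is the maximal-subgroup statement, and within it the two genuinely substitution-theoretic points: that properness forces the $\varphi_{\pv S}^\omega(a)$ into a single $\ci H$-class, and that the retraction $\varphi_{\pv S}^\omega|_{H_0}$ has trivial kernel so that $R$ exhausts that $\ci H$-class. Both rest on the fine description of Green's relations on the minimal $\J$-class attached to a minimal subshift. The presentation, by contrast, is a purely categorical consequence of the intertwining $p\varphi_{\pv S}=\varphi_{\pv G}p$ and the idempotency of $\varphi_{\pv S}^\omega$, and requires no further input once $R$ is a group.
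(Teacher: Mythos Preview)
The paper does not prove this theorem: it is stated as ``a result from~\cite{Almeida&ACosta:2013}'' and no argument is given here. So there is no in-paper proof to compare your proposal against; I can only assess your outline on its own terms and against what is known from the cited source.

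Your treatment of the presentation is essentially complete and correct. The intertwining $p\circ\varphi_{\pv S}=\varphi_{\pv G}\circ p$, together with idempotency of $\varphi_{\pv S}^\omega$, does yield a continuous isomorphism between $R$ and $\varphi_{\pv G}^\omega(\Om AG)$ once $R$ is known to be a group, and the identification of the image of an idempotent endomorphism $e$ of $\Om AG$ with $\langle A\mid e(a)=a\rangle$ via $\ker e$ being the closed normal subgroup generated by $\{e(a)a^{-1}:a\in A\}$ is a clean argument.

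The first part is where the plan remains a plan. Locating the $\varphi_{\pv S}^\omega(a)$ in $\apex_{\pv S}(\ci X_\varphi)$ and, via properness, in a single $\ci H$-class $H_0$ is a reasonable program, and the prefix/suffix description of $\ci R$- and $\ci L$-classes in the minimal $\J$-class (from the cited symbolic-dynamics sources) does support it. The genuine gap is exactly the step you flag yourself: passing from ``$R$ is a closed subgroup of the group $\ci H$-class $H_0$'' to ``$R=H_0$''. Saying you would ``argue that the kernel of $\varphi_{\pv S}^\omega|_{H_0}$ is trivial'' is not yet an argument, and there is no obvious reason a retraction of a profinite group onto a closed subgroup should be injective. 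In~\cite{Almeida&ACosta:2013} this step is not handled by a kernel computation at all; it goes through the combinatorics of \emph{return words} for the minimal subshift $\ci X_\varphi$ and a careful analysis of how $\varphi$ acts on them, which is what ultimately shows that the image exhausts the $\ci H$-class. So your decomposition of the problem is right, and your diagram-chase for the presentation is fine, but the maximal-subgroup assertion needs a substantive ingredient you have not yet supplied.
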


For the general case where $\psi$ is a primitive (not necessarily
proper) non-periodic substitution, one finds in
\cite{Durand&Host&Skau:1999} an algorithm to build a proper primitive
substitution $\varphi$ such that $\Cl X_{\varphi}$ is isomorphic to
$\Cl X_{\psi}$, and so the general case can be reduced to the proper
case via the invariance of the maximal subgroup under isomorphism of
subshifts. An alternative finite presentation for $G_{\pv S}(\ci
X_{\psi})$ as a profinite group is given
in~\cite{Almeida&ACosta:2013}. These results yield that it is
decidable whether a given finite group is a (continuous) homomorphic
image of $G_{\pv S}(\ci X_{\psi})$.

Note that, if in Theorem~\ref{t:ALCO-proper-primitive} the extension
of~$\varphi$ to the free group over $A$ is invertible, then we
immediately get that $G_{\pv S}(\ci X_{\varphi})$ is a free profinite
group of rank $|A|$, which is a particular case
of~\cite[Corollary~5.7]{Almeida:2004a}. On the other hand, it was
proved in~\cite{Almeida&ACosta:2013} that if $\tau$ is
the~\emph{Prouhet-Thue-Morse substitution}%
\index{substitution!proper}, %
that is, the substitution given by $\tau(a)=ab$ and $\tau(b)=ba$, then
$G_{\pv S}(\ci X_{\tau})$ is not a relatively free profinite group.

In~\cite{Almeida&ACosta:2016b}, further knowledge on $G_\pv S(\Cl X)$ was obtained, when $\Cl X$
is minimal, without requiring that $\Cl X$ is defined by a substitution.
Namely, it was shown that $G_\pv S(\Cl X)$ is an inverse limit of profinite
completions of fundamental groups of a special family of finite
graphs that is naturally associated to $\Cl X$. 

For the sofic case, concerning groups of the form $G_{\pv V}(\ci X)$,
we have first to introduce a definition which is similar with the
definition, given in Section~\ref{sec:ALCO-profinite-top-algebras}, of
a free profinite group over a pseudovariety $\pv H$ of groups
(cf.~\cite[Chapter 3]{Ribes&Zalesskii:2000}). A subset $X$ of a
profinite group $G$ is said to~\emph{converge to the identity} if
every neighborhood of the identity element of $G$ contains all but
finitely many elements of $X$. A profinite group $F$ is
a %
\index{basis!converging to the identity} %
\emph{free pro-$\pv H$ group with a
  basis $X$ converging to the identity} if $X$ is a subset of $F$ for
which every mapping $\varphi:X\to G$, with $G$ a pro-$\pv H$ group
such that $\varphi(X)$ converges to the identity, has a unique
extension to a continuous group homomorphism $\hat\varphi:F\to G$. All
bases of $F$ converging to the identity have the same cardinality,
which is called the \emph{rank}%
\index{rank!of free profinite group}%
\index{free profinite group!rank} %
of $F$, and if $F$ and $F'$ have bases converging to the identity with
the same cardinal, then $F$ and $F'$ are isomorphic as profinite
groups. Note that, if $|X|$ is finite, then $F$ is isomorphic
with~$\Om XH$, and so this definition of rank extends the one given
for finitely generated relatively free profinite groups. A free
pro-\pv H group in the former sense is also free pro-\pv H with some
basis converging to one, but the converse is not true; indeed, as
follows from Theorem~\ref{t:ALCO-maximal-subgroup-sofic-case}, for a
nontrivial pseudovariety of groups \pv H, the free pro-\pv H group of
countable rank is metrizable, but \Om AH is not metrizable when $A$ is
infinite (see note following
Proposition~\ref{p:ALCO-free-pro-Q-through-completion}).

For a pseudovariety $\pv H$ of groups, denote
by $\HH$ the pseudovariety of all finite semigroups whose subgroups belong to
$\pv H$. Note that $\pv S=\GG$.
We are now able to cite the
result from~\cite{ACosta&Steinberg:2011}
about maximal subgroups of the from $G_{\HH}(\ci X)$.
Note that the minimal sofic subshifts
are periodic subshifts. 

\begin{theorem}
   \label{t:ALCO-maximal-subgroup-sofic-case}
   Let \pv H be a nontrivial pseudovariety of groups and $\ci X$ an
   irreducible sofic subshift. If $\ci X$ is periodic, then
   $G_{\HH}(\ci X)$ is a free pro-$\pv H$ group of rank $1$. If $\ci
   X$ is non-periodic and $\B(\ci X)$ is $\HH$-recognizable then
   $G_{\HH}(\ci X)$ is a free pro-$\pv H$ group of rank $\aleph_0$,
   provided $\pv H$ is extension-closed and contains nontrivial
   $p$-groups for infinitely many primes $p$.
 \end{theorem}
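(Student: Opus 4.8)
The plan is to treat the two cases separately: the periodic case by a direct monogenic computation, and the non-periodic case through an Iwasawa-type freeness criterion whose embedding-problem hypothesis is verified using the combinatorics of the shift.

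For the periodic case, write $\ci X$ as the orbit of the periodic point $\z w$ for a primitive word $w\in A^+$. Then $\overline{\B(\ci X)}$ is governed by the closed monogenic subsemigroup $\overline{\langle w\rangle}$ of $\Om A\HH$, whose minimum ideal is its unique maximal subgroup containing $w^\omega$. Since the subgroups of members of $\HH$ lie in $\pv H$, this maximal subgroup is the free procyclic pro-$\pv H$ group $\Om{1}{H}$. I would then check, using Proposition~\ref{p:ALCO-order-j} and Theorem~\ref{t:ALCO-maximal-regular-J-classes} together with the monogenic structure, that the $\le_\J$-minimal $\J$-class $\apex_{\HH}(\ci X)$ of factors of $\overline{\B(\ci X)}$ is exactly this group $\J$-class, so that $G_{\HH}(\ci X)\cong\Om{1}{H}$ has rank $1$.

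For the non-periodic case I would reduce the claim to the pro-$\pv H$ analogue of Iwasawa's criterion (cf.~\cite{Ribes&Zalesskii:2000}): a metrizable pro-$\pv H$ group is free pro-$\pv H$ with basis converging to the identity of rank $\aleph_0$ as soon as every finite embedding problem with quotients in $\pv H$ admits a surjective solution. Two of the three ingredients are immediate. The group $G_{\HH}(\ci X)$ is pro-$\pv H$ because it is a closed subgroup of $\Om A\HH$ whose subgroups lie in $\pv H$; and it is metrizable because $A$ is finite, so $\Om A\HH$ is metrizable by Proposition~\ref{p:ALCO-uniformity-vs-ultrametric}, whence its rank is at most $\aleph_0$. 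The heart of the matter is solvability of finite embedding problems, which at the same time forces the rank to be exactly $\aleph_0$, as a finitely generated free pro-$\pv H$ group cannot solve all of them, and the presence of nontrivial $p$-groups for infinitely many primes supplies infinitely many distinct finite quotients.

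To solve an embedding problem given by epimorphisms $\alpha\colon G_{\HH}(\ci X)\to B_0$ and $\beta\colon B\to B_0$ with $\ker\beta,B\in\pv H$, I would realize $\alpha$ through a continuous homomorphism of $\Om A\HH$ onto a finite semigroup $S\in\HH$ recognizing $\B(\ci X)$ (such $S$ exists since $\B(\ci X)$ is $\HH$-recognizable), identifying $B_0$ with the maximal subgroup of the image of $\apex_{\HH}(\ci X)$. I would then build a finer recognizer as a wreath (block) product of $S$ with the kernel group $\ker\beta$, exploiting the unbounded growth of $\B(\ci X)$ for a non-periodic irreducible sofic shift to embed the new coordinates consistently along $\ci X$, so that the maximal subgroup of the image of $\apex_{\HH}(\ci X)$ in the expanded semigroup surjects onto $B$ compatibly with $\beta$. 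Extension-closedness of $\pv H$ keeps this expansion inside $\HH$, while the availability of $p$-groups for infinitely many primes $p$ provides the flexibility to realize kernels of arbitrary prime characteristic at arbitrarily fine scales. The main obstacle is exactly this construction: producing the expansion so that it both realizes the prescribed group extension and respects the dynamics of $\ci X$, that is, controlling the image of $\apex_{\HH}(\ci X)$ under the finer homomorphism. This is where the sofic (finite-presentation) hypothesis, the structure of the Fischer cover of $\ci X$, and the wreath-product/expansion machinery do the real work.
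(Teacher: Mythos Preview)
The paper does not contain a proof of this theorem: it is quoted verbatim as a result from~\cite{ACosta&Steinberg:2011}, and the surrounding text explicitly flags it as one of the results obtained via the expansion and wreath-product techniques of Rhodes and Steinberg rather than via the symbolic-dynamics arguments developed in the chapter. So there is no in-paper proof to compare your proposal against.

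That said, your high-level strategy matches the architecture of the original Costa--Steinberg argument: the periodic case is a direct monogenic computation, and the non-periodic case is an application of the Iwasawa--Mel'nikov criterion for a metrizable pro-$\pv H$ group to be free of rank~$\aleph_0$, with the real content lying in the solvability of finite embedding problems. Your identification of metrizability and the pro-$\pv H$ property is correct and for the right reasons.

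Where your sketch is thin is precisely where you say it is: the construction solving the embedding problem. Saying ``build a finer recognizer as a wreath product of $S$ with $\ker\beta$'' and invoking the Fischer cover names the right ingredients but does not indicate the mechanism. In the actual proof one must show, roughly, that for any onto map from $G_{\HH}(\ci X)$ to $B_0$ and any extension $B\twoheadrightarrow B_0$ in~$\pv H$, one can refine the recognizing homomorphism so that the maximal subgroup sitting over $\apex_{\HH}(\ci X)$ maps onto $B$; this uses the synchronizing structure of the minimal right-resolving presentation of a non-periodic irreducible sofic shift together with wreath-product expansions, and extension-closedness of $\pv H$ is what keeps the expanded recognizer inside~$\HH$. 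Your remark that the infinitely-many-primes hypothesis ``provides flexibility to realize kernels of arbitrary prime characteristic'' is not quite the point: that hypothesis is used to guarantee that the free pro-$\pv H$ group of rank~$\aleph_0$ is not isomorphic to any free pro-$\pv H$ group of finite rank (so that solving all embedding problems really forces infinite rank), not to manufacture kernels during the expansion step. As written, the proposal is a correct outline with the decisive lemma still owed.
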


 Note that Theorem~\ref{t:ALCO-maximal-subgroup-sofic-case} applies to
$\ci X=\shi A$, in which case $\apex_{\HH} (\ci X)$ is the minimum ideal
of $\Om A{\HH}$. This case was previously shown in~\cite{Steinberg:2010ISRJM}.
For further results on the structure of the minimum ideal of $\Om AV$,
where $\pv V$ may be among pseudovarieties other than those in
Theorem~\ref{t:ALCO-maximal-subgroup-sofic-case},
see~\cite{Rhodes&Steinberg:2002,Steinberg:2010ISRJM}.
In contrast with Theorems~\ref{t:ALCO-proper-primitive}
and~\ref{t:ALCO-maximal-subgroup-sofic-case},
the $\ci H$-class of a non-regular element of\/~$\Om AS$ is
a singleton~\cite[Corollary 13.2]{Rhodes&Steinberg:2002}.

While not all closed subgroups of $\Om AS$ are free profinite groups,
they do have a property resembling freeness. A profinite group $G$ is
\emph{projective}%
\index{projective} %
if, for all continuous onto homomorphisms of profinite groups
$\varphi:G\to K$ and $\alpha:H\to K$, there is a continuous
homomorphism $\hat{\varphi}:G\to H$ such that
$\alpha\circ\hat{\varphi}=\varphi$. It is easy to see that all
(finitely generated) projective profinite groups embed into (finitely
generated) free profinite groups~\cite{Ribes&Zalesskii:2000}. The
following converse is much more difficult to prove.

\begin{theorem}[{\cite{Rhodes&Steinberg:2008}}]\label{t:ALCO-closed-subgroups-are-projective}
  Let $\pv V$ be a pseudovariety of semigroups such that
  $(\pv V\cap \pv {Ab})\ast \pv V=\pv V$. Then every closed subgroup
  of a free pro-$\pv V$ semigroup is a projective profinite group.
\end{theorem}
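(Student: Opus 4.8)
The plan is to verify projectivity through the standard characterization by finite embedding problems, reducing these to abelian-kernel problems that the semidirect-product hypothesis can handle. Recall that a profinite group $G$ is projective if and only if every finite embedding problem for $G$ with abelian kernel is weakly solvable; equivalently, $\mathrm{cd}_p(G)\le 1$ for every prime $p$ (see Ribes and Zalesski\u\i~\cite{Ribes&Zalesskii:2000}). Filtering an abelian kernel by its primary components and then by a composition series of $\mathbb F_p[K]$-modules, it suffices to solve, for each prime $p$, every finite embedding problem given by an onto map $\alpha\colon H\to K$ and $\varphi\colon G\to K$ whose kernel $N=\ker\alpha$ is an elementary abelian $p$-group. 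So I fix such a problem with $G$ a closed subgroup of $\Om AV$.

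First I would dispose of the case $\mathbb F_p\notin\pv V$ (that is, $\mathbb Z/p\notin\pv V$). Since $\Om AV$ is residually $\pv V$, restricting continuous homomorphisms $\Om AV\to W$ with $W\in\pv V$ to $G$ shows that $G$ is a pro-$\pv V$ group, so every finite continuous quotient of $G$ lies in $\pv V$. As $\pv V$ is closed under subgroups, $\mathbb Z/p\notin\pv V$ forces every group in $\pv V$ to have order prime to $p$; hence the $p$-Sylow subgroup of $G$ is trivial, $H^2(G,N)=0$, and the problem is solvable for free. Thus the only genuine case is $\mathbb Z/p\in\pv V$, in which $N\in\pv V\cap\pv{Ab}$.

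Next I would push $\varphi$ out to the ambient free semigroup. Because the cosets of the open subgroup $\ker\varphi$ are clopen in $G$, and clopen subsets of a pro-$\pv V$ algebra are exactly the $\pv V$-recognizable ones (Proposition~\ref{p:ALCO-recognizable-vs-clopen-in-profinite}), a compactness argument yields a single continuous homomorphism $\Phi\colon\Om AV\to T$ into some $T\in\pv V$ whose restriction to $G$ factors $\varphi$; after replacing $H$ by its pullback along $\Phi(G)\to K$ and renaming, I may assume $\Phi|_G=\varphi$ with image a subgroup $K\le T$. The heart of the proof is then to build a finite semigroup $\tilde T\in\pv V$ with a surjection $\pi\colon\tilde T\to T$ such that $\pi^{-1}(K)$ is a group admitting a group surjection $\rho\colon\pi^{-1}(K)\to H$ over $K$. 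I would do this by a coinduction (Shapiro-type) construction: take $M=\mathrm{Hom}_{\mathbb F_p K}(\mathbb F_p T^1,N)$, a finite abelian $p$-group lying in $\pv V\cap\pv{Ab}$ and carrying a $T$-action, and form the (possibly cocycle-twisted) semidirect product $\tilde T=M\rtimes T$. The hypothesis $(\pv V\cap\pv{Ab})\ast\pv V=\pv V$ is exactly what guarantees $\tilde T\in\pv V$. Over $K$ one has $\pi^{-1}(K)=M\rtimes K$, a group, and the counit $M\to N$ (evaluation at $1$) is $K$-equivariant and onto, inducing $\rho$ onto $H$.

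Finally I would lift and restrict. Since $\Om AV$ is free pro-$\pv V$ over $A$ and $\pi$ is onto, choosing for each generator $a\in A$ a preimage of $\Phi(a)$ extends to a unique continuous homomorphism $\tilde\Phi\colon\Om AV\to\tilde T$ with $\pi\tilde\Phi=\Phi$ (two continuous homomorphisms agreeing on a generating set coincide). Then $\tilde\Phi(G)\subseteq\pi^{-1}(K)$, so $\hat\varphi:=\rho\circ\tilde\Phi|_G\colon G\to H$ is a continuous homomorphism with $\alpha\hat\varphi=\pi\tilde\Phi|_G=\varphi$, solving the embedding problem. I expect the real difficulty to be the coinduction step: making the monoid-theoretic analogue of Shapiro's lemma precise, so that the coinduced abelian-kernel extension $\tilde T\to T$ genuinely restricts over the subgroup $K$ to an extension mapping onto the \emph{given} $H$ (not merely onto the split extension $N\rtimes K$), while keeping $\tilde T$ in the semidirect form to which the hypothesis applies. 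Everything else---the cohomological reductions, the trivial-Sylow case, the extension of $\varphi$ across $\Om AV$, and the relative projectivity of the free pro-$\pv V$ semigroup---is routine given the results quoted above.
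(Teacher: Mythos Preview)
The paper does not prove this theorem at all: it is stated with attribution to Rhodes and Steinberg~\cite{Rhodes&Steinberg:2008} and the surrounding text explicitly says it ``is much more difficult to prove.'' So there is no in-paper argument to compare against; I can only assess your outline on its own and against what the cited source actually does.

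Your reduction to finite embedding problems with elementary abelian $p$-kernel is standard and correct, and the disposal of the case $\mathbb{Z}/p\notin\pv V$ is fine. The genuine gap is exactly where you yourself locate it, and it is fatal as written. If $\tilde T=M\rtimes T$ is an honest (split) semidirect product, then $\pi^{-1}(K)=M\rtimes K$ is a \emph{split} extension of $K$ by $M$; every quotient of $M\rtimes K$ over $K$ is of the form $(M/M')\rtimes K$ for a $K$-submodule $M'$, hence again split. Thus your $\rho$ can only reach split extensions $N\rtimes K$, never a genuinely non-split $H$, and projectivity is precisely the assertion that the non-split case cannot occur. Invoking a ``cocycle-twisted'' semidirect product does not help: such an object is an extension, not a semidirect product, and the hypothesis $(\pv V\cap\pv{Ab})\ast\pv V=\pv V$ speaks only of semidirect products of pseudovarieties, so you lose the guarantee $\tilde T\in\pv V$. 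The Shapiro-type coinduction you sketch would, in the group setting, translate a class in $H^2(K,N)$ to one in $H^2(T^1,M)$, i.e.\ to a \emph{non-split} extension of $T^1$; that is the opposite of what you need.

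For orientation, the Rhodes--Steinberg argument does not attempt to realise a prescribed non-split $H$ inside a single semidirect extension of $T$. Their route goes through the Sch\"utzenberger representation of the maximal subgroup containing $G$ and iterated wreath-product decompositions, using the hypothesis to keep the successive wreath products in $\pv V$; projectivity is then obtained for the maximal subgroup (and hence for every closed subgroup of it). If you want to rescue the cohomological strategy, you would need a semigroup-theoretic device that manufactures, inside $\pv V$, an object whose restriction over $K$ hits the given $2$-cocycle, and the bare semidirect-product closure hypothesis does not obviously provide one.
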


The definition of projective profinite group can be
considered for other algebras.
The~\emph{projective profinite semigroups}%
\index{semigroup!projective profinite --} %
embed into free profinite semigroups
but, in contrast with Theorem~\ref{t:ALCO-closed-subgroups-are-projective},
there are finite subsemigroups of $\Om AS$ that are not projective.
For further details about finite subsemigroups
of $\Om AS$ (and $\Om AV$ for other $\pv V$), and their
interplay with projective profinite semigroups,
see~\cite[Remark 4.1.34]{Rhodes&Steinberg:2009qt}.

\bibliographystyle{abbrv}
\addcontentsline{toc}{section}{References}
\begin{footnotesize}
  \bibliography{abbrevs,profinite-topologies}
\end{footnotesize}

\end{document}